\newtheoremstyle{slplain}
  {.4\baselineskip\@plus.1\baselineskip\@minus.1\baselineskip}
  {.3\baselineskip\@plus.1\baselineskip\@minus.1\baselineskip}
  {\itshape}
  {}
  {\bfseries}
  {.}
  { }
  {}
\theoremstyle{slplain} 
\newtheorem*{definition*}{Definition}
\newtheorem*{theorem*}{Theorem}
\newtheorem{theorem}{Theorem}[section]
\newtheorem{lemma}[theorem]{Lemma}
\newtheorem{claim}[theorem]{Claim}
\newtheorem{corollary}[theorem]{Corollary}
\newtheorem{definition}[theorem]{Definition}
\newtheorem*{rep@theorem}{\rep@title}
\newcommand{\newreptheorem}[2]{%
\newenvironment{rep#1}[1]{%
 \def\rep@title{#2 \ref{##1}}%
 \begin{rep@theorem}}%
 {\end{rep@theorem}}}
\theoremstyle{definition}
\newtheorem{remark}[theorem]{Remark}
\theoremstyle{plain} 
\numberwithin{equation}{section}
\newtheoremstyle{etplain}
  {.0\baselineskip\@plus.1\baselineskip\@minus.1\baselineskip}
  {.0\baselineskip\@plus.1\baselineskip\@minus.1\baselineskip}
  {\itshape}
  {}
  {\bfseries}
  {.}
  { }
  {}
\newcommand{\R}{\mathbb{R}}
\newcommand\mc[1]{\mathcal{#1}}
\renewcommand\bar\overline
\DeclareMathOperator*{\argmin}{arg\,min}
\newcolumntype{C}[1]{>{\centering\let\newline\\\arraybackslash\hspace{0pt}}m{#1}}
\DeclareMathOperator{\E}{\mathbb{E}}
\DeclareMathOperator{\trace}{tr}
\newcommand{\calE}{\ensuremath{\mathcal{E}}}
\newcommand{\calF}{\ensuremath{\mathcal{F}}}
\newcommand{\calK}{\ensuremath{\mathcal{K}}}
\newcommand{\calN}{\ensuremath{\mathcal{N}}}
\newcommand{\calO}{\ensuremath{\mathcal{O}}}
\newcommand{\calX}{\ensuremath{\mathcal{X}}}
\newcommand{\bbP}{\ensuremath{\mathbb{P}}}
\newcommand{\bbR}{\ensuremath{\mathbb{R}}}
\newcommand{\bbS}{\ensuremath{\mathbb{S}}}
\newcommand{\bbZ}{\ensuremath{\mathbb{Z}}}
\def\nd/{\textsuperscript{nd}}
\def\rd/{\textsuperscript{rd}}
\def\th/{\textsuperscript{th}}
\def\nnil{\nil}
\newcounter{prob}
\newcounter{dual}
\newenvironment{prob*}{%
	\csname equation*\endcsname%
	\aligned%
}{%
	\endaligned%
	\csname endequation*\endcsname%
}
\title{Model-Free Learning for the Linear Quadratic Regulator over Rate-Limited Channels}
\author{Lintao Ye$^\dagger$, Aritra Mitra$^\dagger$, and Vijay Gupta
\thanks{The first two authors contributed equally to this work. L. Ye is with the School of Artificial Intelligence and Automation, Huazhong University of Science and Technology Email: {\tt yelintao93@hust.edu.cn}. A. Mitra is with the Department of Electrical and Computer Engineering,  North Carolina State University Email: {\tt amitra2@ncsu.edu}. V. Gupta is with The Elmore Family School of Electrical and Computer Engineering, Purdue University Email: {\tt gupta869@purdue.edu}. An initial version of this paper was presented at the L4DC 2024 conference \cite{mitra2024towards}.}} 
\date{\today}
\begin{document}
\maketitle
\begin{abstract}
Consider a linear quadratic regulator (LQR) problem being solved in a model-free manner using the policy gradient approach. If the gradient of the quadratic cost is being transmitted across a rate-limited channel, both the convergence and the rate of convergence of the resulting controller may be affected by the bit-rate permitted by the channel. We first pose this problem in a communication-constrained optimization framework and propose a new adaptive quantization algorithm titled Adaptively Quantized Gradient Descent (\texttt{AQGD}). This algorithm guarantees exponentially fast convergence to the globally optimal policy, with \textit{no deterioration of the exponent relative to the unquantized setting}, above a certain finite threshold bit-rate allowed by the communication channel. We then propose a variant of \texttt{AQGD} that provides similar performance guarantees when applied to solve the model-free LQR problem. Our approach reveals the benefits of adaptive quantization in preserving fast linear convergence rates, and, as such, may be of independent interest to the literature on compressed optimization. Our work also marks a first step towards a more general bridge between the fields of  model-free control design and networked control systems.
\end{abstract}

\section{Introduction}
\label{sec:Intro}
Reinforcement learning (RL) to solve classical control problems such as the linear quadratic regulator (LQR), the $\mathcal{H}_{2}$ or the $\mathcal{H}_{\infty}$ control problems, is now well-understood. Both the convergence properties of various algorithms and characterization of the convergence rates through a non-asymptotic analysis have been considered. For model-based RL, which requires the construction of an empirical model of the process, we can point to works such as~\cite{tsiamis,ye2022sample,mania2019certainty,simchowitz2020naive}. For model-free approaches that do not involve the construction of such a model, example works include~\cite{hu,zhang2021policy,zhao2023global}. In this paper, we are interested in a popular model-free algorithm -- policy gradient -- applied to the linear quadratic regulator (LQR) problem~\cite{anderson}. This algorithm has been studied extensively in the LQR context. The authors in~\cite{fazel} showed that despite the non-convexity of the optimization landscape, 
 model-free policy gradient algorithms can guarantee convergence to the globally optimal policy. Furthermore, given access to exact policy gradients, this convergence is exponentially fast.

We consider the performance of policy gradient algorithms being used to solve an LQR problem when a rate-limited channel is present in the loop. The problem of robustness of policy gradient algorithms (or reinforcement learning (RL) algorithms more generally) to communication-induced distortions introduced if the transmission of either the gradient or the policy occurs over realistic communication channels has received only limited attention; see, for instance, the work on event-triggered RL in~\cite{chen2021, gatsis2022}, on rate-limited and noisy channels for stochastic bandit problems in~\cite{hanna, pase, mitra2023linear}, on policy evaluation using compressed temporal difference learning in~\cite{TDEF,dalFedTD}), and more recently, on the effect of delays in stochastic approximation~\cite{adibi2024stochastic} and policy gradient algorithms for the LQR problem in~\cite{sha2024asynchronous, toso2024asynchronous}. On the other hand, the impact of realistic communication channels present in a control loop has been studied extensively under the rubric of networked control systems. The simplest formulation in this area considers a plant being controlled by a remote controller that either receives measurements from a sensor across a communication channel or transmits the control input to an actuator over a channel. The setting that we consider, when the gradient of the cost function is transmitted across a communication channel, corresponds to the channel between the sensor and the controller. In this setting, the abstraction of the communication channel as a rate-limited channel or a `bit pipe' that can transmit a finite number of bits per channel use has been studied for at least two decades now. The so-called data-rate theorem states that if the open loop plant is a linear time-invariant (LTI) system, there is a minimal bit-rate - characterized by the sum of the logarithms of the magnitudes of the open loop unstable eigenvalues of the plant - that must be supported by the channel in order for an encoder-decoder and a controller to exist such that the plant can be stabilized~\cite{tatikonda,nair2004stabilizability}. This result has been extended in various directions, including consideration of stochastic or linear switched plants, combining the channel model with other effects such as a packet drop, consideration of event-triggered communication schemes, and so on. We point the reader to works such as~\cite{nair2007feedback,minero2009data,tallapragada2015event,martins2006finite} and the references therein.
 
However, classically, this line of work assumes and exploits the knowledge of the model of the plant being controlled in the encoder-decoder and the controller design. Some limited work~\cite{okano2014minimum} has been done to relax this assumption; however, there is little to no theoretical understanding of networked control systems in the absence of such an assumption. As RL-based control becomes more popular and well-understood, it becomes natural to seek to understand how to remove this assumption. From the point of view of networked control, this would require the design of encoders and decoders for the communication channels and that of the controllers when the plant model is either simultaneously being learned (in model-based RL) or is never explicitly constructed. From the point of view of RL-control, this would require the understanding of the impact of communication channel effects on traditional RL algorithms, and the design of measures to counteract this impact.  In this paper, we take the first step in this direction. While the modeling choices we make (LTI plant, rate-limited channel, and policy gradient based controller design) may seem specific, they already begin to show some of the intricacies that arise when we seek to bridge these two areas. The ultimate goal of this work is to connect control, communication, and learning by initiating a study of model-free control under communication constraints.  

\begin{figure}[t]
\centering
  \includegraphics[width=0.5\linewidth]{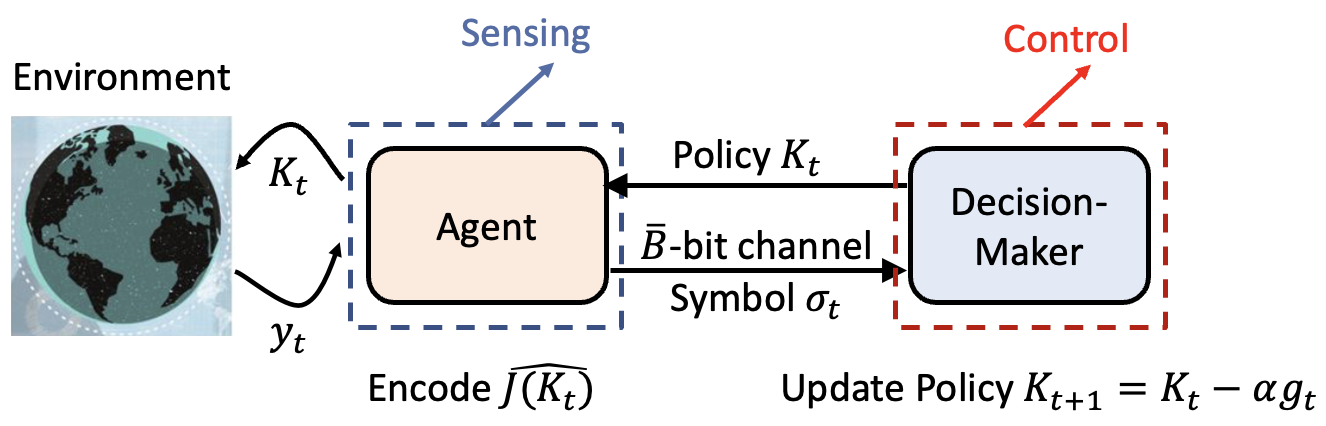}
  \caption{Communication-constrained policy optimization for model-free LQR. At each iteration $t$, the decision-maker sends the current policy $K_t$ to an agent over a noiseless channel of infinite capacity. The agent evaluates and encodes the noisy policy gradient $\widehat{\nabla J(K_t)}$  using $\bar{B}$ bits, and transmits the encoded symbol $\sigma_t$ to the decision-maker over a noiseless rate-limited channel. The decision-maker updates the policy based on the decoded policy gradient $g_t$.}
  \label{fig:Setup}
\end{figure} 

Specifically, in this paper, we consider the setting\footnote{A more careful problem formulation is provided in Section~\ref{sec:LQR setup}, where the notation in the figure is also formally defined.} shown in Figure~\ref{fig:Setup}. Consider a remote agent that observes a plant and transmits information to a controller server across a noiseless channel. The channel is abstracted as a bit-pipe, meaning that $\bar{B}$ bits can be transmitted in a noiseless fashion across it per channel use. The agent has two functions. First, it executes the control policy relayed to it by the server. Notice that the control policy is not communicated across a bit-constrained communication channel to the agent. Such presence of a communication channel only in one of the sensor-controller or controller-actuator paths is standard in networked control systems literature and can model the case where, e.g., the controller has ample transmission power while the sensor is limited by battery constraints. Second, the agent observes the performance of the plant when this control policy is implemented, calculates the observed noisy gradient of the cost function being optimized based on the system trajectory data, and transmits an encoded version of the gradient using $\bar{B}$ bits across the bit-constrained communication channel. The server receives this encoded gradient, decodes it, updates the control policy through a policy gradient type algorithm, and communicates this updated policy to the agent. We assume that the plant being controlled is an \textit{unknown} LTI system and the cost that is sought to be minimized is a quadratic cost. Thus, the collective objective of the agent and the server is to solve an LQR problem based on a model-free RL algorithm.

We comment here that the classical results in networked control systems literature in such a setup seek merely to stabilize the closed-loop system (e.g.,~\cite{tatikonda}), and the optimal control design in the sense of LQR design is an open problem~\cite{nair2007feedback}. Further, we emphasize that while the version considered here is only a first step towards realizing a full theory of model-free control/reinforcement learning in multi-agent networked systems, where communication plays a key role~\cite{linMARL, shinMARL, wang2023model}, it is interesting in its own right parallel to the early developments of networked control systems literature. With such a setup, we seek to design the encoder, decoder, and RL algorithm that can guarantee asymptotic convergence to the globally optimal policy and to understand the loss in performance if that is not possible. In addition, we seek to understand the finite-sample performance of the design in terms of convergence rate, with a particular eye on characterizing any loss in the rate of convergence with a finite value of $\bar{B}$ relative to when the channel has infinite capacity (i.e., when $\bar{B}=\infty$). We summarize our contributions as follows. 

$\bullet$ {\bf Novel Quantized Gradient-Descent Scheme with Linear Convergence Rate.} In Section~\ref{sec:AQGD}, we show that the problem considered is related to the literature on the general problem of quantization in optimization~\cite{gandikota, kostina, mayekar} using a single-worker single-server framework (see Fig.~\ref{fig:Model}). Using this connection, we design a new quantized gradient descent algorithm that carefully exploits the smoothness properties of the objective function and utilizes the allowed bits better by encoding the change in the gradient, as opposed to the gradient itself. We first present the \texttt{Adaptively Quantized Gradient Descent} (\texttt{AQGD}) algorithm for optimization of general smooth and strongly-convex loss functions. For pedagogical ease, we begin with the simpler case when the gradients can be calculated by the agent exactly and show in Theorem~\ref{thm:AQGD} that for globally smooth and strongly convex objective functions, \texttt{AQGD} guarantees exponentially fast convergence to the optimal solution. Interestingly, and perhaps counter-intuitively, the convergence rate is not hurt above a minimal value for $\bar{B}$, in the sense that the exponent of convergence is \textit{exactly} the same as that of unquantized gradient descent. Moreover, the minimal value of $\bar{B}$ that we identify matches (up to a universal constant) with that identified in a converse result~\cite{kostina} as being necessary to achieve the same rate as that of unquantized gradient descent. To extend this result for the LQR problem, which is neither strongly-convex nor globally smooth, in Theorem~\ref{thm:PL}, we prove that Theorem~\ref{thm:AQGD} is also valid under the weaker assumption of gradient-domination. This part of the paper was presented in a preliminary form at the L4DC 2024 conference \cite{mitra2024towards}.

$\bullet$ {\bf Local Assumptions with Noisy Gradients.} We next consider the case when the gradient is noisy (i.e., the gradient calculation at the agent is based on the observed system trajectories), and the objective function only possesses a local smoothness property. An additional challenge for the LQR problem is that the control policy that is generated must remain within the set of stabilizing policies. Towards this end, in Section~\ref{sec:noisy AQDG}, we introduce a variant of \texttt{AQGD} (termed as \texttt{NAQGD}) for optimization of general functions that are locally smooth and gradient dominant and the algorithm only uses noisy gradients. We prove in Theorem~\ref{thm:noisy gradient} that \texttt{NAQGD} converges exponentially fast to a neighborhood of the optimal policy whose range is characterized by the noise level of the noisy gradient while generating a sequence of stabilizing policies. The \texttt{NAQGD} algorithm achieves the same convergence performance as unquantized gradient descent with noisy gradients (under the locally smooth and gradient-dominant assumptions), as studied in \cite{cassel2021online}. Our proofs in Sections~\ref{sec:AQGD}-\ref{sec:noisy AQDG} rely on the construction of novel Lyapunov functions that simultaneously account for the optimization error, the error introduced by quantization and the possibly noisy gradients in our algorithm. 

$\bullet$ {\bf Application to Model-Free LQR and Sample Complexity Results.} In Section~\ref{sec:PG model free LQR}, we provide a method to compute a noisy gradient of the LQR objective function for a given policy based solely on trajectories collected from the target system. Based on this method, we show in Theorem~\ref{thm:noisy gradient for LQR} that \texttt{NAQGD} can be applied to the model-free LQR setting with the same convergence performance as Theorem~\ref{thm:noisy gradient}. Further, we provide in Corollary~\ref{coro:sample complexity} a finite-sample analysis of \texttt{NAQGD} that characterizes the number of data samples (on the collected trajectories) from the target system required to achieve a certain convergence performance of the algorithm.

Given that we relate our problem to one of optimization over rate limited channels, we would also like to comment on the literature in that area. Despite the rich literature that has emerged on the topic of communication-constrained optimization~\cite{bernstein, reddystich, stichsparse, gandikota, mayekar, saha, EF1}, the only paper we are aware of that manages to preserve fast linear rates (despite quantization) is the recent work by~\cite{kostina}. Our work differs from that of~\cite{kostina} both algorithmically, and also in terms of results: unlike the algorithm proposed in~\cite{kostina}, our proposed \texttt{AQGD} algorithm does not require maintaining any auxiliary sequence. Instead, it relies on carefully exploiting smoothness of the loss function to encode innovation signals. In addition to being conceptually simpler, \texttt{AQGD} preserves rates under weaker assumptions of local smoothness and gradient-domination. Moreover, while we also consider scenarios with noisy gradients, it is unclear whether the algorithm in~\cite{kostina} is applicable beyond the setting where exact noiseless gradients are available.  As such, our proposed technique might be of independent interest to the literature on compressed optimization. For a summary of the state-of-the-art in this area, we refer the interested reader to~\cite{EF2}.

{\bf Notation.} Let $[n]=\{1,\dots,n\}$ for $n\in\bbZ_{\ge1}$. Let $\mathbb{S}_{++}^n$ denote the set of all positive definite $n\times n$ matrices. For a vector $x\in\R^n$, let $\Vert x\Vert$ be its Euclidean norm. For a matrix $P\in\R^{n\times m}$, let $\Vert P\Vert$ and $\Vert P\Vert_F$ be its spectral norm and Frobenius norm, respectively. Let $I_n$ be the $n$ by $n$ identity matrix. Let $\mathcal{B}_d(0,R)$ denote the $d$-dimensional Euclidean ball of radius $R$ centered at the origin.

\section{Problem Formulation and Preliminaries}
\label{sec:LQR setup}
Consider a linear time-invariant (LTI) system given by 
\begin{equation}
x_{k+1} = Ax_k+Bu_k+w_k,
\label{eqn:LTI}
\end{equation}
where $A\in\mathbb{R}^{n\times n}$ and $B\in\mathbb{R}^{n\times m}$ are system matrices, $x_k\in\mathbb{R}^{n}$ is the state, $u_k\in\mathbb{R}^{m}$ is the control input and $w_0,w_1,\dots$ are i.i.d. Gaussian disturbances with zero mean and covariance $\Sigma_w\in\mathbb{S}_{++}^n$, i.e., $w_k\overset{i.i.d.}{\sim}\calN(0,\Sigma_w)$. We assume without loss of generality that $x_0=0$. The goal of the LQR problem is to compute the control inputs $u_0,u_1,\dots,$ that solve 
\begin{equation}\label{eqn:LQR obj}
\min_{u_0,u_1,\dots}\lim_{N\to\infty}\frac{1}{N}\E\Big[\sum_{k=0}^{N-1}x_k^{\top}Qx_k+u_k^{\top}Ru_k\Big],
\end{equation}
where $Q\in\mathbb{S}_{++}^n$ and $R\in\mathbb{S}_{++}^m$ are cost matrices, and the expectation is taken with respect to the disturbance process $\{w_k\}_{k\ge0}$. A classic result in optimal control yields that the optimal solution to the LQR problem in \eqref{eqn:LQR obj} is given by the static state-feedback control policy $u_t=Kx_t$ for some controller $K\in\mathbb{R}^{m\times n}$ (see, e.g., \cite{bertsekas2015dynamic}). Hence, solving \eqref{eqn:LQR obj} is equivalent to solving the following: 
\begin{equation}\label{eqn:LQR obj J(K)}
\min_{K\in\mathbb{R}^{m\times n}}J(K)\triangleq\lim_{N\to\infty}\frac{1}{N}\E\Big[\sum_{k=0}^{N-1}x_k^{\top}(Q+K^{\top}RK)x_k\Big].
\end{equation}
Additionally, another well-known result is that the cost function $J(K)$ is finite if and only if $K$ is stabilizing, i.e., the matrix $(A+BK)$ is Schur-stable \cite{bertsekas2015dynamic}. It follows that one can solve \eqref{eqn:LQR obj J(K)} by minimizing $J(K)$ over the set of stabilizing $K$. For any stabilizing $K$, we also know from \cite{bertsekas2015dynamic} the cost $J(K)$ yields the following closed-form expressions :
\begin{equation}\label{eqn:expression for J(K)}
J(K) = \textrm{trace}(P_K\Sigma_w)= \textrm{trace}\big((Q+K^{\top}RK)\Sigma_K\big),
\end{equation}
where $P_K\in\bbS_{++}^n$ is the solution to the following Riccati equations:
\begin{equation}
P_K = Q+K^{\top}RK+(A+BK)^{\top}P_K(A+BK), \hspace{1mm} 
\Sigma_K = \Sigma_w+(A+BK)\Sigma_K(A+BK)^{\top}.
\label{eqn:DARE}
\end{equation}

Based on the above arguments, one can use the so-called policy gradient method to obtain $K^*=\argmin_{K}J(K)$ (see, e.g., \cite{maartensson2009gradient,malik2020derivative,hu,bu2020topological,fatkhullin2021optimizing}). The basic scheme is to initialize with an arbitrary stabilizing $K_0$ and iteratively perform updates of the form: $K_{t+1}=K_{t}-\alpha \nabla J(K_t)$ for $t=0,1,\dots$, where $\alpha\in\mathbb{R}_{>0}$ is a suitably chosen step size. When the system matrices $A,B$ (and the cost matrices $Q,R$) are known, the exact gradient $\nabla J(K_t)$ of $J(K_t)$ can be computed and the algorithm described above converges exponentially fast to the optimal policy $K^*$, even though the cost $J(K)$ is not strictly convex \cite{fazel,malik2020derivative}. Further, even when the system model $A,B$ is unknown, for a given stabilizing $K$, one can compute a noisy version of $\nabla J(K)$, denoted as $\widehat{\nabla J(K)}$, based on observed system trajectories of \eqref{eqn:LTI} obtained by applying the control policy $u_t=Kx_t$; hence the name {\it model-free} learning for LQR \cite{fazel}. The policy gradient update now takes the form $K_{t+1}=K_t-\alpha\widehat{\nabla J(K_t)}$, whose convergence performance is characterized by the distance $\Vert \nabla J(K_t)-\widehat{\nabla J(K_t)}\Vert_F$ \cite{fazel,cassel2021online}.

\textbf{Objective.} We aim to study model-free learning for the LQR problem under communication constraints on the policy gradient updates (see the setup in Fig.~\ref{fig:Setup}). At each policy gradient update step $t$, the worker agent computes a potentially noisy policy gradient $\widehat{\nabla J(K_t)}$ by interacting with the environment and collecting system trajectories under the policy $K_t$. It then sends an encoded (i.e., quantized) version of $\widehat{\nabla J(K_t)}$ to the decision maker (or server) through a channel that supports noise-free transmission of a finite number of bits $\bar{B}$ per use of the channel. The server updates the policy to $K_{t+1}$ based on the decoded gradient $g_t$. Our goal is to propose a quantized policy gradient algorithm that specifies the encoding scheme at the worker and the policy update rule at the decision-maker, and characterize the convergence performance of the proposed algorithm to the optimal solution $K^*$. Moreover, we seek to analyze the convergence rate of the proposed algorithm and relate it to the capacity $\bar{B}$ of the channel. The major challenge here lies in the fact that the channel distorts the policy gradients $\widehat{\nabla J(K_t)}$, which makes it difficult to directly adapt the analysis in the literature on policy gradients for model-free LQR (e.g., \cite{fazel,malik2020derivative,cassel2021online}) to our setting. Thus, \emph{it is a priori unclear whether the sequence of policies generated using such distorted policy gradients converge to $K^*$ or even remain stabilizing.} 

{\bf Outline for the Rest of the Paper.} To solve the problem described above, we take several steps. In Section~\ref{sec:AQGD}, we first connect our problem to a general communication-constrained optimization problem setup (depicted by Fig.~\ref{fig:Model}) that has been studied in \cite{gandikota, mayekar, kostina}. For this setting, we propose the \texttt{AQGD} algorithm and show that it manages to preserve convergence rates despite quantization, provided the channel capacity exceeds a certain finite threshold and one has access to exact noiseless gradients. Building on these developments, in Section~\ref{sec:noisy AQDG}, we introduce a variant of \texttt{AQGD} (termed \texttt{NAQGD}) that can handle perturbation errors on the gradients - as needed to capture the effect of noisy policy gradients. Finally, in Section~\ref{sec:PG model free LQR}, we show how \texttt{NAQGD} addresses the model-free LQR problem under bit constraints, and provide a finite-sample analysis that relates the number of data samples used for obtaining the noisy gradients to the convergence performance of \texttt{NAQGD}. 

\section{Adaptively Quantized Gradient Descent (\texttt{AQGD}) for Communication-Constrained Optimization}\label{sec:AQGD}
\begin{figure}[t]
\centering 
\includegraphics[width=0.5\linewidth]{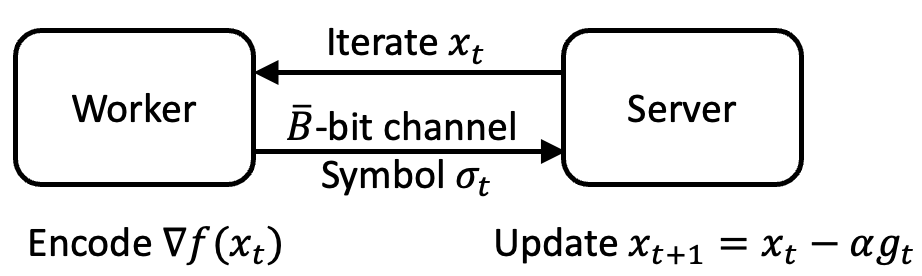}
\caption{Communication-constrained optimization setup with exact gradients at the worker, where $g_t$ represents the decoded gradient at the server.} 
\label{fig:Model}
\end{figure}

As we mentioned before, our problem is related to the general communication-constrained optimization framework in Fig.~\ref{fig:Model} that has been studied in various recent works~\cite{gandikota, mayekar, kostina, saha}. By assuming that the worker receives an {\it exact} gradient $\nabla f(x_t)$ of the objective function in each iteration $t$, we will first develop a method to tackle the gradient distortion (or error) introduced solely by the communication channel. The method developed in this section paves the way for our analysis later in Section~\ref{sec:noisy AQDG} that works for the case when the worker receives a noisy gradient $\widehat{\nabla f(x_t)}$, where we need to further consider the gradient error incurred due to noisy gradient evaluations. To proceed, we introduce the following standard definitions.

\begin{definition}\label{def:smooth}\textbf{(Smoothness)} A continuously differentiable function $f:\mathbb{R}^{d}\rightarrow \mathbb{R}$ is $L$-smooth if the gradient map $\nabla f:\mathbb{R}^{d}\rightarrow \mathbb{R}^d$ is $L$-Lipschitz, i.e.,
$$ \Vert \nabla f(x) - \nabla f(y) \Vert \leq L \Vert x-y \Vert, \forall x,y \in \mathbb{R}^d.$$
\end{definition}

\begin{definition} \textbf{(Strong-Convexity)} A differentiable function $f:\mathbb{R}^{d}\rightarrow \mathbb{R}$ is $\mu$-strongly convex if, given any $x,y\in\mathbb{R}^{d}$, the following inequality holds: 
$$
    f(y) - f(x) \geq\langle y-x, \nabla f(x) \rangle +\frac{\mu}{2}{\Vert y-x \Vert}^2.
$$
\end{definition}

Consider the setup in Fig.~\ref{fig:Model} with the objective $\min_{x\in\R^d}f(x)$, where $f:\mathbb{R}^d \to \mathbb{R}$. Under the assumptions that $f(\cdot)$ is globally $L$-smooth and $\mu$-strongly convex, it is well-known~\cite{nesterovlectures, bubeck2015convex} that when the channel from the worker to the server has infinite capacity, i.e., when $\bar{B}=\infty$, the vanilla gradient-descent algorithm $x_{t+1}=x_t-\alpha \nabla f(x_t)$ with the constant step-size $\alpha=1/L$ yields a so-called linear convergence guarantee for all $t\ge0$ given by
\begin{equation}
f(x_t)-f(x^*) \leq \left(1-\frac{1}{\kappa}\right)^t \left( f(x_0)-f(x^*) \right),
\label{eqn:basicGD}
\end{equation}
where $x^*=\argmin_{x\in\R^d}f(x)$ is the unique global minimizer and $\kappa=L/\mu$ is the condition number of $f(\cdot)$. In other words, $f(x_t)$ converges exponentially fast to the optimal solution at the rate $O(\exp(-t/\kappa))$.

When the channel only supports a finite number of bits $\bar{B}$, employing the update rule $x_{t+1}=x_t-\alpha g_t$ (where $g_t$ is a \textit{quantized} version of the gradient $\nabla f(x_t)$) may still guarantee convergence to $x^*$. However, the convergence rate in Eq.~\eqref{eqn:basicGD} may no longer be preserved. Indeed, the typical convergence rate achieved by almost all existing compressed/quantized gradient descent algorithms (see, e.g.,~\cite{reddystich,stichsparse}) is 
$$O\left(\exp\left(-\frac{t}{\kappa \delta}\right)\right),$$ where $\delta \geq 1$ captures the level of compression. A higher $\delta$ introduces more distortion, which causes the exponent of convergence to be a factor of $\delta$ slower than that of vanilla unquantized gradient descent. An exception is the recent work \cite{kostina} that employs a differentially quantized approach and preserves the convergence rate of the vanilla unquantized gradient descent. However, the algorithm proposed in \cite{kostina} requires maintaining an auxiliary sequence (besides $\{x_t\}_{t\ge0}$) and the convergence result relies on the (global) smoothness and strong convexity of the objective function. 

\begin{algorithm}[t]
\caption{Adaptively Quantized Gradient Descent (\texttt{AQGD})}
\label{algo:AQGD}  
\begin{algorithmic}[1] 
\State \textbf{Initialization:} $x_0=0$, $g_{-1}=0$, pick contraction factor $\gamma$ and pick $R_0$ such that $\Vert \nabla f(x_0) \Vert \leq R_0.$
\State \textbf{For} \hspace{0.25mm} {$t = 0, 1, \ldots, $} \hspace{0.25mm} \textbf{do}
\State \quad \textbf{At Worker:}
\State \quad Receive iterate $x_t$, gradient estimate $g_{t-1}$, and range $R_t$ from server. 
\State \quad Compute innovation $i_t= \nabla f(x_t)-g_{t-1}.$

\State \quad Encode the innovation $\tilde{i}_t = \mathcal{Q}_{b,R_t}(i_t).$ 
\quad 
\State \quad \textbf{At Decision-Maker/Server:}
\State \quad Decode $\tilde{i}_t,$ and estimate current gradient: $g_t=g_{t-1}+\tilde{i}_t.$
\State \quad Update the model as follows:
\begin{equation}
x_{t+1}=x_t-\alpha g_t.
\label{eqn:AQGD}
\end{equation}
\State \quad Update the range of the quantizer map as follows: 
\begin{equation}
R_{t+1}= \gamma R_{t}+\alpha L \Vert g_t \Vert.
\label{eqn:Range_Update}
\end{equation}
\State \textbf{End For}
\end{algorithmic}
 \end{algorithm}

\subsection{The \texttt{AQGD} Algorithm}\label{sec:algorithm design for AQGD}
In this section, we will introduce the Adaptively Quantized Gradient Descent (\texttt{AQGD}) algorithm; the detailed steps are included in Algorithm~\ref{algo:AQGD}. In contrast to the algorithm proposed in \cite{kostina}, the \texttt{AQGD} algorithm is conceptually simpler and does not require maintaining any auxiliary sequence. More importantly, we will show that \texttt{AQGD} preserves the convergence of unquantized gradient descent under weaker assumptions on the objective function (i.e., local smoothness and gradient domination). Additionally, we will show later in Section~\ref{sec:noisy AQDG} that \texttt{AQGD} can be further extended to handle noisy gradient calculations at the worker agent. 

Let us start by providing some high-level intuition behind the design of \texttt{AQGD}. The core idea is to quantize an innovation sequence corresponding to the gradient sequence $\{\nabla f(x_t)\}_{t\ge0}$, rather than directly quantizing $\{\nabla f(x_t)\}_{t\ge0}$. To see why this makes sense, let $g_{t-1}$ in Algorithm~\ref{algo:AQGD} represent the estimate of the gradient $\nabla f(x_{t-1})$ at the decision maker (or server) in iteration $t-1$. Supposing the function $f(\cdot)$ is smooth, the new gradient $\nabla f(x_t)$ at the worker cannot change abruptly from its previous value $\nabla f(x_{t-1})$ (as per Definition~\ref{def:smooth}). Based on this observation, if the decision-maker has a reasonably good estimate $g_{t-1}$ of the true gradient $\nabla f(x_{t-1})$ at iteration $t-1$ (i.e., $g_{t-1}$ is close to $\nabla f(x_{t-1})$), then $g_{t-1}$ should also be close to $\nabla f(x_t)$. Therefore, we define an innovation sequence $i_t=\nabla f(x_t)-g_{t-1}$, which captures the new information contained in the gradient $\nabla f(x_t)$ obtained by the worker at iteration $t$, relative to the most recent gradient estimate $g_{t-1}$ held by the decision-maker from iteration $t-1$. The worker encodes (i.e., quantizes) the innovation sequence $\{i_t\}_{t\ge0}$ based on the intuition that the terms in this sequence will eventually shrink in magnitude. We now discuss in detail the key components of the \texttt{AQGD} algorithm. 

$\bullet$ {\bf Quantization Scheme.} Line~6 of Algorithm~\ref{algo:AQGD} can be applied in tandem with any reasonably designed quantizer. A particularly simple instance of such a scheme is the scalar quantizer that we describe next. Suppose we want to quantize a vector $X \in \mathbb{R}^d$ with $\Vert X \Vert \leq R$. To achieve this, we can encode each component of $X$ separately using scalar quantizers. Since $\Vert X\Vert\le R$, we have $X_i \in [-R, R], \forall i \in [d]$, where $X_i$ is the $i$-th component of $X$. To encode each $X_i$, the scalar quantizer with $b$ bits partitions the interval $[-R,  R]$ into $2^b$ bins of equal width, and sets the center point $\tilde{X}_i$ of the bin containing $X_i$ to be the quantized version of $X_i$. This gives $\tilde{X}=[\tilde{X}_1, \ldots, \tilde{X}_d]^T$ as the quantized version of $X$. From the above arguments, we know that the scalar quantizer depends on both the range of each component $R$ and the number of bits $b$. Thus, we will use a quantizer map $\mathcal{Q}_{b,R}:\mathbb{R}^d \rightarrow \mathbb{R}^d$ parameterized by $R$ and $b$ to succinctly describe this scalar quantizer. Specifically, given $\Vert X \Vert \leq R$, we have $\tilde{X}=\mathcal{Q}_{b,R}(X)$. In the sequel, unless otherwise specified, we use the scalar quantizer described above in Algorithm~\ref{algo:AQGD}.  Later in Section~\ref{subsec:minimal}, we will argue that more complicated quantizers can be used to reduce the channel capacity required to achieve a certain level of convergence performance of the \texttt{AQGD} algorithm.

$\bullet$ {\bf Adaptive Ranges of the Quantizer.} Following our arguments above, Algorithm~\ref{algo:AQGD} recursively updates (i.e., decreases) the range $R_t$ of the quantizer map $\mathcal{Q}_{b,R_t}(\cdot)$. The intuition behind this step is as follows. Supposing our algorithm operates correctly, i.e., $x_t\to x^*$, we must have that $\nabla f(x_t) \rightarrow 0$. This, in turn, would imply that the gap between consecutive gradients in the sequence $\{\nabla f(x_t)\}_{t\ge0}$  should gradually shrink to $0$. Recalling that Algorithm~\ref{algo:AQGD} encodes the innovations in the sequence $\{\nabla f(x_t)\}_{t\ge0}$, this means that the innovation sequence $\{i_t\}_{t\ge0}$ should be contained in balls of progressively smaller radii. The key idea here is to maintain estimates of the radii of such balls, which in turn refine the quantizer range used to encode the innovation (as per line~10 of Algorithm~\ref{algo:AQGD}). Note that although the quantizer $\mathcal{Q}_{b,R_t}(\cdot)$ uses the same number of bits $b$ to encode each component of $i_t$ across the iterations in Algorithm~\ref{algo:AQGD}, a progressively finer quantization accuracy is afforded by the fact that the range $R_t$ shrinks with time.  We will leverage this property of the quantizer $Q_{b,R_t}(\cdot)$ in our analysis later. Note also that since we initialize $g_{-1}=0$, we have $i_0=\nabla f(x_0)$ and thus an initial upper bound $R_0$ on $\Vert\nabla f(x_0)\Vert$ needs to be known.

$\bullet$ \textbf{Correct Decoding at the Server.} To successfully run Algorithm~\ref{algo:AQGD}, we assume that the server knows the exact encoding strategy at the worker (i.e., knows the parameters $b$ and $R_t$). Then, given the $\bar{B}$-bit symbolic encoding of $\tilde{i}_t$, where $\bar{B}=bd$, the server can decode $\tilde{i}_t$ exactly.

\subsection{Convergence Analysis and Results for \texttt{AQGD}}
\label{sec:analysis}
Our first main convergence result pertains to the performance of \texttt{AQGD} under global assumptions of strong convexity and smoothness on the underlying function $f$. 

\begin{theorem} \label{thm:AQGD} (\textbf{Convergence of \texttt{AQGD}}) Suppose $f:\mathbb{R}^d \rightarrow \mathbb{R}$ is $L$-smooth and $\mu$-strongly convex. Suppose \texttt{AQGD} (Algorithm~\ref{algo:AQGD}) is run with step-size $\alpha=1/(6L)$ and contraction factor $\gamma=\sqrt{d}/2^b.$ There exists a universal constant $C \geq 1$ such that if the bit-precision $b$ per component satisfies
\begin{equation}
b \geq C \log \left( \frac{d\kappa}{\kappa-1} \right), 
\label{eqn:rate_req}
\end{equation}
then the following is true $\forall t \geq 0$: 
\begin{equation}
f(x_t)-f(x^*) \leq \left(1-\frac{1}{12\kappa}\right)^t \left( f(x_0)-f(x^*) + \alpha R^2_0\right),\hspace{2mm} \textrm{where} \hspace{2mm} \kappa = L/\mu.
\label{eqn:AQGD_bnd}
\end{equation}
\end{theorem}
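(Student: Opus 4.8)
The plan is to control three coupled quantities along the iterations: the optimization gap $a_t := f(x_t)-f(x^*)$, the quantizer range $R_t$, and the quantization error $e_t := g_t-\nabla f(x_t)$, and then to bundle them into a single Lyapunov function that contracts at the advertised rate. First I would fix the error model. For the scalar quantizer, $\Vert X\Vert\le R$ forces every coordinate into $[-R,R]$, where it is rounded to the center of one of $2^b$ equal bins; hence the per-coordinate error is at most $R/2^b$ and $\Vert\mathcal{Q}_{b,R}(X)-X\Vert\le\sqrt{d}\,R/2^b=\gamma R$ with $\gamma=\sqrt{d}/2^b$. Since $g_t=g_{t-1}+\tilde\imath_t$ and $i_t=\nabla f(x_t)-g_{t-1}$, one gets the clean identity $g_t=\nabla f(x_t)+e_t$, so the server step \eqref{eqn:AQGD} is exactly a \emph{perturbed} gradient step with perturbation $e_t$. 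The whole argument then routes through the bound $\Vert e_t\Vert\le\gamma R_t$, which is valid only when the quantizer is ``in range,'' i.e. when $\Vert i_t\Vert\le R_t$.

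The heart of the proof is therefore the invariant $\Vert i_t\Vert\le R_t$, which I would establish by induction. The base case is $i_0=\nabla f(x_0)$ (as $g_{-1}=0$), satisfying $\Vert i_0\Vert\le R_0$ by the choice of $R_0$. For the step, $\Vert i_t\Vert\le R_t$ gives $\Vert e_t\Vert\le\gamma R_t$, and writing $i_{t+1}=\nabla f(x_{t+1})-g_t=\bigl(\nabla f(x_{t+1})-\nabla f(x_t)\bigr)-e_t$ together with $L$-smoothness (Definition~\ref{def:smooth}) yields
\[
\Vert i_{t+1}\Vert\le L\Vert x_{t+1}-x_t\Vert+\Vert e_t\Vert\le \alpha L\Vert g_t\Vert+\gamma R_t=R_{t+1},
\]
which is \emph{exactly} the range update \eqref{eqn:Range_Update}. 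Thus the adaptive range is engineered precisely so that smoothness forces the next innovation back inside the quantizer window, and the invariant (hence $\Vert e_t\Vert\le\gamma R_t$ for all $t$) propagates for free.

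With $\Vert e_t\Vert\le\gamma R_t$ in hand, I would run the descent lemma on $x_{t+1}=x_t-\alpha(\nabla f(x_t)+e_t)$ with $\alpha=1/(6L)$; expanding the quadratic and taming the cross term $\langle\nabla f(x_t),e_t\rangle$ via Young's inequality collapses everything to $a_{t+1}\le a_t-\tfrac{\alpha}{2}\Vert\nabla f(x_t)\Vert^2+\tfrac{\alpha}{2}\Vert e_t\Vert^2$, and $\mu$-strong convexity through the Polyak--Łojasiewicz bound $\Vert\nabla f(x_t)\Vert^2\ge 2\mu\,a_t$ gives $a_{t+1}\le(1-\tfrac{1}{6\kappa})a_t+\tfrac{\alpha}{2}\gamma^2 R_t^2$. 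In parallel, squaring \eqref{eqn:Range_Update} and using $\Vert g_t\Vert\le\Vert\nabla f(x_t)\Vert+\gamma R_t$ with the smoothness estimate $\Vert\nabla f(x_t)\Vert^2\le 2L\,a_t$ produces a recursion of the form $R_{t+1}^2\le \phi(\gamma)\,R_t^2+\psi(\gamma)\,a_t$, whose coefficients shrink with $\gamma$. I would then combine these into $\Phi_t=a_t+c\,R_t^2$ and show $\Phi_{t+1}\le(1-\tfrac{1}{12\kappa})\Phi_t$: choose the weight $c$ small enough that the $a_t$-feedback from the range recursion does not erode the $(1-\tfrac{1}{6\kappa})$ contraction below $(1-\tfrac{1}{12\kappa})$, and require $\gamma$ small enough that the $R_t^2$ block also contracts at rate $(1-\tfrac{1}{12\kappa})$. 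Since any $c\le\alpha$ is admissible, $\Phi_0\le a_0+\alpha R_0^2$ and $a_t\le\Phi_t\le(1-\tfrac{1}{12\kappa})^t\Phi_0$ deliver exactly \eqref{eqn:AQGD_bnd}.

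The hard part will be this final balancing. Because $R_t$ and the gradient norm are of the same order, the two recursions are genuinely coupled: quantization noise flows through $R_t$ into the optimization error, and the gradient scale flows back into $R_{t+1}$. Making the resulting $2\times2$ system contract at the unquantized-up-to-constants rate $(1-\tfrac{1}{12\kappa})$ — rather than at a $\gamma$- or $\kappa$-degraded rate — is what forces the bit budget above a threshold: the innovation $i_t\approx\nabla f(x_t)-\nabla f(x_{t-1})$ shrinks geometrically as $x_t\to x^*$, and to let the range $R_t$ keep pace one must spend, per coordinate, roughly $\log\tfrac{1}{\text{(GD contraction)}}=\log\tfrac{\kappa}{\kappa-1}$ bits of resolution on top of the $\log\sqrt{d}$ bits charged for the dimension, which is precisely the content of the requirement \eqref{eqn:rate_req} and matches (up to the universal constant $C$) the converse of~\cite{kostina}. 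Getting the constants to align so that the surviving exponent is exactly $1/(12\kappa)$, and verifying along the way that $\gamma<1$ so that $R_t$ decays rather than diverges, is the delicate bookkeeping I expect to be the main obstacle.
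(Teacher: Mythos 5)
Your architecture is the paper's: the induction establishing the no-overflow invariant $\Vert i_t\Vert\le R_t$ (hence $\Vert e_t\Vert\le\gamma R_t$), the perturbed descent lemma, and the Lyapunov function $a_t+c\,R_t^2$ are exactly Lemmas~\ref{lemma:quant}--\ref{lemma:range} and Eq.~\eqref{eqn:Lyap} (the paper takes $c=\alpha$ and proves the result under gradient domination, of which strong convexity is a special case). The one place you deviate is the final coupling step, and there the proposal as written breaks. You convert $\Vert\nabla f(x_t)\Vert^2\le 2L\,a_t$ inside the squared range recursion to get $R_{t+1}^2\le\phi(\gamma)R_t^2+\psi(\gamma)a_t$ and assert that both coefficients shrink with $\gamma$. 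They do not: the gradient term in $R_{t+1}$ enters through $\alpha L\Vert g_t\Vert\le\alpha L(\Vert\nabla f(x_t)\Vert+\gamma R_t)$, so $\psi=\Theta(\alpha^2L^3)$ carries no factor of $\gamma$. Consequently the claim that ``any $c\le\alpha$ is admissible'' fails: with $c=\alpha=1/(6L)$ the feedback into the $a$-row is $c\psi\,a_t\approx 4\alpha^3L^3a_t=a_t/54$, which must fit inside a contraction margin of order $a_t/\kappa$, impossible once $\kappa\gtrsim 5$. Shrinking $c$ to $O(\mu/L^2)$ repairs the $a$-row but then the $R^2$-row condition $\tfrac{2\alpha\gamma^2}{3}+8c\gamma^2\le(1-\tfrac{1}{12\kappa})c$ forces $\gamma^2=O(1/\kappa)$, i.e.\ $b\gtrsim\tfrac12\log(d\kappa)$ --- a threshold that grows with $\kappa$ and is not implied by \eqref{eqn:rate_req} for any universal $C$. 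So your route proves linear convergence, but not the theorem's bit threshold, which is the point of the result.

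The fix is a change in the order of operations, and it is what the paper does: do \emph{not} pass to $a_t$ inside the range recursion. Keep $R_{t+1}^2\le 8\gamma^2R_t^2+2\alpha^2L^2\Vert\nabla f(x_t)\Vert^2$ (Lemma~\ref{lemma:range}), add $\alpha R_{t+1}^2$ to the descent inequality so that the positive $2\alpha^3L^2\Vert\nabla f(x_t)\Vert^2$ is absorbed by the negative $-\alpha(\tfrac12-\alpha L)\Vert\nabla f(x_t)\Vert^2$ \emph{before} any use of strong convexity, and only then apply $\Vert\nabla f(x_t)\Vert^2\ge 2\mu\,a_t$ to the surviving negative gradient term. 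With $\alpha L\le 1/6$ this gives $V_{t+1}\le(1-\tfrac{\alpha\mu}{2})a_t+9\alpha\gamma^2R_t^2$, and the only condition on the quantizer is $9\gamma^2\le 1-\tfrac{1}{12\kappa}$, which is exactly what \eqref{eqn:rate_req} delivers. The reason the order matters is that $\Vert\nabla f\Vert^2$ sits between $2\mu a_t$ and $2La_t$; converting the range recursion's gradient term upward to $2La_t$ while the descent term is only credited $2\mu a_t$ loses a factor of $\kappa$ that cannot be recovered by tuning $c$.
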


\textbf{Discussion.} Comparing Eq.~\eqref{eqn:AQGD_bnd} to Eq.~\eqref{eqn:basicGD} reveals that \texttt{AQGD} \textit{preserves the exact same linear rate of convergence (up to universal constants) as vanilla unquantized gradient descent}, provided the channel capacity $\bar{B}=bd$ satisfies the requirement on $b$ in Eq.~\eqref{eqn:rate_req}. As mentioned earlier, commonly used compression schemes (including sophisticated ones like error-feedback~\cite{stichsparse}) scale down the exponent of the convergence rate by a factor $\delta \geq 1$ that captures the level of compression. The main takeaway then from Theorem~\ref{thm:AQGD} is that our simple scheme based on adaptive quantization can avoid such a scale-down, \textit{without the need for maintaining an auxiliary sequence as in the algorithm proposed in~\cite{kostina}}. 

The authors in~\cite{kostina} prove a converse result showing that to match the rate of unquantized gradient descent, a necessary condition on the bit-rate is 
\begin{equation}
 b \geq \log \left( \frac{\kappa+1}{\kappa-1} \right).
 \label{eqn:min_bit}
\end{equation}
We see from Eq.~\eqref{eqn:rate_req} that there is an extra $\log(d)$ factor compared to the \emph{minimal} rate in  Eq.~\eqref{eqn:min_bit}. This extra factor is due to the choice of a scalar quantizer in Algorithm~\ref{algo:AQGD} as we discussed above. By using a more complicated vector quantizer in \texttt{AQGD}, we will show in Section~\ref{subsec:minimal} that the extra $\log(d)$ factor in Eq.~\eqref{eqn:rate_req} can be shaved off.

As mentioned before, another merit of the \texttt{AQGD} algorithm is that its convergence analysis can be extended gracefully to functions that are locally smooth and gradient-dominant, and its algorithm design can be extended to handle noisy gradients. These developments are then applicable to the model-free LQR setting under communication constraints (see Section~\ref{sec:noisy AQDG}). As an initial step to build towards the LQR problem of eventual interest to us, we formally show in the next theorem that the convergence result in Theorem~\ref{thm:AQGD} can be generalized to (potentially non-convex) functions with the gradient-domination property. 

\begin{theorem} \label{thm:PL} Suppose $f:\mathbb{R}^d \rightarrow \mathbb{R}$ is $L$-smooth and satisfies the following gradient-domination property:
\begin{equation}
 \Vert \nabla f(x) \Vert^2 \geq 2\mu (f(x)-f(x^*)), \forall x \in \mathbb{R}^d,
 \label{eqn:PL}
\end{equation}
where $x^* \in \argmin_{x\in \mathbb{R}^d} f(x)$. Let $\alpha, \gamma$, and the bit-precision $b$ be chosen as in Theorem~\ref{thm:AQGD}. Then, \texttt{AQGD} provides exactly the same guarantee as in Eq.~\eqref{eqn:AQGD_bnd}. 
\end{theorem}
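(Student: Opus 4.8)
The plan is to reuse the machinery behind Theorem~\ref{thm:AQGD} essentially verbatim, after pinpointing exactly where strong convexity was invoked. In the analysis of \texttt{AQGD}, strong convexity is never used on its own: it enters only through its consequence $\Vert \nabla f(x)\Vert^2 \geq 2\mu(f(x)-f(x^*))$, which is precisely the gradient-domination property \eqref{eqn:PL} assumed here. Every remaining ingredient relies solely on $L$-smoothness and the algorithmic recursions. I would therefore introduce the quantization error $e_t = \nabla f(x_t)-g_t = i_t-\tilde{i}_t$ and re-run the argument step by step, checking that smoothness and \eqref{eqn:PL} suffice at each point.

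First I would re-establish the quantizer-consistency (no-overflow) invariant $\Vert i_t\Vert \leq R_t$ for all $t$, which guarantees $\Vert e_t\Vert \leq \gamma R_t$, since the scalar quantizer over a range of size $R_t$ incurs per-coordinate error at most $R_t/2^b$ and hence total error at most $\gamma R_t$ with $\gamma=\sqrt{d}/2^b$. The inductive step uses only the triangle inequality and smoothness: $\Vert i_{t+1}\Vert = \Vert \nabla f(x_{t+1})-g_t\Vert \leq \Vert \nabla f(x_{t+1})-\nabla f(x_t)\Vert + \Vert \nabla f(x_t)-g_t\Vert \leq \alpha L\Vert g_t\Vert + \gamma R_t = R_{t+1}$, where the last equality is the range update \eqref{eqn:Range_Update}, and the base case follows from the choice of $R_0$. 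Crucially, no convexity is needed here.

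Next I would derive the two coupled recursions and combine them through a Lyapunov function $\Phi_t = (f(x_t)-f(x^*)) + \beta R_t^2$ with a weight $\beta\in(0,\alpha]$ to be fixed. The descent lemma (smoothness alone), applied to the perturbed update $x_{t+1}=x_t-\alpha g_t$ with $g_t=\nabla f(x_t)-e_t$ and $\alpha=1/(6L)$, yields after a Young-inequality split of the cross term a bound of the form $f(x_{t+1})-f(x^*) \leq (f(x_t)-f(x^*)) - c_1\alpha\Vert \nabla f(x_t)\Vert^2 + c_2\alpha\Vert e_t\Vert^2$ with absolute constants $c_1,c_2>0$. This is the single place I invoke \eqref{eqn:PL}, turning $-c_1\alpha\Vert \nabla f(x_t)\Vert^2$ into $-2\mu c_1\alpha(f(x_t)-f(x^*))$. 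For the range, squaring \eqref{eqn:Range_Update} and using $\Vert g_t\Vert^2 \leq 2\Vert \nabla f(x_t)\Vert^2 + 2\Vert e_t\Vert^2$, the smoothness consequence $\Vert \nabla f(x_t)\Vert^2 \leq 2L(f(x_t)-f(x^*))$, and $\Vert e_t\Vert\leq\gamma R_t$, gives $R_{t+1}^2 \leq A R_t^2 + B(f(x_t)-f(x^*))$ with $A=O(\gamma^2)$ and $B=O(L)$. Adding the first recursion to $\beta$ times the second collapses everything to $\Phi_{t+1} \leq (1-\tfrac{1}{12\kappa})\Phi_t$, and unrolling while bounding $\Phi_0 \leq f(x_0)-f(x^*)+\alpha R_0^2$ (using $\beta\leq\alpha$) reproduces \eqref{eqn:AQGD_bnd}.

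The main obstacle is not conceptual but the bookkeeping that certifies the contraction factor is exactly $1-\tfrac{1}{12\kappa}$. The weight $\beta$ must be small enough (of order $1/(\kappa L)$) that the $\beta B(f(x_t)-f(x^*))$ feedback from the range recursion cannot overwhelm the $-2\mu c_1\alpha(f(x_t)-f(x^*))$ gain supplied by \eqref{eqn:PL}; at the same time $\beta$ must be large enough relative to $c_2\alpha\gamma^2$ that the $R_t^2$ coefficient $c_2\alpha\gamma^2+\beta A$ stays below $(1-\tfrac{1}{12\kappa})\beta$. Both conditions, together with $A=O(\gamma^2)<1-\tfrac{1}{12\kappa}$, are simultaneously feasible precisely because the bit-rate requirement \eqref{eqn:rate_req} makes $\gamma=\sqrt{d}/2^b$ (hence $\gamma^2$ and $A$) exponentially small. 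Finally, since every inequality above is phrased in terms of the optimal value $f(x^*)$ and never the minimizer itself, the possible non-uniqueness of $x^*$ under \eqref{eqn:PL} -- the one structural difference from the strongly convex setting of Theorem~\ref{thm:AQGD} -- causes no difficulty.
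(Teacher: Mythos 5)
Your overall architecture---the no-overflow induction giving $\Vert e_t\Vert\le\gamma R_t$, a Lyapunov function coupling $z_t:=f(x_t)-f(x^*)$ with $R_t^2$, and a single invocation of \eqref{eqn:PL}---is exactly the paper's, and your first two paragraphs are sound. The gap is in how you close the range recursion. By inserting the smoothness bound $\Vert\nabla f(x_t)\Vert^2\le 2L\,z_t$ into the squared update of \eqref{eqn:Range_Update}, you obtain $R_{t+1}^2\le A R_t^2+B z_t$ with $B=\Theta(\alpha^2 L^3)$, which injects a \emph{positive} feedback $+\beta B z_t$ into the Lyapunov decrement. Since gradient domination only supplies a decrement of order $\alpha\mu z_t=\Theta(z_t/\kappa)$, you are indeed forced to take $\beta=O(1/(\kappa L))=O(\alpha/\kappa)$. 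But then your second condition, $c_2\alpha\gamma^2+\beta A\le\left(1-\frac{1}{12\kappa}\right)\beta$, forces $\gamma^2=O(1/\kappa)$, i.e.\ a bit-precision $b\gtrsim\log(d\kappa)$. This is strictly stronger than \eqref{eqn:rate_req}: for large $\kappa$ the threshold $C\log\left(\frac{d\kappa}{\kappa-1}\right)$ tends to $C\log d$ and imposes no $\kappa$-dependence on $\gamma$ at all, so your closing claim that \eqref{eqn:rate_req} makes both conditions on $\beta$ simultaneously feasible is false in that regime. (There is also no slack in the exponent: the descent step yields a gain of at most $\frac{\alpha\mu}{2}z_t=\frac{1}{12\kappa}z_t$, so any surviving $+\beta B z_t$ term degrades the contraction factor below the claimed $1-\frac{1}{12\kappa}$.)

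The fix is to not convert $\Vert\nabla f(x_t)\Vert^2$ into $z_t$ inside the range recursion. Keep $R_{t+1}^2\le 8\gamma^2 R_t^2+2\alpha^2L^2\Vert\nabla f(x_t)\Vert^2$, take $\beta=\alpha$, and let the negative term $-\alpha\left(\frac{1}{2}-\alpha L\right)\Vert\nabla f(x_t)\Vert^2$ from the descent lemma absorb the positive term $2\alpha^3L^2\Vert\nabla f(x_t)\Vert^2$ coming from $\alpha R_{t+1}^2$; with $\alpha L\le 1/6$ the net coefficient of $\Vert\nabla f(x_t)\Vert^2$ is still $-\Theta(\alpha)$, and only \emph{then} do you apply \eqref{eqn:PL} to the surviving negative gradient term. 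This eliminates the $z_t$ feedback entirely, and the only requirement on the quantizer becomes $9\gamma^2\le 1-\frac{1}{12\kappa}$, a $\kappa$-uniform constant condition that \eqref{eqn:rate_req} does guarantee. This order of operations is precisely what the paper's proof in Appendix~\ref{app:PLproof} does, and with it the rest of your argument goes through to give \eqref{eqn:AQGD_bnd} verbatim.
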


 Since any strongly convex function is also gradient dominant, it suffices to prove the stronger result in Theorem~\ref{thm:PL}, which naturally applies to the result in Theorem~\ref{thm:AQGD}. While the detailed proof of Theorem~\ref{thm:PL} can be found in Appendix~\ref{app:PLproof}, we provide a sketch of the main ideas below. 

\textbf{Proof Sketch for Theorem~\ref{thm:PL}}. The key technical novelty of this proof is a carefully designed Lyapunov (potential) function candidate, which we show to contract over time. The choice of the Lyapunov function candidate is given by
\begin{equation}
V_t \triangleq z_t + \alpha R^2_t, \hspace{2mm} \textrm{where} \hspace{2mm} z_t=f(x_t)-f(x^*). 
\label{eqn:Lyap}
\end{equation}
Note that when analyzing the convergence of vanilla unquantized gradient descent, it suffices to use $z_t$ (defined in Eq.~\eqref{eqn:Lyap}) as the Lyapunov function \cite{bubeck2015convex}. However, such a choice is not enough to prove our convergence result due to the following reasons: (i) the dynamics of the iterate $x_t$ are intimately coupled with the errors induced by quantization; and (ii) we also need to ensure that the errors due to quantization decay at a certain rate so that the algorithm achieves the desired overall convergence rate. In our proof, we show that the errors due to quantization are upper bounded by the dynamic range $R_t$ of the quantizer. These motivate the choice of the Lyapunov function candidate in Eq.~\eqref{eqn:Lyap}, which depends on the \emph{joint evolution} of $x_t$ and $R_t$. 

Next, we argue that $V_t$ decays to $0$ exponentially fast at the same rate as that of unquantized gradient descent, i.e., at the rate in Eq.~\eqref{eqn:AQGD_bnd}. To achieve this, we will rely on the following two intermediate lemmas.

\begin{lemma} \label{lemma:quant} (\textbf{No Overflow and Quantization Error})  Suppose $f$ is $L$-smooth. The following are then true for all $t\geq 0$: (i) $\Vert i_t \Vert \leq R_t$; and (ii) $\Vert e_t \Vert \leq \gamma R_t,$ where $e_t=\nabla f(x_t)-g_t$.
\end{lemma}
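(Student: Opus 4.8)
The plan is to prove the two claims jointly by induction on $t$, since they are coupled: part (ii) at time $t$ is used to establish part (i) at time $t+1$, while part (i) at time $t$ is exactly what guarantees part (ii) at the same $t$. Treating them as a single invariant is the cleanest way to organize the argument.

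First I would record the accuracy guarantee of the scalar quantizer. If $\Vert X \Vert \leq R$, then each coordinate satisfies $X_i \in [-R,R]$, and since $\mathcal{Q}_{b,R}$ replaces $X_i$ by the center of the width-$(2R/2^b)$ bin containing it, coordinatewise we get $\vert \tilde{X}_i - X_i \vert \leq R/2^b$. Summing over the $d$ coordinates yields $\Vert \mathcal{Q}_{b,R}(X) - X \Vert \leq (\sqrt{d}/2^b)\,R = \gamma R$, with $\gamma$ as chosen in Theorem~\ref{thm:AQGD}. The key algebraic observation is that the server's gradient error is itself a quantization error: from $g_t = g_{t-1} + \tilde{i}_t$ and $i_t = \nabla f(x_t) - g_{t-1}$ we obtain $e_t = \nabla f(x_t) - g_t = i_t - \tilde{i}_t = i_t - \mathcal{Q}_{b,R_t}(i_t)$. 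Hence, whenever part (i) holds at time $t$ (i.e.\ $\Vert i_t \Vert \leq R_t$), the quantizer accuracy bound immediately delivers part (ii), $\Vert e_t \Vert \leq \gamma R_t$.

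The induction therefore reduces to the \emph{no overflow} bound (i). For the base case, $g_{-1}=0$ forces $i_0 = \nabla f(x_0)$, so $\Vert i_0 \Vert \leq R_0$ holds by the initialization choice of $R_0$. For the inductive step, assuming $\Vert i_t \Vert \leq R_t$ (and hence $\Vert e_t \Vert \leq \gamma R_t$ by the preceding paragraph), I would decompose the next innovation by inserting $\nabla f(x_t)$:
\[
i_{t+1} = \nabla f(x_{t+1}) - g_t = \big(\nabla f(x_{t+1}) - \nabla f(x_t)\big) + \big(\nabla f(x_t) - g_t\big) = \big(\nabla f(x_{t+1}) - \nabla f(x_t)\big) + e_t.
\]
Applying $L$-smoothness to the first term together with the update $x_{t+1} - x_t = -\alpha g_t$ gives $\Vert \nabla f(x_{t+1}) - \nabla f(x_t)\Vert \leq L\Vert x_{t+1} - x_t\Vert = \alpha L \Vert g_t \Vert$, and the inductive hypothesis bounds the second term by $\gamma R_t$. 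The triangle inequality then yields $\Vert i_{t+1}\Vert \leq \alpha L \Vert g_t\Vert + \gamma R_t$, which is precisely $R_{t+1}$ by the range-update rule~\eqref{eqn:Range_Update}, closing the induction.

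The argument is essentially mechanical once the right viewpoint is adopted; the single point requiring care is recognizing the self-referential design — the range update $R_{t+1} = \gamma R_t + \alpha L \Vert g_t\Vert$ is engineered exactly so that the telescoping bound on $\Vert i_{t+1}\Vert$ reproduces $R_{t+1}$ — and handling (i) and (ii) as one joint invariant rather than proving them in isolation. Notably, neither strong convexity nor gradient-domination is invoked here; only $L$-smoothness enters, through the lone Lipschitz step above.
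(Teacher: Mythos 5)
Your proof is correct and follows essentially the same route as the paper's: an induction on the no-overflow bound, using the identity $e_t = i_t - \tilde{i}_t$ together with the scalar-quantizer accuracy bound $\Vert i_t - \tilde{i}_t\Vert \le \gamma R_t$, and closing the inductive step via $L$-smoothness, the update rule $x_{t+1}-x_t=-\alpha g_t$, and the range recursion~\eqref{eqn:Range_Update}. The only cosmetic difference is that you frame (i) and (ii) as a single joint invariant, whereas the paper proves (i) by induction and then reads off (ii) as a corollary.
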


Part~(i) of Lemma~\ref{lemma:quant} shows that the innovation $i_t$ belongs to the ball $\mathcal{B}_d(0,R_t)$ for all $t\ge0$. Part~(ii) of Lemma~\ref{lemma:quant} tells us that the quantization error $e_t$ can be upper bounded by the dynamic range $R_t$ of the quantizer. It follows that if $R_t \rightarrow 0$, then $e_t \rightarrow 0$. 

\begin{lemma} (\textbf{Recursion for Dynamic Range}) \label{lemma:range} Suppose $f$ is $L$-smooth. If $\alpha$ is such that $\alpha L \leq 1$, then for all $t \geq 0$, we have:
\begin{equation}
R^2_{t+1} \leq 8 \gamma^2 R^2_t + 2 \alpha^2 L^2 \Vert \nabla f(x_t) \Vert^2. 
\end{equation}
\end{lemma}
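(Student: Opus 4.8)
The plan is to work directly with the deterministic range recursion in Eq.~\eqref{eqn:Range_Update}, namely $R_{t+1}=\gamma R_t+\alpha L\Vert g_t\Vert$, and to reduce everything to a bound on $\Vert g_t\Vert$ expressed through the \emph{true} gradient $\nabla f(x_t)$ plus a quantization-error remainder. The one external ingredient I expect to need is part~(ii) of Lemma~\ref{lemma:quant}, which controls the decoding error: writing $e_t=\nabla f(x_t)-g_t$, we have $\Vert e_t\Vert\le\gamma R_t$. Since $g_t=\nabla f(x_t)-e_t$, the triangle inequality immediately gives $\Vert g_t\Vert\le\Vert\nabla f(x_t)\Vert+\gamma R_t$. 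This is the only place Lemma~\ref{lemma:quant} enters, and it is what converts the server-side quantity $g_t$ appearing in the update rule into an object expressed in terms of the objective's true gradient.

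From here the argument is purely algebraic. Substituting the bound on $\Vert g_t\Vert$ into the range recursion yields
\begin{equation}
R_{t+1}\le \gamma R_t+\alpha L\big(\Vert\nabla f(x_t)\Vert+\gamma R_t\big)=\gamma R_t(1+\alpha L)+\alpha L\Vert\nabla f(x_t)\Vert.\nonumber
\end{equation}
The key step — and the reason the hypothesis $\alpha L\le 1$ is needed — is to absorb the $\gamma R_t$ contribution coming from the quantization error into the leading $\gamma R_t$ term \emph{before} squaring: since $1+\alpha L\le 2$, this gives the clean linear bound $R_{t+1}\le 2\gamma R_t+\alpha L\Vert\nabla f(x_t)\Vert$. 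Both sides are nonnegative (as $R_t\ge0$ is preserved by the update started from $R_0\ge0$), so I may square the inequality and apply $(a+b)^2\le 2a^2+2b^2$ with $a=2\gamma R_t$ and $b=\alpha L\Vert\nabla f(x_t)\Vert$, which produces exactly $R_{t+1}^2\le 8\gamma^2 R_t^2+2\alpha^2 L^2\Vert\nabla f(x_t)\Vert^2$.

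The ``hard part'' is really only a matter of getting the constants to land exactly at $(8,2)$ rather than something looser. The naive route — squaring $\gamma R_t+\alpha L\Vert g_t\Vert$ first via $(a+b)^2\le 2a^2+2b^2$ and only then expanding $\Vert g_t\Vert^2$ — produces a gradient coefficient of $4\alpha^2L^2$ and an extra cross term $4\alpha^2L^2\gamma^2R_t^2$ that one must reabsorb, giving a weaker bound. The correct ordering is to fold the error term into the $\gamma R_t$ part at the \emph{linear} level first, using $\alpha L\le1$, and square only once at the end; this is what keeps the gradient coefficient at the advertised $2\alpha^2L^2$, which is precisely what the downstream Lyapunov contraction argument in the proof of Theorem~\ref{thm:PL} will require.
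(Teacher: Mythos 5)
Your proof is correct and follows essentially the same route as the paper's: both invoke part~(ii) of Lemma~\ref{lemma:quant} to bound $\Vert g_t\Vert\le\Vert\nabla f(x_t)\Vert+\gamma R_t$, absorb the quantization-error contribution into the $\gamma R_t$ term at the linear level using $\alpha L\le 1$ to reach $R_{t+1}\le 2\gamma R_t+\alpha L\Vert\nabla f(x_t)\Vert$, and then square via $(a+b)^2\le 2a^2+2b^2$. Your remark about the ordering of the squaring step matching the constants $(8,2)$ is a nice observation but does not change the argument, which coincides with the paper's.
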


Lemma~\ref{lemma:range} also reveals that the dynamic range $R_t$ depends on the magnitude of the gradient $\nabla f(x_t)$. Thus, to understand how the behavior of $R_t$ relates to that of $x_t$, we need the choice of the potential function $V_t$ in Eq.~\eqref{eqn:Lyap}. Now, using the two lemmas above and under the assumptions of smoothness and gradient domination, we show that 
$$ V_{t+1} \leq \underbrace{\left(1- \frac{\alpha \mu}{2} \right) z_t}_{T_1} + \underbrace{9 \alpha \gamma^2 R^2_t.}_{T_2}$$
In the above display, $T_1$ represents the optimization error that corresponds to the convergence of $f(x_t)$ to $f(x^*)$, and $T_2$ represents the quantization error. Hence, to achieve the final rate in Eq.~\eqref{eqn:AQGD_bnd}, we need the quantization error to decay faster than the optimization error. This can be achieved by choosing the bit-rate $b$ as in Eq.~\eqref{eqn:rate_req} so that the contraction factor $\gamma=\sqrt{d}/2^b$ is small enough. 

\subsection{Achieving Minimal Bit-Rates using $\epsilon$-net Coverings}
\label{subsec:minimal}
As mentioned earlier, we see from Eq.~\eqref{eqn:min_bit} and Eq.~\eqref{eqn:rate_req} that the channel capacity above which \texttt{AQGD} preserves the same bounds as unquantized gradient descent contains an extra factor $\log(d)$ compared to the minimal required capacity shown in \cite{kostina}. In this section, we will show that this extra $\log(d)$ factor can be shaved off by a more complicated quantization scheme (compared to the uniform scalar quantizer discussed in Section~\ref{sec:algorithm design for AQGD}). Specifically, we will show that circular across-the-dimensions quantizers are more efficient than rectangular dimension-by-dimension vector quantizers as the number of dimensions of the target vector increases. Formally, we introduce the following notion of an $\epsilon$-net~\cite{vershynin}. 

\begin{definition} Consider a subset $\mathcal{K} \subset \mathbb{R}^{d}$ and let $\epsilon > 0$. A subset $\mathcal{N} \subseteq \mathcal{K}$ is called an $\epsilon$-net of $\mathcal{K}$ if every point in $\mathcal{K}$ is within a distance of $\epsilon$ of some point of $\mathcal{N}$, i.e., 
$ \forall x \in \mathcal{K}, \exists x_0 \in \mathcal{N}: {\Vert x-x_0 \Vert} \leq \epsilon.$
\end{definition}

Based on the $\epsilon$-net  defined above, we modify line~6 of Algorithm~\ref{algo:AQGD} to construct a $\gamma R_t$-net of the ball $\mathcal{B}_d(0, R_t)$, where $\gamma \in (0,1)$ is again an input parameter to the algorithm that will be specified shortly. To be more specific, at each iteration $t$ of Algorithm~\ref{algo:AQGD}, the encoding region $\mathcal{B}_d(0, R_t)$ is covered by a union of balls each of radius $\gamma R_t$. The center $\tilde{i}_t$ of the ball containing the innovation $i_t$ is determined, and a symbol corresponding to this center is transmitted to the decision-maker. We then have the following result for the \texttt{AQGD} algorithm when the encoding scheme described above is used. 

\begin{theorem} \label{thm:min_bit} Suppose $f:\mathbb{R}^d \rightarrow \mathbb{R}$ is $L$-smooth and satisfies the gradient-domination property in Eq.~\eqref{eqn:PL}. Consider the version of \texttt{AQGD} described above with 
\begin{equation}
\gamma = \frac{1}{3}\left(1-\frac{1}{\kappa}\right).
\label{eqn:gamma}
\end{equation}
Let the channel bit-rate $\bar{B}$ satisfy:
\begin{equation}
 \bar{B} \geq d \log \left(7 \left( \frac{\kappa+1}{\kappa-1} \right)\right).
 \label{eqn:net_rate}
\end{equation}
Then, with $\alpha \leq 1/(6L)$, the rate of convergence of this variant of \texttt{AQGD} is the same as that in Eq.~\eqref{eqn:AQGD_bnd}. 
\end{theorem}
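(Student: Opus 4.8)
The plan is to observe that the convergence proof of Theorem~\ref{thm:PL} is completely modular in the quantizer: the argument only ever uses two properties of the encoding scheme, namely the two conclusions of Lemma~\ref{lemma:quant} — no overflow ($\Vert i_t\Vert\le R_t$) and the quantization-error bound ($\Vert e_t\Vert\le\gamma R_t$, where $e_t=\nabla f(x_t)-g_t$) — together with the range recursion of Lemma~\ref{lemma:range}. Nowhere does the Lyapunov analysis use the internal structure of the scalar quantizer beyond these facts. Hence it suffices to re-establish Lemma~\ref{lemma:quant} for the $\epsilon$-net encoder, to check that the $\gamma$ chosen in Eq.~\eqref{eqn:gamma} is small enough for the contraction to close at the target rate, and finally to count the bits needed to index the net.

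First I would verify the error bound, which is precisely where the $\sqrt{d}$ saving arises. By construction $\tilde i_t$ is the center of a ball of radius $\gamma R_t$ in the $\gamma R_t$-net of $\mathcal{B}_d(0,R_t)$ that contains $i_t$; since $g_t=g_{t-1}+\tilde i_t$ and $i_t=\nabla f(x_t)-g_{t-1}$, we have $e_t=\nabla f(x_t)-g_t=i_t-\tilde i_t$, so the net property gives $\Vert e_t\Vert=\Vert i_t-\tilde i_t\Vert\le\gamma R_t$ directly — with no dimension-dependent factor, in contrast to the per-coordinate scalar quantizer, where the Euclidean error accumulates a factor $\sqrt{d}$. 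The no-overflow property then follows by the same induction as in Lemma~\ref{lemma:quant}: writing $i_t=(\nabla f(x_t)-\nabla f(x_{t-1}))+e_{t-1}$, smoothness bounds the first term by $\alpha L\Vert g_{t-1}\Vert$ and the error bound (applied at step $t-1$) bounds the second by $\gamma R_{t-1}$, which sum to $R_t$ by the range update Eq.~\eqref{eqn:Range_Update}; this is also what guarantees $i_t\in\mathcal{B}_d(0,R_t)$, so that the net is a valid cover for it. With both properties in hand, Lemmas~\ref{lemma:quant} and~\ref{lemma:range} hold verbatim for the $\epsilon$-net scheme, and the entire derivation of $V_{t+1}\le(1-\alpha\mu/2)z_t+9\alpha\gamma^2 R_t^2$ for $V_t=z_t+\alpha R_t^2$ carries over unchanged.

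It then remains to confirm that the contraction closes at the target rate with this $\gamma$. Taking $\alpha=1/(6L)$, the coefficient of $z_t$ is $1-\alpha\mu/2=1-1/(12\kappa)$, matching the exponent in Eq.~\eqref{eqn:AQGD_bnd}; the only remaining requirement is that the quantization term be dominated, i.e.\ that $9\gamma^2\le 1-1/(12\kappa)$, so that $V_{t+1}\le(1-1/(12\kappa))V_t$. With $\gamma=\tfrac13(1-1/\kappa)$ this reduces to $(1-1/\kappa)^2\le 1-1/(12\kappa)$, which holds for every $\kappa\ge1$; thus $V_t$ contracts at rate $1-1/(12\kappa)$ and the bound Eq.~\eqref{eqn:AQGD_bnd} follows.

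Finally I would bound the bit budget. The number of symbols the worker must send equals the cardinality of a $\gamma R_t$-net of $\mathcal{B}_d(0,R_t)$, which by scaling is the covering number of the unit ball at scale $\gamma$; the standard volumetric estimate~\cite{vershynin} gives at most $(1+2/\gamma)^d$ points. Substituting $\gamma=\tfrac13\cdot\tfrac{\kappa-1}{\kappa}$ yields $1+2/\gamma=\tfrac{7\kappa-1}{\kappa-1}\le 7\cdot\tfrac{\kappa+1}{\kappa-1}$, so indexing the net needs at most $d\log\!\big(7\tfrac{\kappa+1}{\kappa-1}\big)$ bits, which is exactly the hypothesis Eq.~\eqref{eqn:net_rate}; hence $\bar{B}$ bits suffice for exact decoding at the server and the argument goes through. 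The step I would be most careful about is the decoupling of the roles of $\gamma$: in Theorem~\ref{thm:PL} the contraction fixed $\gamma=\sqrt{d}/2^b$ and the bit bound was derived from it, whereas here $\gamma$ is chosen first (Eq.~\eqref{eqn:gamma}), and one must separately check both that it meets the smallness condition $9\gamma^2\le 1-1/(12\kappa)$ for the Lyapunov step and that its induced covering number matches Eq.~\eqref{eqn:net_rate} — getting the universal constant $7$ to come out exactly relies on the slack $7\kappa-1\le 7\kappa+7$ in the covering-number estimate.
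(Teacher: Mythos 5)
Your proposal is correct and follows essentially the same route as the paper's own proof: it reduces the claim to the two quantizer properties needed by the Lyapunov argument of Theorem~\ref{thm:PL}, verifies the rate-preservation condition $9\gamma^2 \le 1-\tfrac{1}{12\kappa}$ for the choice of $\gamma$ in Eq.~\eqref{eqn:gamma}, and bounds the alphabet size via the volumetric covering-number estimate $\left(1+\tfrac{2}{\gamma}\right)^d \le \left(7\tfrac{\kappa+1}{\kappa-1}\right)^d$. The only difference is that you spell out the re-derivation of the no-overflow induction for the $\epsilon$-net encoder, which the paper leaves implicit by asserting the convergence proof carries over verbatim.
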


\begin{proof}
The convergence proof is exactly the same as that for Theorem~\ref{thm:PL} in Appendix~\ref{app:PLproof}. Specifically, we first note that if $i_t \in  \mathcal{B}_d(0, R_t),$ then by definition of a $\gamma R_t$-net of $\mathcal{B}_d(0, R_t)$, it holds that $\Vert i_t - \tilde{i}_t \Vert \leq \gamma R_t$. In addition, we need the following condition to hold:
$$ 9 \gamma^2 \leq \left(1-\frac{\alpha \mu}{2}\right) = \left(1-\frac{1}{12 \kappa}\right).$$
One can check that the choice of $\gamma$ in Theorem~\ref{thm:min_bit} satisfies the above condition. 

Next, we need to argue that the bit-rate in Eq.~\eqref{eqn:net_rate} suffices to construct a $\gamma R_t$-net of $\mathcal{B}_d(0, R_t)$ at each iteration. To that end, we will require the concept of covering numbers.

\begin{definition} (\textbf{Covering Numbers}) Let $\mathcal{K}$ be a subset of $\mathbb{R}^d$. The smallest cardinality of an $\epsilon$-net of $\mathcal{K}$ is called the covering number of $\mathcal{K}$ and is denoted by $\mathcal{N}(\mathcal{K},\epsilon)$. Equivalently, $\mathcal{N}(\mathcal{K},\epsilon)$ is the smallest number of closed balls with centers in $\mathcal{K}$ and radii $\epsilon$ whose union covers $\mathcal{K}$. 
\end{definition}

We will rely on the following key result that relates the covering number of a set $\mathcal{K}$ to its volume. 

\begin{lemma} \label{lemma:covers}  (\textbf{Covering Numbers and Volume}) \cite[Proposition 4.2.12]{vershynin} Let $\mathcal{K}$ be a subset of $\mathbb{R}^d$, and $\epsilon >0$. Then,
$$ \mathcal{N}(\mathcal{K},\epsilon) \leq \frac{\vert \left(\mathcal{K}\oplus(\epsilon/2)\mathcal{B}_d(0,1)\right)\vert}{\vert (\epsilon/2)\mathcal{B}_d(0,1)\vert}. $$
Here, we used $|\mathcal{K}|$ to represent the volume of a set $\mathcal{K}$, and $\mathcal{K}_1 \oplus \mathcal{K}_2$ to denote the Minkowski sum of two sets $\mathcal{K}_1, \mathcal{K}_2 \subset \mathbb{R}^d$.  
\end{lemma}
 
Recall that our encoding strategy involves constructing a $\gamma R_t$ - net of the ball $\mathcal{B}_d(0,R_t)$. Invoking Lemma \ref{lemma:covers} with $\mathcal{K}=\mathcal{B}_d(0,R_t)$, we obtain:
\begin{equation}
    \begin{aligned}
     \mathcal{N}(\mathcal{B}_d(0,R_t),\gamma\-R_t) &\leq \frac{\vert \left(\mathcal{B}_d(0,R_t)\oplus(\gamma R_t/2)\mathcal{B}_d(0,1)\right)\vert}{\vert (\gamma R_t/2)\mathcal{B}_d(0,1)\vert}\\
     &= \frac{\vert \left(1+\gamma/2\right)R_t\mathcal{B}_d(0,1) \vert}{\vert (\gamma R_t/2)\mathcal{B}_d(0,1)\vert}\\
     & = \frac{\left[{\left(1+\gamma/2\right)R_t}\right]^d}{\left[\left(\gamma/2\right)R_t\right]^d}\\
     & = \left(\frac{2}{\gamma}+1\right)^d.
    \end{aligned}
\end{equation}
Plugging in the choice of $\gamma$ from Eq.~\eqref{eqn:gamma} into the above display and simplifying, we obtain:
$$
\mathcal{N}(\mathcal{B}_d(0,R_t),\gamma\-R_t) \leq \left(7 \left(\frac{\kappa +1}{\kappa -1}\right)\right)^d. 
$$
It follows that the maximum number of balls needed to cover the encoding region $\mathcal{B}_d(0, R_t)$ in any iteration $t$ is upper bounded by the right-hand side of the above display; the same bound also applies to the size of the binary alphabet $\Sigma_t$ required for encoding the innovation $\tilde{i}_t$ at any $t$. To realize such an alphabet $\Sigma_t$, the maximum number of bits we need is given by
$$ \log(|\Sigma_t|) \leq d \log \left(7 \left( \frac{\kappa+1}{\kappa-1} \right)\right),                     $$
where $|\Sigma_t|$ denotes the cardinality of the alphabet $\Sigma_t.$ This completes the proof. 
\end{proof}

Some remarks about Theorem~\ref{thm:min_bit} are now in order. First, we note that the channel capacity in Eq.~\eqref{eqn:net_rate} matches the minimal rate identified in \cite{kostina}. Second, given $R >0$, consider a quantization map $\mathcal{H}_{\gamma, R}:\mathbb{R}^d \rightarrow \mathbb{R}^d$ that satisfies the following bound for some $\gamma \in (0,1)$:
$$ \Vert \mathcal{H}_{\gamma, R}(x)-x \Vert \leq \gamma R, \forall x\in \mathbb{R}^d \hspace{2mm} \textrm{such that} \hspace{2mm} \Vert x \Vert \leq R.$$
Inspecting the convergence proof in Appendix~\ref{app:PLproof}, one can show that \emph{any} quantization map $\mathcal{H}_{\gamma,R}(\cdot)$ that satisfies the above contraction property can be used in tandem with \texttt{AQGD}. However, there is a natural trade-off between the complexity of implementing a particular quantization map and the best contraction factor one can hope to achieve with it. For instance, while the scalar quantizer is easier to implement compared to the $\epsilon$-net-based covering scheme described in this section, the latter leads to tighter contraction factors, which then translate to minimal bit-rates.\footnote{For a discussion on constructing such coverings, see \cite{dumer,verger} and the references therein.} Such a trade-off reveals that the \texttt{AQGD} algorithm proposed in this work allows considerable flexibility in terms of the choice of the quantization map.

\section{\texttt{AQGD} under Local Assumptions and with Noisy Gradients}\label{sec:noisy AQDG}
When the system matrices $A$ and $B$ are unknown, directly computing $\nabla J(K)$ for a given $K\in\R^{m\times n}$ is not possible and one may instead estimate $\nabla J(K)$ using the resulting trajectories of system~\eqref{eqn:LTI} when the control policy $K$ is applied. Such an estimate of $\nabla J(\cdot)$ can be viewed as a noisy version of $\nabla J(\cdot)$ (as we will argue in detail in Section~\ref{sec:PG model free LQR}). Thus, in this section, we first adapt our general design and analysis of Algorithm~\ref{algo:AQGD} to the setting when evaluating the gradient $\nabla f(\cdot)$ of the function $f:\R^d\to\R$ incurs noise, and then specialize to the model-free LQR setting in Section~\ref{sec:PG model free LQR}. 

To proceed, let $\widehat{\nabla f(x)}$ denote a noisy gradient of $f(\cdot)$ and we assume the error of $\widehat{\nabla f(x)}$ satisfies
\begin{equation}\label{eqn:upper bound on gradient noise}
\Vert\widehat{\nabla f(x_t)}-\nabla f(x_t)\Vert\le\varepsilon_t,\forall t=0,1,\dots,
\end{equation}
where $x_t$ is chosen in Algorithm~\ref{algo:AQGD} and $\varepsilon_t\in\R_{\ge0}$. As we argued above, Eq.~\eqref{eqn:upper bound on gradient noise} is a general noisy gradient model. We will show next in Section~\ref{sec:PG model free LQR} that when applying Algorithm~\ref{algo:AQGD} to the model-free LQR problem, one can find a noisy gradient $\widehat{\nabla J(\cdot)}$ of the objective function $J(\cdot)$ that fits into the model of Eq.~\eqref{eqn:upper bound on gradient noise}. In particular, the noisy gradient $\widehat{\nabla J(\cdot)}$ given in Section~\ref{sec:PG model free LQR} will be shown to satisfy the condition in Eq.~\eqref{eqn:upper bound on gradient noise} only {\it with high probability}. Nevertheless, our analysis developed in this section can be adapted to account for the high probability upper bound on the error of the noisy gradient $\widehat{\nabla J(\cdot)}$ in the LQR setting. Note also that Eq.~\eqref{eqn:upper bound on gradient noise} allows a time-varying bound $\varepsilon_t$ on the gradient noise, which readily captures the case of a uniform bound (i.e., $\varepsilon_t=\varepsilon$ for all $t\ge0$ and some $\varepsilon\in\bbR_{\ge0}$).

 To adapt Algorithm~\ref{algo:AQGD} to the noisy gradient setting described above, we make the following modifications to the algorithm:
 
$\bullet$ The computation of the innovation $i_t$ in line~5 of Algorithm~\ref{algo:AQGD} is changed into 
\begin{equation}\label{eqn:innovation noisy case}
i_t=\widehat{\nabla f(x_t)}-g_{t-1}.
\end{equation}

$\bullet$ The update rule of the range of the quantizer map in line~10 of Algorithm~\ref{algo:AQGD} is changed into 
\begin{equation}\label{eqn:Range_Update modified}
R_{t+1}=\gamma R_t+\alpha L\Vert g_t\Vert+\varepsilon_{t}+\varepsilon_{t+1},
\end{equation}
where the initial $R_0$ is picked such that $\widehat{\nabla f(x_0)}\le R_0$.

Even though setting $R_{t+1}$ as Eq.~\eqref{eqn:Range_Update modified} requires the knowledge of $\varepsilon_{t+1}$ and $\varepsilon_t$, we will show in the next section that when applying Algorithm~\ref{algo:AQGD} to the model-free LQR setting, one can compute $\varepsilon_t$ based on some prior knowledge of system~\eqref{eqn:LTI} (despite that the actual system matrices $A,B$ are  unknown). In the sequel, we will refer to the modified Algorithm~\ref{algo:AQGD} based on Eqs.~\eqref{eqn:innovation noisy case}-\eqref{eqn:Range_Update modified} as Noisy \texttt{AQGD} (\texttt{NAQGD}). The intuitions behind the above modifications are as follows. First, since the true gradient $f(x_t)$ is not available, the innovation $i_t$ is computed based on the noisy gradient $\widehat{\nabla f(x_t)}$ (rather than $\nabla f(x_t)$), where  $g_{t-1}$ is the gradient estimate at the beginning of iteration $t$ in Algorithm~\ref{algo:AQGD}. Next, since $g_t$ contains an extra error term introduced by the noisy gradient $\widehat{\nabla f(x_t)}$, the range $R_t$ of the quantizer needs to be increased to accommodate this extra error term.

We move on to characterize the convergence of \texttt{NAQGD} described above. Compared to our analysis in Sections~\ref{sec:analysis}, we need to resolve several extra challenges here. First, the noisy gradient $\widehat{\nabla f(x_t)}$ introduces an extra term in the error of the gradient estimate $g_t$ in \texttt{NAQGD}, while the error of the gradient estimate $g_t$ only depends on the quantization step in \texttt{AQGD}. Second, the LQR objective function $J(\cdot)$ defined in~\eqref{eqn:LQR obj J(K)} does not possess the global smoothness and gradient-domination properties as required by Theorem~\ref{thm:PL} (see, e.g., \cite{fazel}). Thus, we also need to extend our analysis in Section~\ref{sec:analysis} to objective functions with "nice" \emph{local} properties which are then applicable to the LQR problem. Finally, as we argued in Section~\ref{sec:LQR setup}, when finding $K^*\in\argmin_{K}J(K)$ using an iterative algorithm (e.g., Algorithm~\ref{algo:AQGD}), 
we need to ensure that all the iterates $K_0,K_1,\dots$ stay in the set of stabilizing controllers. Since $K$ is stabilizing if and only if $J(K)$ is finite \cite{bertsekas2015dynamic}, we impose a sublevel set of $f(\cdot)$ as a feasible set in the general problem. We introduce the following definition.
\begin{definition}\label{def:local smooth}(\textbf{Locally Smooth}) A function $f:\mathbb{R}^d\to\mathbb{R}$ is said to be locally $(L,D)$-smooth over $\mathcal{X}\subseteq\mathbb{R}^d$ if $\|\nabla f(x)-\nabla f(y)\| \le L\|x-y\|$ for all $x\in\mathcal{X}$ and all $y\in\mathbb{R}^d$ with $\|y-x\| \le D$.
\end{definition}

We now present our main result of this section; the detailed proof is included in Appendix~\ref{app:proofs for NAQGD}.

\begin{theorem}\label{thm:noisy gradient} (\textbf{Convergence of \texttt{NAQGD} under Local Properties}) Consider $f:\bbR^d\to\bbR_{\ge0}$ and $\calX=\{x\in\bbR^d:f(x)\le v\}$, where $v\in\bbR_{\ge0}$. Suppose $f(\cdot)$ is locally $(L,D)$-smooth over $\calX$, $\Vert\nabla f(x)\Vert\le G$ for all $x\in\calX$, and $f(\cdot)$ satisfies the following local gradient-domination property:
\begin{equation}
 \Vert \nabla f(x) \Vert^2 \geq 2\mu (f(x)-f(x^*)), \forall x \in\mathcal{X},
\end{equation}
where $x^* \in \argmin_{x\in \mathcal{X}} f(x)$. Suppose \texttt{NAQGD} is initialized with $x_0\in\mathbb{R}^d$ such that $f(x_0)\le v/4$, and run with step-size $\alpha\le\min\{D/(7G),1/(6L)\}$. Moreover, suppose the gradient noise $\varepsilon_t$ in Eq.~\eqref{eqn:upper bound on gradient noise} satisfies that for any $t\ge0$, $\varepsilon_t\le G$ and $\bar{\varepsilon}_t^2\le v\mu/60$, where
\begin{equation}\label{eqn:def of bar epsilon_t}
\bar{\varepsilon}_t^2\triangleq\max_{s\in\{0,\dots,t\}}\left\{\left(1-\frac{\alpha\mu}{3}\right)^{t-s}(8\varepsilon_{s-1}^2+6\varepsilon_{s}^2)\right\},
\end{equation}
with $\varepsilon_{s}\triangleq0$ if $s<0$. Let the contraction factor $\gamma$ be the same as in Theorem~\ref{thm:PL}. Then, for all $t\ge0$, $f(x_t)\le v/2$ and the following is true: 
\begin{equation}\label{eqn:convergence of NAQGD}
f(x_{t})-f(x^*)\le\max\left\{\left(1-\frac{\alpha\mu}{3}\right)^{t}(f(x_0)-f(x^*)),\frac{15\bar{\varepsilon}_{t-1}^2}{\mu}\right\}.
\end{equation}
\end{theorem}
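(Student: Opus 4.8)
The plan is to prove both assertions — the sublevel-set invariant $f(x_t)\le v/2$ and the convergence bound Eq.~\eqref{eqn:convergence of NAQGD} — simultaneously by a single coupled induction on $t$, using the same Lyapunov function as in the proof of Theorem~\ref{thm:PL}, namely $V_t \triangleq z_t + \alpha R_t^2$ with $z_t = f(x_t)-f(x^*)$. At each step I would carry three invariants: (a) $f(x_t)\le v/2$, so that $x_t\in\calX$ and both local smoothness and local gradient-domination may be invoked at $x_t$; (b) the step is short, $\|x_{t+1}-x_t\| = \alpha\|g_t\|\le D$, so that local smoothness (Definition~\ref{def:local smooth}) applies to the pair $(x_t,x_{t+1})$; and (c) a Lyapunov bound on $V_t$ from which Eq.~\eqref{eqn:convergence of NAQGD} follows via $z_t\le V_t$. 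As in Theorem~\ref{thm:PL}, $V_t$ is designed to couple the optimization error $z_t$ with the squared quantizer range $R_t^2$, which under \texttt{NAQGD} must additionally absorb the gradient-noise terms injected through Eq.~\eqref{eqn:Range_Update modified}.

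The per-step work, performed under the hypothesis that (a)--(b) hold up to time $t$, has three ingredients. First I would establish the noisy analog of Lemma~\ref{lemma:quant} by writing $e_t=\nabla f(x_t)-g_t$ and decomposing the innovation of Eq.~\eqref{eqn:innovation noisy case} as $i_t = (\widehat{\nabla f(x_t)}-\nabla f(x_t)) + (\nabla f(x_t)-\nabla f(x_{t-1})) + e_{t-1}$; the three pieces are bounded by $\varepsilon_t$, by $\alpha L\|g_{t-1}\|$ via local smoothness, and by $\gamma R_{t-1}+\varepsilon_{t-1}$ respectively, and their sum is \emph{exactly} $R_t$ as defined by Eq.~\eqref{eqn:Range_Update modified} — which is precisely how that update was chosen. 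This yields the no-overflow guarantee $\|i_t\|\le R_t$, hence the quantizer contraction $\|i_t-\tilde{i}_t\|\le\gamma R_t$, and therefore $\|e_t\|\le\gamma R_t+\varepsilon_t$. Second, using $\|g_t\|\le\|\nabla f(x_t)\|+\|e_t\|$ and $\alpha L\le1$, I would square Eq.~\eqref{eqn:Range_Update modified} to obtain the noisy analog of Lemma~\ref{lemma:range}, of the form $R_{t+1}^2\le c_1\gamma^2 R_t^2 + c_2\alpha^2 L^2\|\nabla f(x_t)\|^2 + c_3(\varepsilon_t^2+\varepsilon_{t+1}^2)$. Third, I would apply the descent inequality for the locally smooth $f$ to $x_{t+1}=x_t-\alpha(\nabla f(x_t)-e_t)$, split the cross term $\alpha\langle\nabla f(x_t),e_t\rangle$ by Young's inequality, and invoke gradient-domination $\|\nabla f(x_t)\|^2\ge2\mu z_t$ to get $z_{t+1}\le(1-c\alpha\mu)z_t - c'\alpha\|\nabla f(x_t)\|^2 + c''\|e_t\|^2$.

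Combining the three ingredients with the choice $\gamma=\tfrac13(1-1/\kappa)$ from Theorem~\ref{thm:PL} — exactly the value that forces the $\gamma^2 R_t^2$ terms to contract — and relaxing the contraction rate slightly to $1-\alpha\mu/3$ to leave room for the noise cross-terms, yields a one-step Lyapunov recursion of the max type, $V_{t+1}\le\max\{(1-\alpha\mu/3)V_t,\ \tfrac{15}{\mu}(8\varepsilon_{t-1}^2+6\varepsilon_t^2)\}$, obtained from the additive recursion by the standard "contract-or-stay-near-the-floor" dichotomy. Unrolling this max-recursion produces $V_t\le\max\{(1-\alpha\mu/3)^t V_0,\ \tfrac{15}{\mu}\max_{s}(1-\alpha\mu/3)^{t-1-s}(8\varepsilon_{s-1}^2+6\varepsilon_s^2)\}$, and the inner max is precisely $\bar{\varepsilon}_{t-1}^2$ of Eq.~\eqref{eqn:def of bar epsilon_t} (the constants $8$ and $6$ tracing back to the squaring step); extracting $z_t\le V_t$ and controlling the initial-range contribution $\alpha R_0^2$ through the choice of $R_0$ delivers Eq.~\eqref{eqn:convergence of NAQGD}. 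To close the induction I would then verify invariant (a): since $f(x_0)\le v/4$ and $f(x^*)\le f(x_0)$, the first branch is at most $v/4$, while the standing assumption $\bar{\varepsilon}_t^2\le v\mu/60$ makes the second branch $15\bar{\varepsilon}_t^2/\mu\le v/4$, so $z_{t+1}\le v/4$ and hence $f(x_{t+1})\le f(x^*)+v/4\le v/2$; invariant (b) follows from $\|\nabla f(x_t)\|\le G$ on $\calX$, $\varepsilon_t\le G$, a uniform bound on $R_t$ read off from $\alpha R_t^2\le V_t\le V_0$, and the step-size choice $\alpha\le D/(7G)$.

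The main obstacle is the circularity binding invariants (a)--(b) to (c): the one-step contraction, and even the no-overflow claim $\|i_t\|\le R_t$, presuppose that local smoothness and gradient-domination hold at $x_t$ and across the step to $x_{t+1}$, yet these hold only because $x_t\in\calX$ and $\|x_{t+1}-x_t\|\le D$, which are themselves consequences of the Lyapunov bound being proved. Threading this needle requires calibrating the numerical slack — the gaps among $v/4$, $v/2$, and $v$, the noise budget $\bar{\varepsilon}^2\le v\mu/60$, and the constant $7$ in $\alpha\le D/(7G)$ — so that the induced bound on $R_t$ (hence on $\|g_t\|$ and the step length) holds with margin at every iteration. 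Making all of these constants close \emph{simultaneously}, rather than any single inequality in isolation, is the delicate heart of the argument.
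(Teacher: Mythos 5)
Your proposal reproduces the paper's overall architecture faithfully: the coupled induction on feasibility, no-overflow, and a Lyapunov bound; the three-term decomposition of $i_t$ whose pieces sum exactly to the range update \eqref{eqn:Range_Update modified}; the squared-range recursion; the local-smoothness descent step; and the ``contract-or-stay-near-the-floor'' dichotomy unrolled into the max-form bound. One structural difference: you use the single-step potential $V_t=z_t+\alpha R_t^2$ from Theorem~\ref{thm:PL}, whereas the paper uses the two-step potential $V_t=z_t+z_{t-1}+\alpha R_{t-1}^2$ (with $z_{-1}=R_{-1}=0$). The two-step form lets the paper absorb the $\|\nabla f(x_{t-1})\|^2$ term produced by the recursion for $R_t^2$ into the descent inequality at step $t-1$, makes $V_0=z_0$ so that no $\alpha R_0^2$ appears in the final bound, and is what produces the specific constants and index pattern $8\varepsilon_{s-1}^2+6\varepsilon_s^2$ in \eqref{eqn:def of bar epsilon_t}. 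Your single-step version can likely be pushed through, but the noise terms entering at step $t\to t+1$ would be $\varepsilon_t^2$ and $\varepsilon_{t+1}^2$ (not $\varepsilon_{t-1}^2$ and $\varepsilon_t^2$ as you claim), and the first branch would carry an extra $\alpha R_0^2$; so you would prove a slightly different statement than \eqref{eqn:convergence of NAQGD} unless you rework the bookkeeping.

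The genuine gap is in how you close invariant (b), which you yourself flag as the delicate heart of the argument. You propose to obtain a uniform bound on $R_t$ ``read off from $\alpha R_t^2\le V_t\le V_0$.'' This does not work: the Lyapunov recursion only gives $V_t\le\max\{V_0,\,15\bar{\varepsilon}_{t-1}^2/\mu\}\le\max\{V_0,\,v/4\}$, so the best you extract is $R_t\lesssim\sqrt{v/(4\alpha)}+O(G)$, and nothing in the hypotheses ties $v$ to $\alpha G^2$ — for small $\alpha$ this bound is arbitrarily large relative to $G$, so $\alpha\Vert g_t\Vert\le\alpha(\gamma R_t+\varepsilon_t+G)\le D$ does not follow from $\alpha\le D/(7G)$. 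The paper instead bounds $R_t$ by unrolling the \emph{linear} recursion $R_{t+1}\le 2\gamma R_t+\alpha LG+2\varepsilon_t+\varepsilon_{t+1}$ directly (using $2\gamma\le 2/3$, $\varepsilon_t\le G$, $R_0\le\varepsilon_0+G$, $\alpha L\le 1/6$) to get $R_t\le 14G$, hence $\Vert e_t\Vert\le\gamma R_t+\varepsilon_t\le 6G$ and $\Vert x_{t+1}-x_t\Vert\le\alpha(6G+G)\le D$ — which is precisely where the constant $7$ in the step-size condition originates. You have all the ingredients for this route (the recursion, $\gamma\le 1/3$, $\varepsilon_t\le G$), but the route you actually propose for the step-length invariant fails, and since the innovation bound $\Vert i_{t+1}\Vert\le R_{t+1}$ and the descent inequality both presuppose that invariant, the induction does not close as written.
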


\textbf{Discussion.} While the results in \cite{kostina} and Section~\ref{sec:analysis} hold for objective functions that enjoy global smoothness and strong convexity (or gradient domination) properties, the result in Theorem~\ref{thm:noisy gradient} holds more generally for functions that exhibit such properties only locally on the feasible set $\calX=\{x\in\R^d:f(x)\le v\}$.\footnote{Note that assuming $\varepsilon_t=0$ for all $t\ge0$, the \texttt{NAQGD} algorithm reduces to \texttt{AQGD} introduced in Section~\ref{sec:AQGD}.} More importantly, Theorem~\ref{thm:noisy gradient} holds for scenarios with noisy gradients that satisfy the bound in  Eq.~\eqref{eqn:upper bound on gradient noise}. The standard gradient-descent method (without any quantization) has also been studied under the noisy gradient scenario in, e.g., \cite{cassel2021online}, where $f(x_{t+1})=f(x_t)-\alpha\widehat{\nabla f(x_t)}$. Interestingly, despite the quantization step,  our algorithm (i.e., \texttt{NAQGD}) achieves the same convergence performance - as given by Eq.~\eqref{eqn:convergence of NAQGD} (up to universal constants) - as that in \cite{cassel2021online}. Elaborating more on Eq.~\eqref{eqn:convergence of NAQGD}, the overall convergence result is characterized by a term with linear convergence and a term that depends on the gradient noise $\varepsilon_t$. Thus,  an overall linear convergence result is achievable provided that the gradient noise $\varepsilon_t$ decays exponentially with $t$. Alternatively, Eq.~\eqref{eqn:convergence of NAQGD} shows that $f(x_t)$ converges exponentially fast to a neighborhood of $f(x^*)$ whose range is characterized by $\bar{\varepsilon}_t$. Finally, it is worth pointing out the following immediate corollary of Theorem~\ref{thm:noisy gradient} when the algorithm takes exact gradients.
\begin{corollary}\label{coro:AQGD local}
Suppose $\varepsilon_t$ in Eq.~\eqref{eqn:upper bound on gradient noise} satisfies $\varepsilon_t=0$ for all $t\ge0$ and consider the same settings as those in Theorem~\ref{thm:noisy gradient}. Then, for all $t\ge0$, $f(x_t)\le v/2$ and the following is true: 
\begin{equation*}
f(x_{t})-f(x^*)\le\left(1-\frac{\alpha\mu}{3}\right)^{t}(f(x_0)-f(x^*)).
\end{equation*}
\end{corollary}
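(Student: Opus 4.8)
The plan is to obtain this statement as an immediate specialization of Theorem~\ref{thm:noisy gradient} to the noiseless case, so that no new argument is required beyond checking that the hypotheses of that theorem are met and tracking what its conclusion becomes when $\varepsilon_t=0$. First I would note that setting $\varepsilon_t=0$ for all $t\ge0$ in the noisy-gradient model of Eq.~\eqref{eqn:upper bound on gradient noise} reduces \texttt{NAQGD} exactly to \texttt{AQGD}, and that all remaining assumptions of Theorem~\ref{thm:noisy gradient} (the sublevel-set feasible region $\calX$, local $(L,D)$-smoothness, the gradient bound $G$, local gradient domination, the initialization $f(x_0)\le v/4$, and the step-size restriction $\alpha\le\min\{D/(7G),1/(6L)\}$) are inherited verbatim under the phrase ``the same settings.''

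The second step is to verify that the two noise conditions required by Theorem~\ref{thm:noisy gradient} hold vacuously. The bound $\varepsilon_t\le G$ is immediate since $G\ge0$. For the condition $\bar{\varepsilon}_t^2\le v\mu/60$, I would substitute $\varepsilon_s=0$ for every $s$ (including the convention $\varepsilon_s=0$ for $s<0$) into the definition in Eq.~\eqref{eqn:def of bar epsilon_t}; every term inside the maximum then vanishes, so $\bar{\varepsilon}_t=0$ for all $t$, and the requirement $\bar{\varepsilon}_t^2\le v\mu/60$ is trivially satisfied.

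Finally, I would invoke the conclusion of Theorem~\ref{thm:noisy gradient}. Because $\bar{\varepsilon}_{t-1}=0$, the second argument of the maximum in Eq.~\eqref{eqn:convergence of NAQGD}, namely $15\bar{\varepsilon}_{t-1}^2/\mu$, is zero. Since $x_0\in\calX$ (as $f(x_0)\le v/4\le v$) and $x^*\in\argmin_{x\in\calX}f(x)$, we have $f(x_0)-f(x^*)\ge0$, so the first argument $(1-\alpha\mu/3)^t(f(x_0)-f(x^*))$ is nonnegative; hence the maximum collapses to this first term, giving exactly the claimed bound. The guarantee $f(x_t)\le v/2$ for all $t$ transfers directly from Theorem~\ref{thm:noisy gradient}. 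There is no genuine obstacle here: the corollary is a pure specialization, and the only points worth spelling out are the vanishing of $\bar{\varepsilon}_t$ and the nonnegativity of $f(x_0)-f(x^*)$, which together make the maximum degenerate to the linear-rate term.
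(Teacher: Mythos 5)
Your proposal is correct and matches the paper's treatment: the paper presents this corollary as an immediate specialization of Theorem~\ref{thm:noisy gradient} with $\varepsilon_t=0$ (noting in a footnote that \texttt{NAQGD} then reduces to \texttt{AQGD}), which is exactly your argument. Your explicit checks that $\bar{\varepsilon}_t=0$ and that the maximum in Eq.~\eqref{eqn:convergence of NAQGD} collapses to the nonnegative linear-rate term are the right details to verify.
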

Corollary~\ref{coro:AQGD local} can be viewed as a generalization of Theorem~\ref{thm:AQGD} when \texttt{AQGD} is applied to objective functions with only local smoothness and gradient-domination properties, and our proposed approach still achieves linear convergence rates.

\textbf{Proof Sketch of Theorem~\ref{thm:noisy gradient}.} Similarly to Theorem~\ref{thm:PL}, the proof relies on a carefully designed potential function $V_t$. In particular, since noisy gradients $\widehat{f(x_t)}$ are used in \texttt{NAQGD}, the choice of $V_t$ also depends on the noise level $\varepsilon_t$ of $\widehat{f(x_t)}$. In addition, to tackle the extra challenges introduced by the local properties of the objective function $f(\cdot)$, when upper bounding the innovation $i_t$ and the gradient error $e_t=g_t-\nabla f(x_t)$, we need a more careful induction argument to prove the desired convergence result. 

\section{Application to the Model-Free LQR}
\label{sec:PG model free LQR}
We are now in place to apply the modified Algorithm~\ref{algo:AQGD} described above (i.e., \texttt{NAQGD}) to the model-free LQR case, i.e., when the agent (depicted in Fig.~\ref{fig:Setup}) does not have access to the system model $A,B$ and thus {\it cannot} evaluate the true gradient $\nabla J(K)$ at any controller $K\in\R^{m\times n}$. For notational simplicity in the sequel, let us introduce the following:
\begin{align}
\beta_0 I\preceq R\preceq \beta_1 I,\ \beta_0 I\preceq Q\preceq \beta_1 I,\ \Sigma_w\succeq \sigma^2_w I,\ \Vert B \Vert \le\psi,\ J(K^*)\le \frac{J}{4},\label{eqn:parameters in J(K)}
\end{align}
where $\beta_0,\beta_1,\sigma_w,J\in\mathbb{R}_{>0}$, $\psi\in\mathbb{R}_{\ge1}$, and $J(K^*)$ is the optimal cost to problem~\eqref{eqn:LQR obj J(K)}. Moreover, we assume without loss of generality that $\beta_1\le1$ (since one may always scale the cost matrices $Q,R$ by a positive real number). In addition, we construct the following sublevel set of the LQR objective function $J(\cdot)$: 
\begin{equation}\label{eqn:feasible set for J(K)}
\mathcal{K}=\{K\in\mathbb{R}^{m\times n}:J(K)\le J\},
\end{equation}
and impose $\mathcal{K}$ as the feasible set of $J(\cdot)$. We will use the following lemma in our analysis.
\begin{lemma}\label{lemma:properties of J(K)} The objective $J(\cdot)$ in problem~\eqref{eqn:LQR obj J(K)} and $\calK$ defined in Eq.~\eqref{eqn:feasible set for J(K)} satisfy: \\
\noindent(a) \cite[Lemma~1]{fazel}\&\cite[Lemma~40]{cassel2020logarithmic} For any $K\in\mathcal{K}$, the gradient of $J(K)$ satisfies $\nabla J(K)=2\big((R+B^{\top}P_KB)K+B^{\top}P_KA\big)\Sigma_K$, where $P_K,\Sigma_K$ are given in~\eqref{eqn:DARE} and $\Vert P_K\Vert\le 2\beta_1\zeta^4/(1-\eta)$.\\
\noindent(b) \cite[Lemma~41]{cassel2020logarithmic} For any $K\in\mathcal{K}$, it holds that $\Vert (A+BK)^k \Vert\le\zeta\eta^k$ for all $k\in\mathbb{Z}_{\ge0}$ and $\Vert K \Vert \le\zeta$, where $\zeta\triangleq\sqrt{J/(\beta_0\sigma_w^2)}$ satisfies $\zeta\ge1$ and $\eta\triangleq1-1/(2\zeta^2)$.\\
\noindent(c) \cite[Lemma~25]{fazel} For any $K\in\mathcal{K}$, it holds that $\Vert \nabla J(K)\Vert_F\le G= \frac{2J}{\beta_0\sigma_w^2}\sqrt{(\sigma_w^2+\psi^2J)J}$.\\
\noindent(d) \cite[Lemma~5]{cassel2021online} Let $D=1/(\psi\zeta^3)$. Then, $J(\cdot)$ is $(D,L)$-locally smooth with $L=112\sqrt{n}J\psi^2\zeta^8/\beta_0$, i.e., $\Vert \nabla J(K^{\prime})-\nabla J(K)\Vert_F\le L \Vert K^{\prime}-K\Vert_F$,  
for all $K\in\mathcal{K}$ and all $K^{\prime}\in\mathbb{R}^{m\times n}$ with $\Vert K^{\prime}-K\Vert_F \le D$; $J(\cdot)$ is $(D,\bar{G})$-locally Lipschitz with $\bar{G}=4\psi J\zeta^7/\beta^0$, i.e., $|J(K^{\prime})-J(K)|\le\bar{G}\Vert K^{\prime}-K\Vert_F$ for all $K\in\calK$ and all $K^{\prime}\in\R^{m\times n}$ with $\Vert K^{\prime}-K\Vert_F\le D$.  \\
\noindent(e) \cite[Lemma~11]{fazel} $J(\cdot)$ satisfies the gradient-domination property with $\mu=2J/\zeta^4$, i.e., $\Vert \nabla J(K)\Vert^2_F\ge 2\mu(J(K)-J(K^*))$ for all $K\in\calK$, where $K^*=\argmin_{K\in\calK}J(K)$.
\end{lemma}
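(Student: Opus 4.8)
The plan is to treat Lemma~\ref{lemma:properties of J(K)} as a consolidation of known facts about the LQR landscape from \cite{fazel,cassel2020logarithmic,cassel2021online}: each of the parts (a)--(e) is, at its core, a restatement of a cited result, so the genuine work lies in (i) expressing the generic constants appearing in those references in terms of the explicit problem parameters $\beta_0,\beta_1,\sigma_w,\psi,J$ collected in Eq.~\eqref{eqn:parameters in J(K)}, and (ii) verifying that every bound holds \emph{uniformly} over the sublevel set $\mathcal{K}=\{K:J(K)\le J\}$. The foundational building block is part (b), from which the operator-norm bounds needed in (a) and (c) follow by series expansions; I would therefore establish (b) first and then cascade.

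For part (b), I would start from the Riccati identity in Eq.~\eqref{eqn:DARE}, $P_K = Q + K^{\top} R K + (A+BK)^{\top} P_K (A+BK)$, which gives $P_K \succeq K^{\top} R K \succeq \beta_0 K^{\top} K$. Combining the closed form $J(K)=\trace(P_K\Sigma_w)$ from Eq.~\eqref{eqn:expression for J(K)} with $\Sigma_w\succeq\sigma_w^2 I$ yields $J(K)\ge \sigma_w^2\trace(P_K)\ge\sigma_w^2\Vert P_K\Vert$, so for $K\in\mathcal{K}$ one obtains $\Vert K\Vert^2\le \Vert P_K\Vert/\beta_0\le J/(\beta_0\sigma_w^2)=\zeta^2$, i.e.\ $\Vert K\Vert\le\zeta$; the normalization $\zeta\ge1$ follows since $J\ge 4J(K^*)\ge 4\beta_0\sigma_w^2$ (using $P_{K^*}\succeq Q\succeq\beta_0 I$). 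The geometric decay $\Vert(A+BK)^k\Vert\le\zeta\eta^k$ with $\eta=1-1/(2\zeta^2)$ is then the quantitative stability statement of \cite[Lemma~41]{cassel2020logarithmic}, obtained by controlling the closed-loop matrix through the Lyapunov solution $P_K$; I would invoke it directly.

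Granting (b), the remaining parts are substitutions. For (a), the gradient formula $\nabla J(K)=2((R+B^{\top} P_K B)K+B^{\top} P_K A)\Sigma_K$ is the standard policy-gradient expression from \cite[Lemma~1]{fazel}; the bound $\Vert P_K\Vert\le 2\beta_1\zeta^4/(1-\eta)$ I would obtain from the series representation $P_K=\sum_{k\ge0}(A+BK)^{\top k}(Q+K^{\top} R K)(A+BK)^k$, using $\Vert Q+K^{\top} R K\Vert\le\beta_1+\beta_1\zeta^2\le 2\beta_1\zeta^2$, the decay from (b) giving $\sum_k\eta^{2k}\le 1/(1-\eta)$, and collecting terms. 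Parts (c), (d), (e) --- the Frobenius gradient bound $G$, the local $(L,D)$-smoothness and local Lipschitzness with constants $D,\bar G$, and the gradient-domination constant $\mu=2J/\zeta^4$ --- follow the same recipe: take the respective statements from \cite[Lemma~25]{fazel}, \cite[Lemma~5]{cassel2021online}, and \cite[Lemma~11]{fazel}, feed in the uniform bounds $\Vert K\Vert\le\zeta$, $\Vert P_K\Vert\lesssim\beta_1\zeta^6$, and $\Vert\Sigma_K\Vert\le J(K)/\beta_0\le J/\beta_0$ (the last from the alternative closed form $J(K)=\trace((Q+K^{\top} R K)\Sigma_K)$ in Eq.~\eqref{eqn:expression for J(K)} together with $Q\succeq\beta_0 I$), and simplify using $\zeta\ge1$ and $\beta_1\le1$.

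I expect the main obstacle to be bookkeeping rather than conceptual: the references \cite{fazel}, \cite{cassel2020logarithmic}, and \cite{cassel2021online} use different normalizations and slightly different cost conventions, so the delicate step is to verify that their constants transcribe correctly into the single parameterization $(\zeta,\eta)$ used here, and that the sublevel-set hypothesis $K\in\mathcal{K}$ supplies exactly the uniform control on $\Vert K\Vert$, $\Vert P_K\Vert$, and $\Vert\Sigma_K\Vert$ that each cited lemma requires. Particular care is needed in propagating the normalization $\beta_1\le1$ and the relation $1/(1-\eta)=2\zeta^2$ through the powers of $\zeta$ so that the stated constants (e.g.\ the $\zeta^8$ in $L$ and the $\zeta^7$ in $\bar G$) emerge exactly as claimed.
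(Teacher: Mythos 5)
Your proposal is correct and takes essentially the same approach as the paper: Lemma~\ref{lemma:properties of J(K)} is stated there purely as a consolidation of the cited results from \cite{fazel}, \cite{cassel2020logarithmic}, and \cite{cassel2021online}, with no independent proof given beyond the references and the parameter translation via $\zeta$, $\eta$, and Eq.~\eqref{eqn:parameters in J(K)}. The extra derivations you supply --- $\Vert K\Vert\le\zeta$ from $P_K\succeq\beta_0 K^{\top}K$ together with $J(K)=\trace(P_K\Sigma_w)\ge\sigma_w^2\Vert P_K\Vert$, and $\Vert P_K\Vert\le2\beta_1\zeta^4/(1-\eta)$ from the Lyapunov series with $\Vert Q+K^{\top}RK\Vert\le2\beta_1\zeta^2$ --- are sound and consistent with how the cited lemmas obtain these constants.
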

Note that Lemma~\ref{lemma:properties of J(K)}(c)-(e) show that the LQR objective function $J(\cdot)$ satisfies the local properties required by Theorem~\ref{thm:noisy gradient} over the set $\calK$. Note also that the additional local Lipschitz property of $J(\cdot)$ will play a role when analyzing the noisy gradient of $J(\cdot)$ as we elaborate next.

Lemma~\ref{lemma:properties of J(K)}(a) provides a closed-form expression of $\nabla J(K)$ which however depends on the system matrices $A$ and $B$. For unknown $A$ and $B$, we introduce in Algorithm~\ref{algo:gradient estimate} a noisy gradient oracle of $J(K_t)$ based solely on the observed system trajectories of system~\eqref{eqn:LTI} when the control policy $K_t$ is applied; similar noisy gradient oracles for model-free LQR have been considered in, e.g., \cite{cassel2021online,fazel,li2022distributed}. Specifically, for any iteration $t=0,1,\dots,$ of \texttt{NAQGD}, the worker agent obtains $\widehat{\nabla J(K_t)}$ from Algorithm~\ref{algo:gradient estimate} by playing $u_{t,k}^i=(K_t+U_t^i)x_{t,k}^i$ and observing $x_{t,k}^i$ for  $k=0,1,\dots,N_t-1$ from $\ell_t$ trajectories of system~\eqref{eqn:LTI} with length $N_t$, where $x_{t,k+1}^i=Ax_{t,k}^i+Bu_{t,k}^i+w_{t,k}^i$ and $U_t^i$ is a random purturbation.\footnote{Note that we use $t$, $i$ and $k$ to index an iteration of \texttt{NAQGD}, a trajectory of system~\eqref{eqn:LTI} and a time step in the trajectory used in Algorithm~\ref{algo:gradient estimate}, respectively. We assume that $w_{t,k}^i\overset{i.i.d.}\sim\calN(0,\Sigma_w)$ $\forall i\in[\ell_t]$, $\forall k\in\{0,\dots,N_t-1\}$ and $\forall t\in\{0,1,\dots\}$.} The following remark discusses some important aspects regarding the implementation and analysis of \texttt{NAQGD} when applied to solve model-free LQR. 

\begin{remark}\label{remark:random smoothing}As we will show in our proof,
Algorithm~\ref{algo:gradient estimate} relies on the random perturbation by $U_t^i$ to ensure the bound on the noisy gradient $\widehat{\nabla J(K_t)}$ required by Eq.~\eqref{eqn:upper bound on gradient noise} and Theorem~\ref{thm:noisy gradient}. Such random perturbations are typical in gradient-based methods without access to the true gradients \cite{flaxman2005online,nesterov2017random}. Note that both $K$ and $\nabla J(K)$ are matrices in $\mathbb{R}^{m\times n}$, while Algorithm~\ref{algo:AQGD} is designed for vectors $x,\nabla f(x)\in\mathbb{R}^d$. Nonetheless, one can first vectorize $K$ and $\nabla J(K)$ to obtain vectors in $\mathbb{R}^{m \times n}$, and then apply Algorithm~\ref{algo:AQGD} to achieve the desired convergence performance. Finally, using the results in \cite[Section~3.3.3]{vershynin}, one can show that the random matrices $U_t^i\in\R^{m\times n}$ obtained as line~3 of Algorithm~\ref{algo:gradient estimate} (or the vectorized $U_t^i$ in $\R^{mn\times1}$) are equal in distribution to a random vector sampled uniformly from the $nm$-dimensional unit sphere (i.e., $\{u:\Vert u\Vert=1\}$). 
\end{remark}

\begin{algorithm}[t]
\caption{Compute $\widehat{\nabla J(K_t)}$ for iteration $t$}
\label{algo:gradient estimate}  
\begin{algorithmic}[1]
\Statex\textbf{Input:} controller $K_t$, number of trajectories $\ell_t$, trajectory length $N_t$, parameter $r$.
\For{$i=1,\dots,\ell_t$}
\State \textbf{Initialization:} $x_{t,0}^i=0$.
\State Sample $\widetilde{U}_t^i$ with i.i.d. entries from $\calN(0,1)$ and normalize $U_t^i=\widetilde{U}_t^i/\Vert\widetilde{U}_t^i\Vert_F$.
\State Play $u_{t,k}^i=(K_t+rU_t^i)x_{t,k}^i$ and observe $x_{t,k}^i$ for all $k=0,\dots,N_t-1$.
\State Compute $\widehat{J}_i=\frac{mn}{rN_t}\sum_{k=0}^{N_t-1}(x_{t,k}^{i\top}Qx_{t,k}^i+u_{t,k}^{i\top}Ru_{t,k}^i)U_t^i$.
\EndFor
\Statex {\bf Output:} $\widehat{\nabla J(K_t)}=\frac{1}{\ell_t}\sum_{i=1}^{\ell_t}\widehat{J}_i$.
\end{algorithmic}
\end{algorithm}

We now state our result for \texttt{NAQGD} when applied to solve model-free LQR; the detailed proof of Theorem~\ref{thm:noisy gradient for LQR} is included in Appendix~\ref{app:proofs for NAQGD}.
\begin{theorem}
\label{thm:noisy gradient for LQR}
Consider the same setting as Theorem~\ref{thm:noisy gradient} with the parameters $D,G,L,\mu$ of the LQR objective $J(\cdot)$ given by Lemma~\ref{lemma:properties of J(K)} and $\calK$ given by Eq.~\eqref{eqn:feasible set for J(K)}. Suppose \texttt{NAQGD} is applied to solve $\min_{K\in\calK}J(K)$ with $\widehat{\nabla J(K_t)}$ given by Algorithm~\ref{algo:gradient estimate} for $t=0,\dots,T-1$, and initialized with $J(K_0)\le J/4$. Let the step-size $\alpha$ and the contraction factor $\gamma$ be the same as Theorem~\ref{thm:noisy gradient}. For any $0<\delta<1$, let 
\begin{multline}\label{eqn:epsilon_t for LQR}
\varepsilon_t=Lr+\frac{mnJ}{r\sqrt{\ell_t}}\sqrt{8\log\frac{45T}{\delta}}+\frac{10mn\beta_1\zeta^4\trace(\Sigma_w)}{3TN_tr(1-\eta)}\log\frac{27TN_t}{\delta}\\
+\frac{10mn\beta_1\zeta^2\trace(\Sigma_w)}{r(1-\eta)^2}\log\frac{3N_t\ell_tT}{\delta}\Big(\frac{1+\zeta^2}{\sqrt{\ell_t}}\sqrt{2\log\frac{45T}{\delta}}+\frac{\zeta^4}{N_t(1-\eta)}\Big),
\end{multline}
where $r$ is chosen to satisfy $r\le\min\{J/(2\bar{G}),D\}$. Moreover, suppose $N_t$ and $\ell_t$ are chosen such that $\varepsilon_t\le G$ and $15\bar{\varepsilon}_t^2/\mu\le J/4$ for all $t\in\{0,\dots,T-1\}$, where $\bar{\varepsilon}_t$ is defined as Eq.~\eqref{eqn:def of bar epsilon_t} using $\varepsilon_t$ given above. Then, with probability at least $1-\delta$, $J(K_t)\le J/2$ and 
\begin{equation}\label{eqn:convergence of model-free LQR}
J(K_{t})-J(K^*)\le\max\left\{\left(1-\frac{\alpha\mu}{3}\right)^{t}(J(K_0)-J(K^*)),\frac{15\bar{\varepsilon}_{t-1}^2}{\mu}\right\},\forall t\in\{0,\dots,T-1\},
\end{equation}
where $K^*\in\arg\min_{K\in\calK}J(K)$.
\end{theorem}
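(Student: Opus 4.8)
The plan is to reduce the entire statement to Theorem~\ref{thm:noisy gradient}, whose local hypotheses are already certified for the LQR cost by Lemma~\ref{lemma:properties of J(K)}: parts (c)--(e) supply local smoothness, a uniform gradient bound $G$, and the gradient-domination constant $\mu = 2J/\zeta^4$ over the sublevel set $\calK$, so we apply Theorem~\ref{thm:noisy gradient} with $v = J$. Following Remark~\ref{remark:random smoothing}, I would vectorize $K$ and $\nabla J(K)$ so that the Frobenius norm becomes the Euclidean norm and Theorem~\ref{thm:noisy gradient} transfers verbatim. The remaining burden is then to show that the zeroth-order oracle of Algorithm~\ref{algo:gradient estimate} meets the per-iteration accuracy requirement $\Vert \widehat{\nabla J(K_t)} - \nabla J(K_t) \Vert_F \le \varepsilon_t$, with $\varepsilon_t$ as in Eq.~\eqref{eqn:epsilon_t for LQR}, simultaneously for all $t$, on an event of probability at least $1-\delta$.

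First I would decompose the oracle error into a deterministic smoothing bias plus a stochastic estimation error. Writing $J_r(K) = \E_{V}[J(K + rV)]$ for the ball-smoothed cost ($V$ uniform on the unit ball), the one-point estimator $\widehat{J}_i$ --- which uses perturbations $U_t^i$ drawn uniformly from the sphere (Remark~\ref{remark:random smoothing}) --- is built so that its idealized expectation (infinite horizon, taken over $U_t^i$ and the disturbances) equals $\nabla J_r(K_t)$, the standard identity for sphere perturbations. Local $(L,D)$-smoothness from Lemma~\ref{lemma:properties of J(K)}(d), applicable because $r \le D$, then yields $\Vert \nabla J_r(K_t) - \nabla J(K_t) \Vert_F \le Lr$, which is exactly the first term of $\varepsilon_t$. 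The remaining three terms must arise from bounding the statistical error $\Vert \widehat{\nabla J(K_t)} - \nabla J_r(K_t)\Vert_F$ of the empirical average over the $\ell_t$ rollouts.

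The hard part is this statistical error, which has two distinct origins and contains the bulk of the analysis. (i) Finite trajectory length $N_t$: each $\widehat{J}_i$ uses the empirical time-average of the quadratic stage cost over $N_t$ steps instead of the true infinite-horizon average. Stability of $A + B(K_t + rU_t^i)$ --- guaranteed by Lemma~\ref{lemma:properties of J(K)}(b), since the perturbed controller stays in $\calK$ when $r$ is small --- makes the state covariance converge geometrically at a rate governed by $\zeta$ and $\eta$, so the truncation bias decays like $\zeta^4 \trace(\Sigma_w)/(N_t(1-\eta))$; this produces the terms carrying the $1/N_t$ factors. (ii) Finite trajectory count $\ell_t$: the per-trajectory estimates are products of a bounded perturbation direction with quadratic forms in Gaussian-driven states, hence sub-exponential, and concentrating their empirical mean via Bernstein-type tail bounds yields the $1/\sqrt{\ell_t}$ terms. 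The $\sqrt{\log(T/\delta)}$ and $\log(\cdot/\delta)$ factors are the price of a union bound over $t \in \{0,\dots,T-1\}$ and over the individual trajectories and time steps. Tracking the constants through these arguments --- using $\Vert(A+BK)^k\Vert \le \zeta\eta^k$ and the $\Vert P_K\Vert$ bound of Lemma~\ref{lemma:properties of J(K)}(a),(b) --- is what reproduces the exact form of Eq.~\eqref{eqn:epsilon_t for LQR}.

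Finally I would close the loop between ``the oracle is accurate'' and ``the iterates stay feasible,'' since the bounds above only make sense when $K_t \in \calK$. I would define the good event on which every per-iteration concentration inequality holds (probability $\ge 1-\delta$ by the union bound), and on this event argue by induction on $t$ that $J(K_t) \le J/2$. The base case is the initialization $J(K_0) \le J/4$; for the inductive step, feasibility of $K_t$ validates the bound $\varepsilon_t$, whereupon the hypotheses $\varepsilon_t \le G$ and $15\bar{\varepsilon}_t^2/\mu \le J/4$ (which match Theorem~\ref{thm:noisy gradient}'s requirements $\varepsilon_t \le G$ and $\bar{\varepsilon}_t^2 \le v\mu/60$ for $v=J$) let me invoke the one-step potential decrease established inside the proof of Theorem~\ref{thm:noisy gradient} to conclude $J(K_{t+1}) \le J/2$, keeping $K_{t+1} \in \calK$. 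The choice $r \le \min\{J/(2\bar{G}), D\}$ additionally forces each perturbed rollout $K_t + rU_t^i$ to stay in $\calK$, via local Lipschitzness $|J(K_t + rU_t^i) - J(K_t)| \le \bar{G} r \le J/2$. Running this combined induction to completion delivers both $J(K_t) \le J/2$ and the convergence estimate Eq.~\eqref{eqn:convergence of model-free LQR}.
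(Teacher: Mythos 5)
Your proposal follows essentially the same route as the paper's proof: the same bias--variance decomposition through the smoothed cost $J^r$, the same Flaxman-type one-point gradient identity and $Lr$ smoothing-bias bound, the same split of the statistical error into a finite-horizon truncation term and a finite-sample concentration term with a union bound over $t$, and the same induction coupling oracle accuracy with feasibility $J(K_t)\le J/2$ before invoking the one-step descent from Theorem~\ref{thm:noisy gradient}. The only cosmetic difference is that where you cite Bernstein-type bounds for the sub-exponential rollout costs, the paper instead conditions on a high-probability event bounding the disturbances and then applies a bounded-martingale vector Azuma inequality, accounting for the truncation bias separately; this does not alter the structure of the argument.
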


\textbf{Discussion.} Compared to the generic result provided in Theorem~\ref{thm:noisy gradient}, the extra step here is to specify the choice of $\varepsilon_t$ such that $\Vert \widehat{\nabla J(K_t)}-\nabla J(K_t)\Vert_F\le\varepsilon_t$ for the noisy gradient  $\widehat{\nabla J(K_t)}$ returned by Algorithm~\ref{algo:gradient estimate}. Since $\widehat{\nabla J(K_t)}$ is stochastic due to the stochastic disturbance $w_k$ in system~\eqref{eqn:LTI}, we show that the upper bound $\Vert \widehat{\nabla J(K_t)}-\nabla J(K_t)\Vert_F\le\varepsilon_t$ holds with high probability under the choice of $\varepsilon_t$ in Eq.~\eqref{eqn:epsilon_t for LQR} and consequently, the overall convergence result provided in Theorem~\ref{thm:noisy gradient for LQR} holds with high probability. Additionally, we see from the results in Lemma~\ref{lemma:properties of J(K)} that $\varepsilon_t$ in Eq.~\eqref{eqn:epsilon_t for LQR} can be determined using the parameters of the LQR problem, including $\beta_0,\beta_1,\sigma_w,\psi,J$ in Eq.~\eqref{eqn:parameters in J(K)}. Meanwhile, note that Eq.~\eqref{eqn:epsilon_t for LQR} can be equivalently written as $\varepsilon_t=\widetilde{\calO}(r+1/(\sqrt{\ell_t}r)+1/(N_tr))$, where $\widetilde{\calO}(\cdot)$ hides polynomial factors in problem parameters of LQR and logarithmic factors in $T,N_t,\ell_t$. Thus, to satisfy the requirement on $\varepsilon_t$ in Theorem~\ref{thm:noisy gradient for LQR}, the lengths $N_t$ and $\ell_t$ need to be greater than some polynomials in the problem parameters of LQR. Furthermore, setting the input parameter $t$ for Algorithm~\ref{algo:gradient estimate} to satisfy $r=\calO(1/(\ell_t)^{1/4})$ and setting the trajectory length to be sufficiently long such that $N_t\ge\sqrt{\ell_t}$, one can show that $\varepsilon_t=\widetilde{\calO}(1/(\ell_t)^{1/4})$.

Similar to our arguments before, the overall convergence of \texttt{NAQGD} (when applied to model-free LQR) is given by a term with linear convergence and a term that depends quadratically on the gradient noise $\varepsilon_t$. Alternatively, the convergence given by Eq.~\eqref{eqn:convergence of model-free LQR} in Theorem~\ref{thm:noisy gradient for LQR} can be interpreted as follows. Suppose we want to achieve a $\tau$-convergence result, i.e., $J(K_t)-J(K^*)\le\tau$ for some $\tau\in\R_{>0}$.  Letting $N_t$ and $\ell_t$ to be the same for all $t\in\{0,\dots,T-1\}$, we have $\varepsilon_t=\varepsilon$ for some $\varepsilon$ and for all $t$, and $\bar{\varepsilon}_t=14\varepsilon_t$ for all $t$ by Eq.~\eqref{eqn:def of bar epsilon_t}, which implies via Eq.~\eqref{eqn:convergence of model-free LQR} that 
\begin{equation*}
J(K_t)-J(K^{\star})\le\max\Big\{\Big(1-\frac{\alpha\mu}{3}\Big)^t(J(K_0)-J(K^*)),\frac{210\varepsilon^2}{\mu}\Big\}.
\end{equation*}
Moreover, we know from our arguments above that $\varepsilon=\widetilde{\calO}(1/(\ell_t)^{1/4})$, i.e., $\varepsilon^2=\widetilde{\calO}(1/\sqrt{\ell_t})$. One can now deduce  that to achieve the desired $\tau$-convergence result, the number of iterations for \texttt{NAQGD} should satisfy $T=\calO(\log(1/\tau))$ and the number of trajectories used in Algorithm~\ref{algo:gradient estimate} should satisfy $\ell_t=\calO((1/\tau)^2(\log(1/\tau))^2)$ for all $t\in\{0,\dots,T-1\}$. The above arguments yield the following sample complexity result which is a typical performance measure of learning algorithms for LQR (with unknown system model) \cite{tsiamis,hu}.
\begin{corollary}\label{coro:sample complexity}
It requires $\calO((1/\tau)^2(\log(1/\tau))^3)$ number of trajectories of system~\eqref{eqn:LTI} for the \texttt{NAQGD} algorithm to achieve a $\tau$-convergence result when applied to the communication-constrained model-free LQR.  
\end{corollary}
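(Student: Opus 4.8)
The plan is to treat Corollary~\ref{coro:sample complexity} as a counting consequence of Theorem~\ref{thm:noisy gradient for LQR}: the total number of system trajectories consumed by \texttt{NAQGD} is the number of outer policy-gradient iterations $T$ times the per-iteration trajectory budget $\ell_t$, which I would take to be the same value $\ell$ across all $t$. The whole argument then reduces to separately sizing $T$ and $\ell$ so that both terms inside the maximum in Eq.~\eqref{eqn:convergence of model-free LQR} fall below the target accuracy $\tau$, and finally multiplying the two budgets.

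First I would bound the number of iterations $T$ using the linear-convergence term. Forcing $(1-\alpha\mu/3)^{T}(J(K_0)-J(K^*)) \le \tau$ and taking logarithms gives $T \ge \log\big((J(K_0)-J(K^*))/\tau\big)\big/\log\big(1/(1-\alpha\mu/3)\big)$; since $\alpha,\mu$ are fixed problem constants and $J(K_0)-J(K^*)\le J(K_0)\le J/4$ by the initialization hypothesis, this yields $T=\calO(\log(1/\tau))$. Next I would bound the per-iteration count $\ell$ using the noise-floor term. With $\ell_t=\ell$ and $N_t$ uniform, Eq.~\eqref{eqn:def of bar epsilon_t} collapses to $\bar{\varepsilon}_t^2=14\varepsilon^2$, so the second term in Eq.~\eqref{eqn:convergence of model-free LQR} equals $210\varepsilon^2/\mu$; demanding $210\varepsilon^2/\mu\le\tau$ requires $\varepsilon^2=\calO(\mu\tau)$. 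Invoking the scaling $\varepsilon^2=\widetilde{\calO}(1/\sqrt{\ell})$ established in the discussion (obtained by choosing $r=\calO(\ell^{-1/4})$ to balance the $r$ and $1/(\sqrt{\ell}\,r)$ contributions in Eq.~\eqref{eqn:epsilon_t for LQR} and taking $N_t\ge\sqrt{\ell}$), I solve $\widetilde{\calO}(1/\sqrt{\ell})\le\calO(\tau)$ for $\ell$, which gives $\ell=\calO\big((1/\tau)^2(\log(1/\tau))^2\big)$ once the hidden logarithmic factors are tracked. Multiplying then produces the total trajectory count $T\cdot\ell=\calO\big((1/\tau)^2(\log(1/\tau))^3\big)$, as claimed.

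The main obstacle I anticipate is not the arithmetic but the self-consistency of the polylogarithmic factors hidden inside $\widetilde{\calO}$. The bound on $\varepsilon_t$ in Eq.~\eqref{eqn:epsilon_t for LQR} contains terms such as $\log(45T/\delta)$, $\log(27TN_t/\delta)$, and $\log(3N_t\ell_tT/\delta)$, so the exponent on $\log(1/\tau)$ in the final count depends delicately on how these logs behave once $\ell$ and $N_t$ are themselves polynomial in $1/\tau$. I would therefore verify that substituting $\ell=\calO((1/\tau)^2(\log(1/\tau))^2)$ and $N_t=\calO(\sqrt{\ell})$ back into these logarithms leaves them at $\calO(\log(1/\tau)+\log(1/\delta))$, so that exactly two extra powers of $\log(1/\tau)$ (beyond the $(1/\tau)^2$) are absorbed into $\ell$, and the subsequent multiplication by $T=\calO(\log(1/\tau))$ produces precisely the stated cube rather than a higher power.

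As a final bookkeeping step I would confirm that this choice of $\ell$, $N_t$, and $r$ simultaneously satisfies the standing hypotheses of Theorem~\ref{thm:noisy gradient for LQR} — namely $r\le\min\{J/(2\bar{G}),D\}$, $\varepsilon_t\le G$, and $15\bar{\varepsilon}_t^2/\mu\le J/4$ — so that the high-probability convergence guarantee is actually in force. The last of these coincides (for the regime of small $\tau$ of interest) with the noise-floor requirement $210\varepsilon^2/\mu\le\tau$ already imposed, while $\varepsilon_t\le G$ and the bound on $r$ are met for all sufficiently large $\ell$; hence no additional constraint tightens the order of $\ell$, and the sample-complexity estimate holds with probability at least $1-\delta$.
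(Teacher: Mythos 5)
Your argument is correct and follows essentially the same route as the paper: bound $T=\calO(\log(1/\tau))$ from the linear-convergence term, bound $\ell_t=\calO((1/\tau)^2(\log(1/\tau))^2)$ from the noise-floor term via $\varepsilon^2=\widetilde{\calO}(1/\sqrt{\ell_t})$ with $r=\calO(\ell_t^{-1/4})$ and $N_t\ge\sqrt{\ell_t}$, and multiply. Your additional care in tracking the self-referential logarithmic factors and re-verifying the standing hypotheses of Theorem~\ref{thm:noisy gradient for LQR} is more explicit than the paper's discussion but does not change the argument.
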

The above sample complexity result also matches with the result in \cite{malik2020derivative} (up to logarithmic factors in $1/\tau$), where a version of LQR with discounted cost was studied in \cite{malik2020derivative} for the model-free setting without any quantization, and the convergence result in \cite{malik2020derivative} only holds with a constant probability of $3/4$.

{\bf Proof Sketch of Theorem~\ref{thm:noisy gradient for LQR}.} The main step in the proof of Theorem~\ref{thm:noisy gradient for LQR} is to upper bound $\Vert\widehat{\nabla J(K_t)}-\nabla J(K_t)\Vert_F$, where $\widehat{\nabla J(K_t)}$ is a random matrix due to the random disturbance in system~\eqref{eqn:LTI}. Since the noisy gradient $\widehat{\nabla J(K_t)}$ is not an unbiased estimate of $\nabla J(K_t)$, $\Vert\widehat{\nabla J(K_t)}-\nabla J(K_t)\Vert_F$ may not be upper bounded by directly applying standard concentration inequalities. Thus, we prove the result by introducing an unbiased estimate of $\nabla J(K_t)$, denoted as $\nabla J^r(K_t)$, and decomposing $\Vert\widehat{\nabla J(K_t)}-\nabla J(K_t)\Vert_F\le\Vert\widehat{\nabla J(K_t)}-\nabla J^r(K_t)\Vert_F+\Vert\nabla J^r(K_t)-\nabla J(K_t)\Vert_F$. We then provide upper bounds on the two resulting terms using concentration inequalities for martingales.

\section{Numerical Results}
To validate our theoretical results, we apply \texttt{AQGD} and \texttt{NAQGD} to solve instances of the LQR problem in \eqref{eqn:LQR obj J(K)} constructed below. Specifically, we generate a Schur-stable $A\in\bbR^{5\times 5}$ matrix in a random manner and a $B\in\bbR^{5\times 3}$ matrix is also generated randomly. The cost matrices are set to be $Q=5I_{5\times 5}$ and $R=5I_{3\times 3}$. The noise covariance is set to be $\Sigma_w=I_5$. We first apply \texttt{AQGD} (Algorithm~\ref{algo:AQGD}) to solve the above LQR instances when the exact gradient $\nabla J(K)$ is available for a given stabilizing $K^{3\times 5}$. Note that $K_0=0$ is an initial stabilizing controller since $A$ is stable. The parameters in \texttt{AQGD} are set according to Corollary~\ref{coro:AQGD local}, where the parameters of the LQR problem are given by Lemma~\ref{lemma:properties of J(K)}. In particular, we set the step size $\alpha=10^{-3}$ and set $R_0=G$ (since $\Vert J(K_0)\Vert_F\le G$ by Lemma~\ref{lemma:properties of J(K)}(c)). The performance of \texttt{AQGD} is presented in Fig.~\ref{fig:convergence}(a), which shows the exponential convergence of $J(K_t)$ to the optimal solution $J(K^{\star})$, aligning with the result in Corollary~\ref{coro:AQGD local} and also our findings in Sections~\ref{sec:AQGD}-\ref{sec:noisy AQDG}. The fluctuation of the curve in Fig.~\ref{fig:convergence} is due to the fact that when proving the convergence of \texttt{AQGD}, we only show that $V_t=J(K_t)-J(K^*)+\alpha R_t$ decays exponentially, i.e., $J(K_t)-J(K^*)$ can potentially increase in some iterations of \texttt{AQGD}. 

Next, we apply \texttt{NAQGD} (described in Section~\ref{sec:noisy AQDG}) to the LQR instances constructed above using the noisy gradient $\widehat{\nabla J(K_t)}$ returned by Algorithm~\ref{algo:gradient estimate}, where the parameters in \texttt{NAQGD} and Algorithm~\ref{algo:gradient estimate} are set according to Theorem~\ref{thm:noisy gradient for LQR}. In particular, we set $\alpha=0.2\times 10^{-3}$, $r=0.1$, $\ell_t=\ell\ge30(1/\tau)^2(\log(1/\tau))^3$ (with $\tau=0.1$) and $N_t\ge\ell_t$ for all $t\in\{0,\dots,40\}$, which implies that $\varepsilon_t=\varepsilon$ for all $t\in\{0,\dots,40\}$ as per Eq.~\eqref{eqn:epsilon_t for LQR}. We further set $R_0=\varepsilon+G$ such that $\Vert\widehat{J(K_0)}\Vert_F\le R_0$ (see our arguments in the proof of Theorem~\ref{thm:noisy gradient for LQR}). While Fig.~\ref{fig:convergence}(a) shows that \texttt{AQGD} converges to $J(K^*)$, Fig.~\ref{fig:convergence}(b) shows that \texttt{NAQGD} converges (exponentially) to only a neighborhood around $J(K^*)$, which aligns with the result in Theorem~\ref{thm:noisy gradient for LQR}. In addition, since the noisy gradients are used in \texttt{NAQGD}, we choose a more conservative step size $\alpha$ compared to that chosen in \texttt{AQGD}, which leads to a slower convergence rate in Fig.~\ref{fig:convergence}(b).

\begin{figure}[htbp]
\centering
\subfloat[a][Performance of \texttt{AQGD}]{\includegraphics[width=0.45\linewidth]{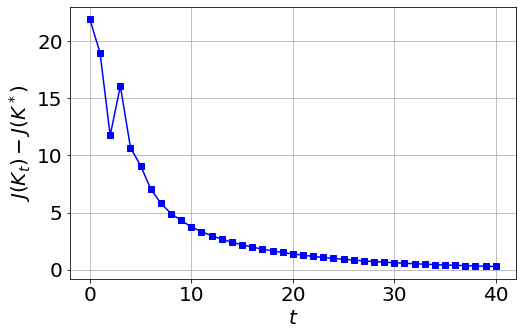}} 
\subfloat[b][Performance of \texttt{NAQGD}]{\includegraphics[width=0.45\linewidth]{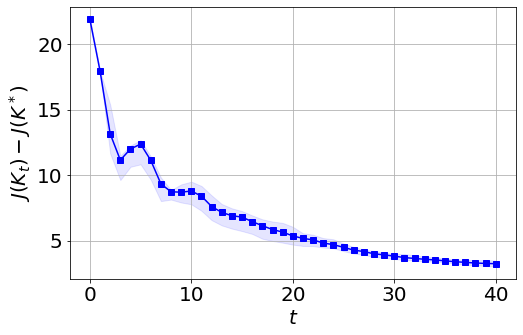}}
\caption{The suboptimality gap $J(K_t)-J(K^*)$ versus the iteration $t$ in the \texttt{AQGD} and \texttt{NAQGD} algorithms. In Fig.~\ref{fig:convergence}(b), the results are averaged over $10$ experiments and the shaded regions represent quantiles.}
\label{fig:convergence}
\end{figure}

\section{Conclusions and Future Directions}
We studied policy gradient algorithms for the model-free LQR problem subject to communication constraints. Specifically, we considered a rate-limited channel and introduced a novel adaptively quantized gradient-descent algorithm titled \texttt{AQGD}. We showed that under assumptions of smoothness and gradient-domination, \texttt{AQGD} guarantees exponentially fast convergence to the globally optimal solution. More importantly, above a finite bit-rate, the exponent of convergence of \texttt{AQGD} remains unaffected by quantization. We further introduced a variant of \texttt{AQGD} that works under noisy gradients and local assumptions, and applied it to solve the model-free LQR problem, providing convergence and finite-sample guarantees along the way. Overall, our work can be seen as an initial attempt towards merging model-free control with the area of networked control systems. There are several open questions left open by our work. We discuss some of them below. 

\begin{itemize}
\item \textbf{Adaptive Quantization for other optimization and RL problems.} One key message conveyed by our work is the power of adaptive quantization. Using very simple guiding principles, we could craft algorithms that build on this powerful idea and incur no loss in performance above certain minimal requirements on the channel. This naturally begs the question: Can the ideas developed in this paper be applied more broadly to other classes of optimization and RL problems subject to bit-constraints? Our conjecture is that for stochastic approximation problems where the underlying operator exhibits some version of Lipschitzness (e.g., like the gradients in smooth optimization), similar ideas should likely go through. 

\item \textbf{Alternate Channel Models.} The bit-constrained channel model we studied in this work is arguably one of the simplest channel models. One natural direction is to explore other well-known models in the networked controls literature: packet-dropping models, noisy models, and channels that introduce latencies. In general, the interplay between different types of communication channels and the finite-time performance of RL algorithms is not well understood. 

\item \textbf{Communication-Efficient Multi-Agent RL.} The rationale for looking at communication constraints in this paper was broadly motivated by the areas of federated/multi-agent RL and RL-based control over networks. In particular, while some recent works have considered multi-agent versions of the LQR problem~\cite{wang2023model, shinMARL}, the aspect of how communication affects performance is not adequately explored in such works. 
\end{itemize}
\bibliography{refs}
\bibliographystyle{unsrt}
\newpage
\appendix
\section{Proof of Theorem~\ref{thm:PL}}
\label{app:PLproof}
In this section, we will provide a proof for Theorem~\ref{thm:PL}, which relies on a few intermediate lemmas. The first one below provides a bound on the performance of the scalar quantizer introduced in Section~\ref{sec:algorithm design for AQGD}.

\begin{lemma} \label{lemma:SQ} (\textbf{Scalar Quantizer Bound}) Given a vector $X \in \mathbb{R}^d$ such that $\Vert X \Vert \leq R$, let  $\tilde{X}=\mathcal{Q}_{b,R}(X).$ The following is then true:
$$ \Vert \tilde{X} - X \Vert \leq \gamma R,$$
where $\gamma = \frac{\sqrt{d}}{2^b}.$ 
\end{lemma}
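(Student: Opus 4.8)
The plan is to unpack the definition of the scalar quantizer $\mathcal{Q}_{b,R}$ given in Section~\ref{sec:algorithm design for AQGD} and reduce the vector bound to a per-coordinate bound. First I would observe that the hypothesis $\Vert X\Vert \le R$ forces each coordinate to lie in the quantizer's range: since $|X_i| \le \Vert X \Vert \le R$, we have $X_i \in [-R, R]$ for every $i \in [d]$, so no component overflows and the scalar quantizer is applied to each $X_i$ exactly as described.

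Next I would carry out the elementary per-coordinate error estimate. The quantizer partitions $[-R, R]$ into $2^b$ bins of equal width $2R/2^b$ and returns the center of the bin containing $X_i$. Since the distance from any point in a bin to that bin's center is at most half the bin width, we get the bound
$$
|\tilde{X}_i - X_i| \le \frac{1}{2}\cdot\frac{2R}{2^b} = \frac{R}{2^b}, \quad \forall i \in [d].
$$
This is the only genuine content of the argument, and it is a one-line geometric fact about uniform rounding on an interval.

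Finally I would assemble the Euclidean norm by summing the squared coordinate errors:
$$
\Vert \tilde{X} - X \Vert^2 = \sum_{i=1}^d (\tilde{X}_i - X_i)^2 \le d \left(\frac{R}{2^b}\right)^2 = \frac{d R^2}{2^{2b}},
$$
and then take square roots to conclude $\Vert \tilde{X} - X\Vert \le \frac{\sqrt{d}}{2^b} R = \gamma R$, matching the claimed value of $\gamma$.

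I do not expect any real obstacle here: the result is a routine quantization estimate whose entire subtlety is the bookkeeping of bin width versus half-bin width and the factor $\sqrt{d}$ that arises from passing from the coordinatewise $\ell_\infty$-type bound to the $\ell_2$ norm. The only point worth stating carefully is that $\Vert X \Vert \le R$ guarantees every coordinate stays in range, so that the displayed per-coordinate bound is valid simultaneously for all $i$; everything else is direct computation. This $\sqrt{d}$ factor is precisely what later produces the extra $\log(d)$ term in the bit-rate requirement of Eq.~\eqref{eqn:rate_req}, which is then removed in Section~\ref{subsec:minimal} by replacing the scalar quantizer with an $\epsilon$-net covering.
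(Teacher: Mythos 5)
Your proposal is correct and follows essentially the same route as the paper's proof: bound each coordinate by $|X_i|\le \Vert X\Vert\le R$, apply the half-bin-width estimate $|\tilde{X}_i-X_i|\le 2^{-b}R$, and sum squares to collect the $\sqrt{d}$ factor. The only difference is that you spell out the bin-width arithmetic that the paper leaves as ``easy to see.''
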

\begin{proof}
The proof is straightforward, and we only provide it here for completeness. Let $X_i$ and $\tilde{X}_i$ be the $i$-th components of $X$ and $\tilde{X}$, respectively. Since $\Vert X \Vert \leq R$, clearly $|X_i| \leq R, \forall i \in [d]$. From the description of the scalar quantizer in Section~\ref{sec:AQGD}, if $b$ bits are used to encode $X_i$, then it is easy to see that $|X_i - \tilde{X}_i| \leq 2^{-b} R.$ We thus have:
$$ \Vert \tilde{X} - X \Vert = \sqrt { \sum_{i\in d} \Vert X_i - \tilde{X}_i \Vert^2} \leq \frac{\sqrt{d}}{2^b} R = \gamma R,
$$
which is the desired claim. 
\end{proof}

Next, we prove Lemma~\ref{lemma:quant} from Section~\ref{sec:analysis} in the main body, which we restate below.

\begin{lemma} \label{lemma:quantApp} (\textbf{No Overflow and Quantization Error})  Suppose $f$ is $L$-smooth. The following are then true for all $t\geq 0$: (i) $\Vert i_t \Vert \leq R_t$; and (ii) $\Vert e_t \Vert \leq \gamma R_t,$ where $e_t=\nabla f(x_t)-g_t$.
\end{lemma}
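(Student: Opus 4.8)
The plan is to prove both parts by a single induction on $t$, with the crucial preliminary observation that the gradient-estimation error coincides exactly with the quantization error. Indeed, since $g_t = g_{t-1} + \tilde{i}_t$ and $i_t = \nabla f(x_t) - g_{t-1}$, we have $e_t = \nabla f(x_t) - g_t = i_t - \tilde{i}_t$. Thus part~(ii) is nothing but a bound on how well the quantizer reproduces its input, and it will follow from the Scalar Quantizer Bound (Lemma~\ref{lemma:SQ}) the instant we know the input lies in the correct range, i.e.\ the instant we know part~(i). The two parts are therefore coupled, and I would interleave them inside the induction.

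For the base case $t=0$, the initialization $g_{-1}=0$ gives $i_0 = \nabla f(x_0)$, and the choice of $R_0$ guarantees $\Vert i_0 \Vert = \Vert \nabla f(x_0) \Vert \le R_0$, which is (i) at $t=0$. Applying Lemma~\ref{lemma:SQ} with input $i_0$ and range $R_0$ then yields $\Vert e_0 \Vert = \Vert i_0 - \tilde{i}_0 \Vert \le \gamma R_0$, which is (ii) at $t=0$.

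For the inductive step, suppose $\Vert i_t \Vert \le R_t$. Its hypothesis being met, Lemma~\ref{lemma:SQ} gives $\Vert e_t \Vert \le \gamma R_t$, which is part~(ii) at time $t$. To establish (i) at time $t+1$, I would write $i_{t+1} = \nabla f(x_{t+1}) - g_t = \big(\nabla f(x_{t+1}) - \nabla f(x_t)\big) + \big(\nabla f(x_t) - g_t\big)$ and bound each piece. The first is controlled by $L$-smoothness together with the model update $x_{t+1}-x_t = -\alpha g_t$, giving $\Vert \nabla f(x_{t+1}) - \nabla f(x_t)\Vert \le L\Vert x_{t+1}-x_t\Vert = \alpha L \Vert g_t \Vert$; the second is precisely $\Vert e_t\Vert \le \gamma R_t$. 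Summing, $\Vert i_{t+1}\Vert \le \gamma R_t + \alpha L \Vert g_t\Vert$, which equals $R_{t+1}$ by the range-update recursion~\eqref{eqn:Range_Update}. This closes the induction and yields (i) for all $t$; (ii) for all $t$ then follows from (i) via Lemma~\ref{lemma:SQ} exactly as in the base case.

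The computation is short, and the only real subtlety---more conceptual than technical---is recognizing that the recursion~\eqref{eqn:Range_Update} is engineered precisely so that the inductive bound on $\Vert i_{t+1}\Vert$ closes: the $\gamma R_t$ term absorbs the residual quantization error $\Vert e_t\Vert$ inherited from the previous step, while the $\alpha L \Vert g_t\Vert$ term absorbs the drift of the true gradient between consecutive iterates through smoothness. Once this design is seen, the no-overflow guarantee (i) is immediate, and it is exactly this guarantee that legitimizes invoking the scalar quantizer bound at every iteration (since Lemma~\ref{lemma:SQ} requires the input to lie within the declared range $R_t$).
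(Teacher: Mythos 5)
Your proof is correct and follows essentially the same route as the paper's: the same induction on $t$, the same identity $e_t = i_t - \tilde{i}_t$, the same decomposition $i_{t+1} = (\nabla f(x_{t+1}) - \nabla f(x_t)) + e_t$ bounded via $L$-smoothness and the scalar quantizer bound, closing with the range recursion~\eqref{eqn:Range_Update}. The only cosmetic difference is that you interleave part~(ii) explicitly inside the induction, whereas the paper derives the bound on $\Vert e_k \Vert$ within the inductive step for (i) and then states (ii) separately afterward.
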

\begin{proof}
Proof of part (i): We will prove this result via induction. For the base case of induction, we observe that at $t=0$, $i_0=\nabla f(x_0) - g_{-1}= \nabla f(x_0)$, since $g_{-1}=0$. Now since $R_0$ is chosen such that $\Vert \nabla f(x_0) \Vert \leq R_0,$ we conclude that $\Vert i_0 \Vert \leq R_0.$
 
Now suppose $\Vert i_t \Vert \leq R_t$ holds for $t=0, 1, \ldots, k,$ where $k$ is some positive integer. Our goal is to show that $\Vert i_{k+1} \Vert \leq R_{k+1}.$ To that end, we have
\begin{equation}
\begin{aligned}
\Vert i_{k+1} \Vert &\overset{(a)}= \Vert \nabla f(x_{k+1})- g_k \Vert\\
        &= \Vert \nabla f(x_{k+1}) - \nabla f(x_k) + \nabla f(x_k) - g_k \Vert \\
        & \overset{(b)} \leq L \Vert x_{k+1} - x_k \Vert + \Vert \nabla f(x_k) - g_k \Vert \\
        & \overset{(c)} \leq \alpha L \Vert g_k \Vert + \Vert \nabla f(x_k) - g_k \Vert. 
\end{aligned}
\label{eqn:ind1}
\end{equation}
In the above steps, (a) follows from the definition of the innovation $i_{k+1}$ (line 5 of \texttt{AQGD}); (b) follows from $L$-smoothness; and (c) is a consequence of the update rule of \texttt{AQGD} in Eq.~\eqref{eqn:AQGD}. Now from line 8 of \texttt{AQGD}, we have 
$$ g_k = g_{k-1} + \tilde{i}_k = \nabla f(x_k) + \tilde{i}_k - i_k,$$
where in the last step, we used $i_k = \nabla f(x_k) - g_{k-1}.$ 
We thus conclude that 
\begin{equation}
\Vert e_k \Vert = \Vert \nabla f(x_k) - g_k \Vert = \Vert i_k - \tilde{i}_k \Vert.
\label{eqn:ind2}
\end{equation}
From the induction hypothesis, we have $\Vert i_k \Vert \leq R_k.$ Since $\tilde{i}_k = Q_{b,R_k}(i_k),$ Lemma~\ref{lemma:SQ} then tells us that
$$ \Vert i_k - \tilde{i}_k \Vert \leq \gamma R_k.$$
Combining the above display with Eq.~\eqref{eqn:ind1} and Eq.~\eqref{eqn:ind2} yields:
$$ \Vert i_{k+1} \Vert \leq \gamma R_k + \alpha L \Vert g_k \Vert = R_{k+1},$$
where in the last step, we used the update rule for the range in Eq.~\eqref{eqn:Range_Update}. This concludes the induction step.

Proof of part (ii): In the above analysis, we have already shown that
$$\Vert e_t \Vert = \Vert \nabla f(x_t) - g_t \Vert = \Vert i_t - \tilde{i}_t \Vert.$$ Since from part (i), $\Vert i_t \Vert \leq R_t, \forall t \geq 0,$ we have $\Vert i_t - \tilde{i}_t \Vert \leq \gamma R_t$ based on Lemma~\ref{lemma:SQ}. We conclude that $\Vert e_t \Vert \leq \gamma R_t, \forall t\geq 0.$ This completes the proof. 
\end{proof}

Finally, we prove Lemma~\ref{lemma:range} from Section~\ref{sec:analysis} in the main body. 

\begin{lemma} (\textbf{Recursion for Dynamic Range}) \label{lemma:rangeApp} Suppose $f$ is $L$-smooth. If $\alpha$ is such that $\alpha L \leq 1$, then for all $t \geq 0$, we have:
\begin{equation}
R^2_{t+1} \leq 8 \gamma^2 R^2_t + 2 \alpha^2 L^2 \Vert \nabla f(x_t) \Vert^2. 
\end{equation}
\end{lemma}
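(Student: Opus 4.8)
The plan is to start directly from the range recursion in Eq.~\eqref{eqn:Range_Update}, namely $R_{t+1} = \gamma R_t + \alpha L \Vert g_t \Vert$, and to eliminate the dependence on the gradient estimate $g_t$ in favor of the true gradient $\nabla f(x_t)$. The natural handle here is the quantization-error bound already established in Lemma~\ref{lemma:quantApp}(ii), which gives $\Vert e_t \Vert = \Vert \nabla f(x_t) - g_t \Vert \leq \gamma R_t$. By the triangle inequality this yields $\Vert g_t \Vert \leq \Vert \nabla f(x_t) \Vert + \Vert e_t \Vert \leq \Vert \nabla f(x_t) \Vert + \gamma R_t$, which is the only place the smoothness hypothesis enters (through Lemma~\ref{lemma:quantApp}).

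Next I would substitute this linear bound on $\Vert g_t \Vert$ back into the recursion, obtaining
$$ R_{t+1} \leq \gamma R_t + \alpha L \left( \Vert \nabla f(x_t) \Vert + \gamma R_t \right) = (1+\alpha L)\gamma R_t + \alpha L \Vert \nabla f(x_t) \Vert. $$
This is the step where the hypothesis $\alpha L \leq 1$ is used: it lets me bound $(1+\alpha L) \leq 2$, so that
$$ R_{t+1} \leq 2\gamma R_t + \alpha L \Vert \nabla f(x_t) \Vert. $$

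Finally I would square both sides and apply the elementary inequality $(a+b)^2 \leq 2a^2 + 2b^2$ to the two nonnegative terms, giving $R_{t+1}^2 \leq 2(2\gamma R_t)^2 + 2(\alpha L \Vert \nabla f(x_t)\Vert)^2 = 8\gamma^2 R_t^2 + 2\alpha^2 L^2 \Vert \nabla f(x_t)\Vert^2$, which is exactly the claim. The only genuine subtlety worth flagging is the order of operations: one must combine the terms linearly and absorb $(1+\alpha L)$ into the factor $2$ \emph{before} squaring. Squaring the raw recursion first and only then bounding $\Vert g_t\Vert^2$ produces a cross term $\alpha^2 L^2 \gamma^2 R_t^2$ that, even with $\alpha L \le 1$, inflates the coefficient on $\Vert\nabla f(x_t)\Vert^2$ to $4\alpha^2 L^2$ and thus misses the stated constant. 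So the ``hard part'' is simply this bookkeeping choice; everything else is a two-line application of the triangle inequality and $(a+b)^2 \le 2a^2 + 2b^2$.
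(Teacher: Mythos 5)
Your proposal is correct and follows exactly the same route as the paper's proof: bound $\Vert g_t\Vert$ via the triangle inequality and Lemma~\ref{lemma:quantApp}(ii), absorb $(1+\alpha L)$ into a factor of $2$ using $\alpha L\le 1$, then square with $(a+b)^2\le 2a^2+2b^2$. Your remark about combining terms before squaring is a fair observation but does not change the substance.
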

\begin{proof}
From the update rule for the range in Eq.~\eqref{eqn:Range_Update}, we have
\begin{equation}
\begin{aligned}
R_{t+1} &= \gamma R_t + \alpha L \Vert g_t \Vert \\
& \leq \gamma R_t + \alpha L \left( \Vert g_t - \nabla f(x_t) \Vert + \Vert \nabla f(x_t) \Vert  \right)\\ 
& = \gamma R_t + \alpha L \Vert e_t \Vert +  \alpha L \Vert \nabla f(x_t) \Vert  \\
& \overset{(a)} \leq (1+\alpha L) \gamma R_t + \alpha L \Vert \nabla f(x_t) \Vert \\
& \overset{(b)} \leq 2 \gamma R_t + \alpha L \Vert \nabla f(x_t) \Vert,
\end{aligned}
\end{equation}
where for (a), we used Lemma~\ref{lemma:quantApp}, and for (b), we used the fact that $\alpha L \leq 1.$ Squaring both sides of the final inequality above, and using the elementary fact that $(a+b)^2 \leq 2 a^2 + 2 b^2, \forall a,b \in \mathbb{R},$ leads to the claim of the lemma. 
\end{proof}

With the above lemmas, We are in place to prove Theorem~\ref{thm:PL}.
\begin{proof} (\textbf{Proof of Theorem~\ref{thm:PL}}): Consider the following potential function:
\begin{equation}\label{eqn:potential function}
V_t \triangleq z_t + \alpha R^2_t,
\end{equation}
where $z_t = f(x_t)-f(x^*).$ Our goal is to establish that the above potential function decays to $0$ exponentially fast at a desired rate. To that end, we first recall that $L$-smoothness implies the following:
$$ f(y) \leq f(x) + \langle \nabla f(x), y-x \rangle + \frac{L}{2} \Vert y - x \Vert^2, \forall x, y \in \mathbb{R}^d.$$
Using the above display, observe
\begin{equation}
\begin{aligned}
f(x_{t+1}) &\leq f(x_t) + \langle \nabla f(x_t), x_{t+1}-x_t \rangle + \frac{L}{2} \Vert x_{t+1} - x_t \Vert^2\\ 
& \overset{(a)} \leq f(x_t) - \alpha \langle \nabla f(x_t), g_t \rangle + \frac{\alpha^2 L}{2} \Vert g_t \Vert^2\\
& \overset{(b)} \leq f(x_t) - \alpha \langle \nabla f(x_t), \nabla f(x_t) - e_t \rangle + \frac{\alpha^2 L}{2} \Vert \nabla f(x_t) - e_t \Vert^2\\
& \leq f(x_t) - \alpha (1-\alpha L) \Vert \nabla f(x_t) \Vert^2 + \alpha \langle \nabla f(x_t), e_t \rangle + \alpha^2 L \Vert e_t \Vert^2\\ 
& \overset{(c)} \leq f(x_t) - \alpha \left(\frac{1}{2} -\alpha L\right) \Vert \nabla f(x_t) \Vert^2 + \alpha \left(\frac{1}{2}+ \alpha L \right) \Vert e_t \Vert^2\\ 
& \overset{(d)} \leq f(x_t) - \alpha \left(\frac{1}{2} -\alpha L\right) \Vert \nabla f(x_t) \Vert^2 + \alpha \gamma^2 \left(\frac{1}{2}+ \alpha L \right) R^2_t.
\end{aligned}
\label{eqn:final1}
\end{equation}

In the above steps, for (a), we used Eq.~\eqref{eqn:AQGD}; for (b), we used $e_t = \nabla f(x_t) - g_t$; and for (c), we used the elementary fact that for any $a, b \in \mathbb{R}^d$, it holds that
$$ \langle a, b \rangle \leq \frac{1}{2} \Vert a \Vert^2 + \frac{1}{2} \Vert b \Vert^2.$$
Finally, (d) follows from part (ii) of Lemma~\ref{lemma:quantApp}. Using the final display in Eq.~\eqref{eqn:final1} in tandem with Lemma~\ref{lemma:rangeApp}, we then obtain
\begin{equation}
\begin{aligned}
V_{t+1} &= z_{t+1} + \alpha R^2_{t+1} \\
& \leq z_t -  \alpha \left(\frac{1}{2} -\alpha L\right) \Vert \nabla f(x_t) \Vert^2 + \alpha \gamma^2 \left(\frac{1}{2}+ \alpha L \right) R^2_t + \alpha R^2_{t+1}\\
& \leq  z_t - \frac{\alpha}{2} \left(1 - 2 \alpha L - 4 \alpha ^2 L^2 \right) \Vert \nabla f(x_t) \Vert^2 + \alpha \left(8 + \frac{1}{2} + \alpha L \right) \gamma^2 R^2_t\\
& \leq z_t - \frac{\alpha}{2} \left(1-3 \alpha L\right) \Vert \nabla f(x_t) \Vert^2  + 9 \alpha \gamma^2 R^2_t,
\end{aligned}
\end{equation}
where in the last step, we used $\alpha L \leq 1/2.$ Now suppose $\alpha L \leq 1/6.$ Then the above display in tandem with the gradient domination property (see Eq.~\eqref{eqn:PL}) yields:
\begin{equation}
V_{t+1} \leq \underbrace{\left(1 - \frac{\alpha \mu}{2} \right) z_t}_{T_1} + \underbrace{9 \alpha \gamma^2 R^2_t}_{T_2}.
\label{eqn:final2}
\end{equation}
Let us now make a couple of simple observations. Suppose we set $\alpha = \frac{1}{6L}$. Then, in the absence of the term $T_2$ above, it is easy to see that one can achieve a rate of the form $O(\exp(-t/\kappa))$, where $\kappa = L/\mu.$ To continue to preserve this rate in the presence of the term $T_2$ - which arises due to quantization errors - we need $T_2$ to decay faster than $T_1$. As such, we would like to have
$$ 9 \gamma^2 \leq \left(1-\frac{\alpha \mu}{2}\right) = \left(1-\frac{1}{12 \kappa}\right).   \hspace{5mm} \textrm{(Rate Preservation Condition)}$$
Recalling that $\gamma = \sqrt{d}/{2^b},$ a bit of simple algebra shows that 
$$ b \geq C \log \left( \frac{d\kappa}{\kappa-1} \right),$$
suffices to ensure the rate preservation condition above. Thus, suppose the bit-precision $b$ satisfies the condition in the above display. We then immediately obtain from Eq.~\eqref{eqn:final2} that
$$ V_{t+1} \leq \left(1 - \frac{1}{12 \kappa} \right) \left(z_t + \alpha R^2_t\right) = \left(1 - \frac{1}{12 \kappa} \right) V_t,$$
where in the last step, we used the definition of the potential function $V_t$ from Eq.~\eqref{eqn:Lyap}. Iterating the above inequality yields:
$$ V_t \leq \left(1 - \frac{1}{12 \kappa} \right)^t \left(f(x_0)-f(x^*) + \alpha R^2_0\right).$$
The result follows by noting that $f(x_t)-f(x^*) = z_t \leq V_t.$ 
\end{proof}

\section{Proofs Omitted in Section~\ref{sec:noisy AQDG}}\label{app:proofs for NAQGD}
\subsection{Proof of Theorem~\ref{thm:noisy gradient}}
Similarly to the proof of Theorem~\ref{thm:PL}, we prove Theorem~\ref{thm:noisy gradient} via an induction on $t\ge0$. However, since the innovation $i_t$ and the range $R_t$ of the quantizer map have been modified according to Eqs.~\eqref{eqn:innovation noisy case} and \eqref{eqn:Range_Update modified}, our analysis in this proof will be different from that for Theorem~\ref{thm:PL} . We first prove via the induction that $\Vert i_t\Vert\le R_t$, $\Vert e_t\Vert\le\gamma R_t+\varepsilon_t$ and $f(x_t)\le v/2$, $\forall t\ge0$, where $e_t=\nabla f(x_t)-g_t$.   

To prove the base case, we first recall that $f(x_0)\le v/4$ as assumed. Moreover, since $R_0$ is picked to satisfy $\Vert \widehat{\nabla f(x_0)}\Vert\le R_0$, we have $\Vert i_0\Vert=\Vert\widehat{\nabla f(x_0)}\Vert\le R_0$. In fact, note that 
\begin{equation}\label{eqn:upper bound on noisy gradient at x_0}
\begin{aligned}
\Vert\widehat{\nabla f(x_0)}\Vert&=\Vert \widehat{\nabla f(x_0)}-\nabla f(x_0)+\nabla f(x_0)\Vert\\
&\le\Vert\widehat{\nabla f(x_0)}-\nabla f(x_0)\Vert+\Vert\nabla f(x_0)\Vert\\
&\le\varepsilon_0+G,
\end{aligned}
\end{equation}
where the last inequality follows from the assumptions in Eq.~\eqref{eqn:upper bound on gradient noise} and  $\Vert\nabla f(x_0)\Vert\le G$ (since $x_0\in\calX$). Thus, we can choose $R_0=\varepsilon_0+G$. By the definition of Algorithm~\ref{algo:AQGD}, we get
\begin{equation}\label{eqn:upper bound on e_0 noisy case}
\begin{aligned}
\Vert e_0\Vert&=\Vert \nabla f(x_0)-g_0\Vert\\
&=\Vert\nabla f(x_0)-g_{-1}-\tilde{i}_0\Vert\\
&=\Vert \nabla f(x_0)-\widehat{\nabla f(x_0)}+i_0-\tilde{i}_0\Vert\\
&\le\Vert \nabla f(x_0)-\widehat{\nabla f(x_0)}\Vert+\Vert i_0-\tilde{i}_0\Vert\\
&\le \varepsilon_0+\gamma R_0,
\end{aligned}
\end{equation}
where the last inequality follows from the assumption in Eq.~\eqref{eqn:upper bound on gradient noise} and Lemma~\ref{lemma:SQ} via the fact that $\Vert i_0\Vert\le R_0$.

To prove the induction step, suppose the following induction hypotheses hold for $k=0,1,\dots,t$: (i) $\Vert i_k\Vert\le R_k$; and (ii) $f(x_k)\le v/2$. Using the same arguments as Eq.~\eqref{eqn:upper bound on e_0 noisy case} and the induction hypothesis $\Vert i_t\Vert\le R_t$, one can show that $\Vert e_t\Vert\le\varepsilon_t+\gamma R_t$. We now aim to show that $\Vert x_{t+1}-x_t\Vert\le D$ so that the local properties of $f(\cdot)$ can be applied. We begin by relating $x_{t+1}$ to $x_t$ as
\begin{equation}\label{eqn:x_t+1 and x_t relation}
\Vert x_{t+1}-x_t\Vert=\Vert x_t-\alpha g_t-x_t\Vert=\alpha\Vert g_t\Vert\le\alpha\Vert e_t\Vert+\alpha\Vert \nabla f(x_t)\Vert.
\end{equation}
From Eq.~\eqref{eqn:Range_Update modified}, we also have
\begin{equation}\label{eqn:upper bound on R_t noisy case}
\begin{aligned}
R_{t+1}&=\gamma R_t+\alpha L\Vert g_t\Vert+\varepsilon_t+\varepsilon_{t+1}\\
&\le\gamma R_t+\alpha L\Vert e_t\Vert+\alpha L\Vert\nabla f(x_t)\Vert+\varepsilon_t+\varepsilon_{t+1}\\
&\overset{(a)}\le\gamma(1+\alpha L) R_t+\alpha L\Vert\nabla f(x_t)\Vert+(1+\alpha L)\varepsilon_t+\varepsilon_{t+1}\\
&\overset{(b)}\le2\gamma R_t+\alpha LG+2\varepsilon_t+\varepsilon_{t+1},
\end{aligned}
\end{equation}
where (a) follows from the fact $\Vert e_t\Vert\le\varepsilon_t+\gamma R_t$ as we argued above, and (b) is due to the choice of $\alpha$ that satisfies $\alpha L\le 1$ and the gradient upper bound $\Vert \nabla f(x_t)\Vert\le G$ by the induction hypothesis $f(x_t)\le v/2$, i.e., $x_t\in\calX$. Setting $\gamma$ (i.e., $b$) in the same way as Theorem~\ref{thm:PL} (see our arguments in the proof of Theorem~\ref{thm:PL}) yields 
\begin{equation*}
9\gamma^2\le \left(1-\frac{\alpha\mu}{2}\right)\le 1,  
\end{equation*}
where the second inequality follows from the choice of $\alpha$; we conclude that  $0<\gamma^2\le 1/9$ (i.e., $0<\gamma\le1/3$). Unrolling the recursion in Eq.~\eqref{eqn:upper bound on R_t noisy case}, we obtain 
\begin{equation}\label{eqn:upper bound on R_t noisy case unrolled}
\begin{aligned}
R_t&\le(2\gamma)^tR_0+\sum_{k=1}^{t}(2\gamma)^{k-1}(\alpha LG+2\varepsilon_{t-k}+\varepsilon_{t+1-k})\\
&\le G+\varepsilon_0+\frac{\alpha LG+3\bar{\varepsilon}}{1-2\gamma}\\
&\overset{(a)}\le 4G+10\bar{\varepsilon}\\
&\overset{(b)}\le 14G
\end{aligned}
\end{equation}
where (a) follows from $0<\gamma\le1/3$ as we argued above and $\bar{\varepsilon}\triangleq\max_{t\ge0}\varepsilon_t$, and (b) follows from the assumption that $\varepsilon_t\le G$ for all $t\ge0$. Substituting th last relation in Eq.~\eqref{eqn:upper bound on R_t noisy case unrolled} into $\Vert e_t\Vert\le\varepsilon_t+\gamma R_t$ yields $\Vert e_t\Vert\le G+14\gamma G\le 6G$, where we again invoke the assumption $\varepsilon_t\le G$ and the fact $0<\gamma\le1/3$. Returning to Eq.~\eqref{eqn:x_t+1 and x_t relation}, we get
\begin{equation}\label{eqn:x_t+1 and x_t distance D}
\begin{aligned}
\Vert x_{t+1}-x_t\Vert&\le\alpha\Vert e_t\Vert+\alpha\Vert\nabla f(x_t)\Vert\\
&\overset{(a)}\le6\alpha G+\alpha G\\
&\overset{(b)}\le D,
\end{aligned}
\end{equation}
where (a) follows from the induction hypothesis $f(x_t)\le v/2$ which implies that $x_t\in\calX$ and $\Vert\nabla f(x_t)\Vert\le G$, and (b) follows from the choice of $\alpha$ that satisfies $\alpha\le D/(7G)$. It follows from Eq.~\eqref{eqn:x_t+1 and x_t distance D} that the local $(L,D)$-smoothness property can be applied to $x_{t+1}$ and $x_t$. We can now upper bound $\Vert i_{t+1}\Vert$ as
\begin{equation}
\begin{aligned}
\Vert i_{t+1}\Vert&=\Vert\widehat{\nabla f(x_{t+1})}-g_k\Vert\\
&\le\Vert\widehat{\nabla f(x_{t+1})}-\nabla f(x_{t+1})\Vert+\Vert\nabla f(x_{t+1})-\nabla f(x_{t})\Vert+\Vert\nabla f(x_t)-g_t\Vert\\
&\overset{(a)}\le\varepsilon_{t+1}+\Vert e_t\Vert+\Vert\nabla f(x_{t+1})-\nabla f(x_{t})\Vert\\
&\overset{(b)}\le\varepsilon_{t+1}+\varepsilon_t+\gamma R_t+L\Vert x_{t+1}-x_{t}\Vert\\
&\le\varepsilon_{t+1}+\varepsilon_t+\gamma R_t+L\Vert x_t-\alpha g_t-x_t\Vert\\
&\le\varepsilon_{t+1}+\varepsilon_t+\gamma R_t+\alpha L\Vert g_t\Vert\\
&\le\gamma R_t +\alpha L\Vert g_t\Vert+\varepsilon_{t+1}+\varepsilon_{t}\\
&=R_{t+1},
\end{aligned}
\end{equation}
where (a) follows from the assumption in Eq.~\eqref{eqn:upper bound on gradient noise}, and (b) follows from the fact $\Vert e_t\Vert\le\varepsilon_t+\gamma R_t$ as we argued above, and the $(L,D)$-smoothness of $f(\cdot)$. Thus, we have shown $\Vert i_{t+1}\Vert\le R_{t+1}$ for the induction step. 

To finish the induction step, it remains to show that $f(x_{t+1})\le v/2$. To this end, following the arguments to those for Eq.~\eqref{eqn:final1} and leveraging the $(L,D)$-smoothness of $f(\cdot)$ via Lemma~\ref{lemma:local smooth 2nd}, one can relate $f(x_{t+1})$ to $f(x_t)$ as
\begin{equation}\label{eqn:relate f(x_t+1) to f(x_t)}
\begin{aligned}
f(x_{t+1})&\le f(x_t)-\alpha\left(\frac{1}{2}-\alpha L\right)\Vert\nabla f(x_t)\Vert^2+\alpha\left(\frac{1}{2}+\alpha L\right)\Vert e_t\Vert^2\\
&\le f(x_t)-\frac{\alpha}{3}\Vert\nabla f(x_t)\Vert^2+\alpha(1+2\alpha L)(\gamma^2R_t^2+\varepsilon_t^2),
\end{aligned}
\end{equation}
where the second inequality follows from $\Vert e_t\Vert\le\gamma R_t+\varepsilon_t$ and the choice of $\alpha$ that satisfies $\alpha L\le1/6$. Let us denote $z_t=f(x_t)-f(x^*)$ and consider the following potential function:
\begin{equation}
\begin{aligned}
V_t=z_t+z_{t-1}+\alpha R_t^2
\end{aligned}
\end{equation}
To relate $V_{t+1}$ to $V_{t}$, we first show that
\begin{equation}\label{eqn:relate V_t+1 to V_t}
\begin{aligned}
V_{t+1}&=z_{t+1}+z_t+\alpha R_t^2\\
&\le z_t-\frac{\alpha}{3}\Vert\nabla f(x_t) \Vert^2+\alpha(1+2\alpha L)(\gamma^2R_t^2+\varepsilon_t^2)\\
&\qquad+z_{t-1}-\frac{\alpha}{3}\Vert\nabla f(x_{t-1})\Vert^2+\alpha(1+2\alpha L)(\gamma^2R_{t-1}^2+\varepsilon_{t-1}^2)+\alpha R_t^2\\
&\le z_t-\frac{\alpha}{3}\Vert\nabla f(x_t)\Vert^2+\alpha(\gamma^2+2\gamma^2\alpha L+1)R_t^2\\
&\quad+z_{t-1}-\frac{\alpha}{3}\Vert\nabla f(x_{t-1})\Vert^2+\alpha(\gamma^2+2\gamma^2\alpha L)R_{t-1}^2+\alpha(1+2\alpha L)(\varepsilon_t^2+\varepsilon_{t-1}^2),
\end{aligned}
\end{equation}
where the first inequality follows from Eq.~\eqref{eqn:relate f(x_t+1) to f(x_t)}. Note that by squaring both sides of the second last inequality in Eq.~\eqref{eqn:upper bound on R_t noisy case} and using the fact $(\sum_{i=1}^na_i^2)\le n\sum_{i=1}^na_i^2$, $\forall n\in\bbZ_{\ge1}$ and $\forall a_i\in\bbR$, we get
\begin{equation}\label{eqn:upper bound on R_t^2 noisy case}
\begin{aligned}
R_{t}^2\le4\gamma^2(1+\alpha L)^2R_{t-1}^2+4\alpha^2L^2\Vert\nabla f(x_{t-1})\Vert^2+4(1+\alpha L)^2\varepsilon_{t-1}^2+4\varepsilon_{t}^2.
\end{aligned}
\end{equation}
Substituting Eq.~\eqref{eqn:upper bound on R_t^2 noisy case} into the last inequality of Eq.~\eqref{eqn:relate V_t+1 to V_t}, using the local gradient-domination property for $x_t\in\calX$, one can show via some algebra that
\begin{equation}
\begin{aligned}
V_{t+1}&\le\left(1-\frac{2\alpha\mu}{3}\right)z_t+\left(1-2\left(\frac{1}{3}-4(\gamma^2+2\gamma^2\alpha L+1)\alpha^2L^2\right)\alpha\mu\right)z_{t-1}\\
&\qquad+\alpha\gamma^2\Big(4(\gamma^2+2\gamma^2\alpha L+1)(1+\alpha L)^2+1+2\alpha L\Big)R_{t-1}^2\\
&\qquad+\alpha\Big(4(\gamma^2+2\gamma^2\alpha L+1)(1+\alpha L)^2+1+2\alpha L\Big)\varepsilon_{t-1}^2\\
&\qquad+\alpha\Big(4(\gamma^2+2\gamma^2\alpha L+1)+1+2\alpha L\Big)\varepsilon_t^2\\
&\overset{(a)}\le\left(1-\frac{2\alpha\mu}{3}\right)z_t+\left(1-\frac{2\alpha\mu}{5}\right)z_{t-1}+8\alpha\gamma^2R_{t-1}^2+8\alpha\varepsilon_{t-1}^2+6\alpha\varepsilon_t^2\\
&\overset{(b)}\le\left(1-\frac{2\alpha\mu}{5}\right)(z_t+z_{t-1}+\alpha R_{t-1}^2)+8\alpha\varepsilon_{t-1}^2+6\alpha\varepsilon_{t}^2\\
&\overset{(c)}=\left(1-\frac{2\alpha\mu}{5}\right)V_{t}+8\alpha\varepsilon_{t-1}^2+6\alpha\varepsilon_t^2,
\end{aligned}
\end{equation}
where (a) uses the choice of $\alpha$ such that $\alpha L\le1/6$ and the fact $\gamma^2\le 1/9$, (b) follows from $8\gamma^2\le(1-2\alpha\mu/5)$ again due to the facts $\alpha L\le1/6$ and $\gamma^2\le1/9$, and (c) follows from the definition of $V_t$. To proceed, we split our analysis into two cases. First, supposing $8\varepsilon_{t-1}^2+6\varepsilon_t^2\le V_t\mu/15$, we have
\begin{equation}
\begin{aligned}
V_{t+1}&\le\left(1-\frac{2\alpha\mu}{5}\right)V_t+\frac{V_t\mu}{15}=\left(1-\frac{\alpha\mu}{3}\right)V_t.
\end{aligned}
\end{equation}
Second, supposing $8\varepsilon_{t-1}^2+6\varepsilon_t^2>V_t\mu/15$, we have
\begin{equation}
\begin{aligned}
V_{t+1}&\le\left(\left(1-\frac{2\alpha\mu}{5}\right)\frac{15}{\mu}+\alpha\right)(8\varepsilon_{t-1}^2+6\varepsilon_{t}^2)\\
&\le\frac{15}{\mu}(8\varepsilon_{t-1}^2+6\varepsilon_t^2).
\end{aligned}
\end{equation}
It follows from the above two cases that 
\begin{equation}
V_{t+1}\le\max\left\{\left(1-\frac{\alpha\mu}{3}\right)V_t,\frac{15}{\mu}(8\varepsilon_{t-1}^2+6\varepsilon_t^2)\right\}.
\end{equation}
Unrolling the above relation, one can show that
\begin{equation}
V_{t+1}\le\max\left\{\left(1-\frac{\alpha\mu}{3}\right)^{t+1}V_0,\frac{15\bar{\varepsilon}_t^2}{\mu}\right\},
\end{equation}
where $\bar{\varepsilon}_t$ is given by Eq.~\eqref{eqn:def of bar epsilon_t}. Also recalling the definition of $V_t$ and noting that $V_0=z_0+z_{-1}+\alpha R_{-1}^2$ with $z_{-1}\triangleq0$ and $R_{-1}\triangleq0$, we further obtain
\begin{equation}\label{eqn:distance to f(x^*)}
f(x_{t+1})-f(x^*)\le\max\left\{\left(1-\frac{\alpha\mu}{3}\right)^{t+1}(f(x_0)-f(x^*)),\frac{15\bar{\varepsilon}_t^2}{\mu}\right\}.
\end{equation}
Invoking the assumptions that $f(x_0)\le v/4$ (and thus $f(x^*)\le v/4$) and $15\bar{\varepsilon}_t^2/\mu\le v/4$, we get $f(x_{t+1})\le v/2$, completing the induction step. Note that we have also proved the desired convergence result.$\hfill\qed$

\subsection{Proof of Theorem~\ref{thm:noisy gradient for LQR}}
We begin by introducing some notations and definitions that will be used in this proof. Define an auxiliary function
\begin{equation}\label{eqn:smoothed version of J}
J^r(K)=\E[J(K+rU)],\forall K\in\calK,
\end{equation}
where $U$ is uniformly distributed over the set $\{U:\Vert U\Vert_F=1\}$.
In addition, introduce the following probabilistic event for all $t\in\{0,\dots,T-1\}$ and all $i\in[\ell_t]$:
\begin{equation}\label{eqn:event E_t}
\calE_t^i=\Big\{\Vert w_{t,k}^i\Vert\le\sqrt{5\trace(\Sigma_w)\log\frac{3N_t\ell_tT}{\delta}},\forall k\in\{0,\dots,N_t-1\}\Big\}.
\end{equation}

\begin{lemma}
\label{lemma:E_t holds with high probability}
The event $\calE_t^i$ defined in Eq.~\eqref{eqn:event E_t} holds with probability  $\bbP(\calE_t^i)\ge1-\delta/(3\ell_tT)$. 
\end{lemma}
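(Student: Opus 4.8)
The plan is to prove the bound for each individual disturbance vector $w_{t,k}^i$ and then take a union bound over the $N_t$ time indices $k\in\{0,\dots,N_t-1\}$. Fixing $t$ and $i$, it suffices to show that for each fixed $k$, $\bbP\big(\Vert w_{t,k}^i\Vert^2 > 5\trace(\Sigma_w)\log(3N_t\ell_tT/\delta)\big)\le \delta/(3N_t\ell_tT)$; summing this over the $N_t$ indices then gives $\bbP((\calE_t^i)^c)\le \delta/(3\ell_tT)$, which is exactly the claim. Since $w_{t,k}^i\sim\calN(0,\Sigma_w)$, the problem thus reduces entirely to a tail bound on the Euclidean norm of a single centered Gaussian vector with covariance $\Sigma_w$.

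First I would diagonalize the covariance. Writing the eigenvalues of $\Sigma_w$ as $\lambda_1,\dots,\lambda_n\ge0$, the vector $w_{t,k}^i$ equals in distribution a linear image of a standard Gaussian $\xi\sim\calN(0,I_n)$, so that $\Vert w_{t,k}^i\Vert^2 \overset{d}{=} \sum_{j=1}^n \lambda_j \xi_j^2$, a weighted sum of independent chi-square random variables with mean $\sum_j\lambda_j = \trace(\Sigma_w)$. I would then invoke a standard concentration bound for such sums (the Laurent--Massart inequality): for any $x\ge0$, with probability at least $1-e^{-x}$ one has $\sum_j\lambda_j\xi_j^2 \le \trace(\Sigma_w) + 2\Vert\lambda\Vert_2\sqrt{x} + 2\Vert\lambda\Vert_\infty x$, where $\lambda=(\lambda_1,\dots,\lambda_n)$.

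Next I would absorb the two norms of $\lambda$ into $\trace(\Sigma_w)$ using the elementary bounds $\Vert\lambda\Vert_\infty=\Vert\Sigma_w\Vert\le\trace(\Sigma_w)$ and $\Vert\lambda\Vert_2^2=\sum_j\lambda_j^2\le\Vert\Sigma_w\Vert\,\trace(\Sigma_w)\le\trace(\Sigma_w)^2$, both valid because the $\lambda_j$ are nonnegative. This yields the clean bound $\Vert w_{t,k}^i\Vert^2\le\trace(\Sigma_w)\,(1+2\sqrt{x}+2x)$ on the same event. Choosing $x=\log(3N_t\ell_tT/\delta)$ makes the per-step failure probability exactly $\delta/(3N_t\ell_tT)$, as required.

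The only point needing care — and the place where the argument could go wrong if constants are mishandled — is verifying that $1+2\sqrt{x}+2x\le 5x$ for the relevant value of $x$. Setting $u=\sqrt{x}$, this inequality is $(3u+1)(u-1)\ge0$, which holds precisely when $x\ge1$. Since $T,N_t,\ell_t\ge1$ and $0<\delta<1$, we have $3N_t\ell_tT/\delta>e$, so $x=\log(3N_t\ell_tT/\delta)\ge1$ and the simplification is justified; this is what produces the factor $5$ and the logarithmic argument $3N_t\ell_tT/\delta$ exactly. Combining the per-step bound with the union bound over the $N_t$ indices completes the proof, so I expect no genuine obstacle beyond choosing the right concentration inequality and tracking constants.
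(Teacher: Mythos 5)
Your proof is correct and follows essentially the same route as the paper: a per-index Gaussian norm tail bound at level $\delta/(3N_t\ell_tT)$ followed by a union bound over $k\in\{0,\dots,N_t-1\}$. The only difference is that you derive the single-vector bound $\Vert w\Vert^2\le 5\trace(\Sigma_w)\log(1/\delta')$ from scratch via Laurent--Massart (correctly, including the check that $x\ge1$, which mirrors the condition $\delta'<1/e$ in the paper's Lemma~\ref{lemma:gaussian}(a)), whereas the paper simply invokes that lemma as a citation to prior work.
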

\begin{proof}
By Lemma~\ref{lemma:gaussian}, for any $t\in\{0,\dots,T-1\}$ and any $k\in\{0,\dots,N_t-1\}$, it holds with probability at least $1-\delta/(3N_t\ell_tT)$ that $\Vert w_{t,k}^i\Vert\le\sqrt{5\trace(\Sigma_w)\log\frac{3N_t \ell_tT}{\delta}}$. Taking a union bound over all $k\in\{0,\dots,N_t-1\}$ completes the proof of the lemma.
\end{proof}

\begin{claim}\label{claim:induction}
Consider any $t\in\{0,\dots,T\}$. Then, (i) $J(K_{t^{\prime}})\le J/2$ for all $t^{\prime}\le t$ hold with probability at least $1-t\delta/T$; and (ii)  $\Vert\widehat{\nabla J(K_{t^{\prime}})}-\nabla J(K_{t^{\prime}})\Vert_F\le\varepsilon_{t^{\prime}}$ for all $t^{\prime}\le t$ hold with probability at least $1-(t+1)\delta/T$, where $\varepsilon_{t^{\prime}}$ is given in Eq.~\eqref{eqn:epsilon_t for LQR}.
\end{claim}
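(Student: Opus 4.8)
The plan is to prove Claim~\ref{claim:induction} by a single \emph{coupled} induction on $t$, in which the two statements reinforce one another. The crucial observation is that parts (i) and (ii) cannot be established separately: to invoke the gradient concentration bound at iteration $t'$ (part (ii)) one needs $K_{t'}\in\calK$ so that the structural estimates of Lemma~\ref{lemma:properties of J(K)} apply; conversely, to certify $J(K_{t'})\le J/2$, i.e.\ $K_{t'}\in\calK$ (part (i)), one needs the gradient errors at iterations $0,\dots,t'-1$ to satisfy $\Vert\widehat{\nabla J(K_s)}-\nabla J(K_s)\Vert_F\le\varepsilon_s$, since only then does the convergence guarantee of Theorem~\ref{thm:noisy gradient} (applied with $f=J$ and $v=J$) force the next iterate to remain in the sublevel set $\calK$. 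This also explains the off-by-one offset between the two success probabilities: certifying feasibility through iteration $t$ consumes only the $t$ concentration events at indices $0,\dots,t-1$, whereas certifying the gradient bound through iteration $t$ consumes the $t+1$ events at indices $0,\dots,t$.

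For the mechanics, the base case $t=0$ is immediate: $J(K_0)\le J/4\le J/2$ holds deterministically (matching the probability $1-0\cdot\delta/T$), and since $K_0\in\calK$, a single application of the concentration argument below yields $\Vert\widehat{\nabla J(K_0)}-\nabla J(K_0)\Vert_F\le\varepsilon_0$ with probability at least $1-\delta/T$. For the inductive step, assume the claim at $t$. Conditioning on the event underlying part (ii) at $t$ (the $t+1$ gradient bounds at indices $0,\dots,t$, which hold with probability $\ge 1-(t+1)\delta/T$), Theorem~\ref{thm:noisy gradient} guarantees $J(K_{t'})\le J/2$ for every $t'\le t+1$; this is precisely part (i) at $t+1$, with the advertised probability $1-(t+1)\delta/T$. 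On that same event $K_{t+1}\in\calK$, so the concentration argument applies at index $t+1$ and, by a union bound, contributes one further failure event of probability $\le\delta/T$, giving part (ii) at $t+1$ with probability $\ge 1-(t+2)\delta/T$. This closes the induction, and instantiating it at $t=T-1$ recovers the $1-\delta$ guarantee of Theorem~\ref{thm:noisy gradient for LQR}.

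The main obstacle, and the only genuinely technical ingredient, is the per-iteration concentration bound: \emph{conditioned on $K_t\in\calK$, one has $\Vert\widehat{\nabla J(K_t)}-\nabla J(K_t)\Vert_F\le\varepsilon_t$ with probability at least $1-\delta/T$}, with $\varepsilon_t$ as in Eq.~\eqref{eqn:epsilon_t for LQR}. Since the zeroth-order estimator $\widehat{\nabla J(K_t)}$ of Algorithm~\ref{algo:gradient estimate} is \emph{biased}---it uses a finite smoothing radius $r$ and finite-length trajectories---I would not bound it directly, but route through the smoothed surrogate $J^r$ of Eq.~\eqref{eqn:smoothed version of J} and split
\begin{equation*}
\Vert\widehat{\nabla J(K_t)}-\nabla J(K_t)\Vert_F\le\Vert\widehat{\nabla J(K_t)}-\nabla J^r(K_t)\Vert_F+\Vert\nabla J^r(K_t)-\nabla J(K_t)\Vert_F.
\end{equation*}
Here $\nabla J^r(K_t)$ is the exact mean of the idealized infinite-horizon estimator, so the second term is a pure smoothing bias, bounded via the local $(L,D)$-smoothness of Lemma~\ref{lemma:properties of J(K)}(d) and yielding the $Lr$ contribution. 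The first term I would further decompose into a finite-horizon truncation error---controlled by the geometric decay $\Vert(A+BK_t)^k\Vert\le\zeta\eta^k$ of Lemma~\ref{lemma:properties of J(K)}(b), which produces the $1/N_t$ pieces---and a zero-mean sampling fluctuation over the $\ell_t$ independent trajectories, which produces the $1/\sqrt{\ell_t}$ pieces; boundedness of $J$ on $\calK$ supplies the $J$ factor.

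The delicate part is the stochastic fluctuation: within a trajectory the summands $x_{t,k}^{i\top}(Q+\cdots)x_{t,k}^i\,U_t^i$ are neither bounded nor independent across $k$, since the state process is correlated through the dynamics. I would therefore condition on the high-probability event $\calE_t^i$ of Eq.~\eqref{eqn:event E_t}, on which (by Lemma~\ref{lemma:E_t holds with high probability}) every disturbance norm, and hence---using the geometric stability above---every state, is deterministically bounded. On $\calE_t^i$ one can set up a martingale-difference sequence (in $k$ within a trajectory and in $i$ across trajectories) with controlled increments and apply a Freedman/Azuma-type inequality to obtain the $1/\sqrt{\ell_t}$ and $1/N_t$ terms in Eq.~\eqref{eqn:epsilon_t for LQR}; note that the threshold $\log(3N_t\ell_tT/\delta)$ defining $\calE_t^i$ reappears exactly in the coefficient of the last term. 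Folding the $O(\delta/(N_t\ell_tT))$ failure probability of $\calE_t^i$ (union-bounded over $i\in[\ell_t]$) together with the martingale tails into the single $\delta/T$ budget for the index-$t$ event is what pins down the various logarithmic and constant factors; carefully tracking these so the final bound is exactly $\varepsilon_t$ is the most laborious, though conceptually routine, step.
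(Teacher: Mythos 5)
Your proposal is correct and follows essentially the same route as the paper: a coupled induction in which feasibility $K_t\in\calK$ licenses the structural bounds of Lemma~\ref{lemma:properties of J(K)}, the gradient error is split through the smoothed surrogate $J^r$ into an $Lr$ bias plus an estimator fluctuation, the fluctuation is handled by conditioning on the bounded-disturbance events $\calE_t^i$ and applying the vector Azuma inequality, and the convergence analysis of Theorem~\ref{thm:noisy gradient} then propagates $J(K_{t+1})\le J/2$. The probability bookkeeping you describe (the $t$ versus $t+1$ offset between parts (i) and (ii)) matches the paper's union-bound accounting exactly.
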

\begin{proof}
The base step in the induction holds since we assumed that $J(K_0)\le J/4$. Next, consider a given $t\in\{0,\dots,T-1\}$ and suppose $J(K_t)\le J/2$ for all $t^{\prime}\le t$ hold with probability at least $1-t\delta/T$. We will show that $J(K_{t+1})\le J/2$ holds with probability at least $1-(t+1)\delta/T$. Leveraging $J^r(\cdot)$ defined in Eq.~\eqref{eqn:smoothed version of J}, we first decompose the error of $\widehat{\nabla J(K_t)}$ as
\begin{align}
\Vert \widehat{\nabla J(K_t)}-\nabla J(K_t)\Vert_F&\le\Vert\widehat{\nabla J(K_t)}-\nabla J^r(K_t)\Vert_F+\Vert\nabla J^r(K_t)-\nabla J(K_t)\Vert_F,\label{eqn:noisy gradient decompose}
\end{align}
and we will upper bound the two terms on the right-hand side of the above relation separately in the sequel. To upper bound $\Vert\nabla J^r(K_t)-\nabla J(K_t)\Vert_F$, we show that 
\begin{equation}\label{eqn:upper bound 1}
\begin{aligned}
\Vert\nabla J^r(K_t)-\nabla J(K_t)\Vert_F&=\Vert\nabla\E_U[J(K_t+rU)]-\nabla J(K_t)\Vert_F\\
&\le\Vert\E_U[\nabla J(K_t+rU)-\nabla J(K_t)]\Vert_F\\
&\overset{(a)}\le\E_U\big[\Vert\nabla J(K_t+rU)-\nabla J(K_t)\Vert_F\big]\\
&\overset{(b)}\le Lr,
\end{aligned}
\end{equation}
where (a) follows from Jensen's inequality and the convexity of $\Vert\cdot\Vert$. To obtain (b), we first used the fact $K_t\in\calK$ from the induction hypothesis, and then invoked the local $(L,D)$-smoothness of $J(\cdot)$ given by Lemma~\ref{lemma:properties of J(K)}(d) based on the choice of $r$ that satisfies $r\Vert U\Vert_F\le D$, where recall that $\Vert U\Vert_F=1$. We next upper bound $\Vert\widehat{\nabla J(K_t)}-\nabla J^r(K_t)\Vert_F$. Denoting $c_{t,k}^i=x_{t,k}^{i\top}Qx_{t,k}^i+u_{t,k}^{i\top}Ru_{t,k}^i$, we see from Algorithm~\ref{algo:gradient estimate} that $\widehat{\nabla J(K_t)}=\frac{mn}{r\ell_t N_t}\sum_{i=1}^{\ell_t}\sum_{k=0}^{N_t-1}c_{t,k}^iU_t^i$. We may further decompose $\Vert\widehat{\nabla J(K_t)}-\nabla J^r(K_t)\Vert_F$ as 
\begin{align}\nonumber
\Vert \widehat{\nabla J(K_t)}-\nabla J^r(K_t)\Vert_F&\le\Big\Vert\frac{1}{\ell_tN_t}\sum_{i=1}^{\ell_t}\sum_{k=0}^{N_t-1}\frac{mn}{r}\big(c_{t,k}^i-J(K_t+rU_t^i)\big)U_t^i\Big\Vert_F\\\nonumber
&\qquad\qquad+\Big\Vert\frac{1}{\ell_tN_t}\sum_{i=1}^{\ell_t}\sum_{k=1}^{N_t-1}\big(\frac{mn}{r}J(K_t+rU_t^i)U_t^i-\nabla J^r(K_t)\big)\Big\Vert_F\\\nonumber
&\le\underbrace{\frac{1}{\ell_t}\Big\Vert\sum_{i=1}^{\ell_t}\sum_{k=0}^{N_t-1}\frac{mn}{N_tr}\big(c_{t,k}^i-J(K_t+rU_t^i)\big)U_t^i\Big\Vert_F}_{E_t^1}\\
&\qquad\qquad+\underbrace{\frac{1}{\ell_t}\Big\Vert\sum_{i=1}^{\ell_t}\big(\frac{mn}{r}J(K_t+rU_t^i)U_t^i-\nabla J^r(K_t)\big)\Big\Vert_F}_{E_t^2}.\label{eqn:noisy gradient decompose 2}
\end{align}
To upper bound $E_t^2$ in \eqref{eqn:noisy gradient decompose 2}, we first note that $K_t\in\calK$ (since $J(K_t)\le J/2$ by the induction hypothesis) and obtain that for any $i\in[\ell_t]$, 
\begin{equation*}
\begin{aligned}
J(K_t+rU_t^i)-J(K_t)\overset{(a)}\le\bar{G}\Vert rU_t^i\Vert_F\overset{(b)}\le \frac{J}{2},
\end{aligned}
\end{equation*}
where (a) follows from the $(D,\bar{G})$-local Lipschitz of $J(\cdot)$ in Lemma~\ref{lemma:properties of J(K)}(d) via the choice of $r$ that satisfies $r\Vert U_t^i\Vert_F=r\le D$, and (b) follows from the choice of $r$ that satisfies $r\le J/(2\bar{G})$. Hence, we have $J(K_t+rU_t^i)\le J$ and $(K_t+rU_t^i)\in\calK$. Now, recalling from  Remark~\ref{remark:random smoothing} that $U_t^i$ is equal in distribution to a random matrix selected uniformly from the set $\{U:\Vert U\Vert_F=1\}$, one can obtain from \cite[Lemma~2.1]{flaxman2005online} that 
\begin{equation*}
\nabla J^r(K_t)=\E\Big[\frac{mn}{r}J(K_t+rU_t^i)U_t^i\big|K_t\Big],\forall i\in[\ell_t].
\end{equation*}
Denoting $X_t^i=\frac{mn}{r}J(K_t+rU_t^i)U_t^i-\nabla J^r(K_t)$, we obtain from our arguments above that 
\begin{equation*}
\Vert X_t^i\Vert_F\le\big\Vert\frac{mn}{r}J(K_t+rU_t^i)U_t^i\big\Vert_F+\Vert\nabla J^r(K_t)\Vert_F\le\frac{2mnJ}{r},
\end{equation*}
and $\E[X_t^i|K_t]=0$ for all $i\in[\ell_t]$. Thus, we can apply Lemma~\ref{lemma:azuma} and obtain that with probability at least $1-\delta/(3T)$,
\begin{equation}\label{eqn:upper bound on E_t^2}
\begin{aligned}
E_t^2=\frac{1}{\ell_t}\big\Vert\sum_{i=1}^{\ell_t}X_t^i\big\Vert\le\frac{2mnJ}{\ell_t r}\sqrt{2\ell_t\log\frac{45T}{\delta}}.
\end{aligned}
\end{equation}

To upper bound $E_1$ in \eqref{eqn:noisy gradient decompose 2}, we first consider 
\begin{equation*}
\widetilde{E}_t^1=\frac{1}{\ell_t}\Big\Vert\sum_{i=1}^{\ell_t}\sum_{k=0}^{N_t-1}\frac{mn}{N_tr}\big(\mathbb{1}\{\calE_t^i\}c_{t,k}^i-J(K_t+rU_t^i)\big)U_t^i\Big\Vert_F.
\end{equation*}
Note that $\widetilde{E}_t^1=E_t^1$ holds with probability $\bbP(\calE_t)$, where $\calE_t\triangleq\cap_{i\in[\ell_t]}\calE_t^i$. Using Lemma~\ref{lemma:E_t holds with high probability}, we have via a union bound that $\bbP(\calE_t)\ge1-\delta/(3T)$. Thus, an upper bound on $\widetilde{E}_t^1$ will give an upper bound on $E_t^1$ that holds with probability at least $1-\delta/(3T)$. To this end, we upper bound $\widetilde{E}_t^1$ and again decompose 
\begin{multline}\label{eqn:upper bound on tilde E_t^1}
\widetilde{E}_t^1\le\frac{1}{\ell_t}\Big\Vert\sum_{i=1}^{\ell_t}\underbrace{\sum_{k=0}^{N_t-1}\frac{mn}{N_tr}\big(\mathbb{1}\{\calE_t\}c_{t,k}^i-\E[\mathbb{1}\{\calE_t\}c_{t,k}^i|K_t,U_t^i]\big)U_t^i}_{Y_t^i}\Big\Vert_F\\+\frac{1}{\ell_t}\sum_{i=1}^{\ell_t}\Big|\underbrace{\sum_{k=0}^{N_t-1}\frac{mn}{N_tr}\big(\E[\mathbb{1}\{\calE_t\}c_{t,k}^i|K_t,U_t^i]-J(K_t+rU_t^i)\big)}_{Z_t^i}\Big|,
\end{multline}
where we used the fact that $\Vert U_t^i\Vert_F=1$. Note that $\E[Y_t^i|K_t,U_t^i]=0$ for all $i\in[\ell_t]$. Moreover, we have that under the event $\calE_t$,
\begin{equation*}
\begin{aligned}
c_{t,k}^i&=x_{t,k}^{i\top}\big(Q+(K_t+rU_{t}^i)^{\top}R(K_t+rU_t^i)\big)x_{t,k}^i\\
&\le\Vert Q+(K_t+rU_{t}^i)^{\top}R(K_t+rU_t^i)\Vert\Vert x_{t,k}^i\Vert^2\\
&\overset{(a)}\le(1+\zeta^2)\beta_1\Vert x_{t,k}^i\Vert^2\\
&\overset{(b)}\le\frac{(1+\zeta^2)\beta_1\zeta^2}{(1-\eta)^2}\max_{0\le k\le N_t-1}\Vert w_{t,k}^i\Vert^2\\
&\overset{(c)}\le\frac{5(1+\zeta^2)\beta_1\zeta^2}{(1-\eta)^2}\trace(\Sigma_w)\log\frac{3N_t\ell_tT}{\delta},\forall i\in[\ell_t]
\end{aligned}
\end{equation*}
where (a) follows from the fact $(K_t+rU_t^i)\in\calK$ and Lemma~\ref{lemma:properties of J(K)}(b), and (b) follows from Lemma~\ref{lemma:upper bound on state}, and (c) follows from Eq.~\eqref{eqn:event E_t}. Thus, we get from the definition of $Y_t^i$ that  
\begin{equation}
\begin{aligned}
\Vert Y_t^i\Vert_F&\le\frac{2mn}{N_tr}\sum_{k=0}^{N_t-1}\mathbb{1}\{\calE_t\}c_{t,k}^i\Vert U_{t}^i\Vert_F\\
&\le\frac{10mn(1+\zeta^2)\beta_1\zeta^2}{r(1-\eta)^2}\trace(\Sigma_w)\log\frac{3N_t\ell_tT}{\delta}.
\end{aligned}
\end{equation}
We can now apply Lemma~\ref{lemma:azuma} and obtain with probability at least $1-\delta/(3T)$,
\begin{equation}\label{eqn:upper bound on sum Y_t^i}
\frac{1}{\ell_t}\big\Vert\sum_{i=1}^{\ell_t}Y_t^i\Vert_F\le\frac{10mn(1+\zeta^2)\beta_1\zeta^2}{\ell_t(1-\eta)^2}\trace(\Sigma_w)\log\frac{3N_t\ell_tT}{\delta}\sqrt{2\ell_t\log\frac{45T}{\delta}}.
\end{equation}

To upper bound $Z_t^i$, we first observe that $\mathbb{1}\{\calE_t\}c_{t,k}^i$ is equal (with probability one) to the cost corresponding to the system $x_{t,s+1}^i=(A+B(K_t+rU_t^i))x_{t,s}^i+\widetilde{w}_{t,s}^i$ for $s=0,1,\dots$, where $\widetilde{w}_{t,s}^i=\mathbb{1}\{\calE_t\}w_{t,s}^i$. We then write 
\begin{equation}\label{eqn:upper bound on Z_t^i}
\begin{aligned}
|Z_t^i|&=\frac{mn}{N_tr}\sum_{k=0}^{N_t-1}\Big|\E[\mathbb{1}\{\calE_t\}c_{t,k}^i|K_t,U_t^i]-\widetilde{J}(K_t+rU_t^i)+\widetilde{J}(K_t+rU_t^i)-J(K_t+rU_t^i)\Big|\\
&\le\underbrace{\frac{mn}{N_tr}\sum_{k=0}^{N_t-1}\Big|\E[\mathbb{1}\{\calE_t\}c_{t,k}^i|K_t,U_t^i]-\widetilde{J}(K_t+rU_t^i)\Big|}_{Z_{t,1}^i}+\underbrace{\frac{mn}{N_tr}\sum_{k=0}^{N_t-1}\Big|\widetilde{J}(K_t+rU_t^i)-J(K_t+rU_t^i)\Big|}_{Z_{t,2}^i},
\end{aligned}
\end{equation}
where $\widetilde{J}(\cdot)$ is the cost defined as \eqref{eqn:LQR obj J(K)} when the disturbance $w_{t,k}^i$ is replaced by $\widetilde{w}_{t,k}^i$ described above. Similarly to Eq.~\eqref{eqn:expression for J(K)}, we have $\widetilde{J}(K)=\trace(P_K\widetilde{\Sigma}_w)$ for any $K\in\calK$, where $P_K$ is the solution to the Ricatti equation in Eq.~\eqref{eqn:DARE} and $\widetilde{\Sigma}_w=\E[\widetilde{w}_k\widetilde{w}_k^{\top}]$ is the covariance of the disturbance $\widetilde{w}_k=\mathbb{1}\{\calE_t\}w_k$ \cite{bertsekas2015dynamic}. Using Lemma~\ref{lemma:cost relation}, we get 
\begin{equation*}
\begin{aligned}
Z_{t,1}^1&\le\frac{mn}{N_tr}\frac{2\beta_1\zeta^6}{(1-\eta)^3}\max_{0\le k\le N_t-1}\Vert \widetilde{w}_{t,k}^i\Vert^2\\
&\le\frac{10mn\beta_1\zeta^6}{N_tr(1-\eta)^3}\trace(\Sigma_w)\log\frac{3N_t\ell_tT}{\delta},
\end{aligned}
\end{equation*}
where we recalled the definition of $\calE_t^i$ in Eq.~\eqref{eqn:event E_t}. Moreover, denoting $P_{K+rU_t^i}$ as the solution to the Ricatti equation in Eq.~\eqref{eqn:DARE} associated with $K_t+rU_t^i$, we have 
\begin{equation*}
\begin{aligned}
Z_{t,2}^i&=\frac{mn}{r}\Big|\widetilde{J}(K_t+rU_t^i)-J(K_t+rU_t^i)\Big|\\
&=\frac{mn}{r}\Big|\trace\big(P_{K_t+rU_t^i}(\widetilde{\Sigma}_w-\Sigma_w)\big)\Big|\\
&\overset{(a)}=\frac{mn}{r}\trace\big(P_{K_t+rU_t^i}(\E\big[\mathbb{1}\{(\calE_t^{i})^c\}w_{t,k}^iw_{t,k}^{i\top}\big])\big)\\
&\overset{(b)}\le\frac{mn}{r}\big\Vert P_{K_t+rU_t^i}\big\Vert\trace\big(\E\big[\mathbb{1}\{(\calE_t^{i})^c\}w_{t,k}^iw_{t,k}^{i\top}\big]\big)\\
&\overset{(c)}\le\frac{2mn\beta_1\zeta^4}{r(1-\eta)}\E\big[\mathbb{1}\{(\calE_t^{i})^c\}\Vert w_{t,k}^i\Vert^2\big]\\
&\overset{(d)}\le\frac{10mn\beta_1\zeta^4}{3TN_t r(1-\eta)}\trace(\Sigma_w)\log\frac{27TN_t}{\delta},
\end{aligned}
\end{equation*}
where (a) follows from the fact that $\Sigma_w-\widetilde{\Sigma}_w=\E[\mathbb{1}\{(\calE_t^{i})^c\}w_{t,k}^{i}w_{t,k}^{i\top}]\succeq0$, (b) follows from standard trace inequality \cite{wang1986trace}, (c) follows from Lemma~\ref{lemma:upper bound on state} via the fact that $(K_t+rU_t^i)\in\calK$ as we argued above, and (d) follows from Lemma~\ref{lemma:gaussian} via the fact $\bbP((\calE_t^i)^c)\le\delta/(3N_tT)$ by Lemma~\ref{lemma:E_t holds with high probability}. Hence, we have shown an upper bound on $|Z_t^i|$ as per~\eqref{eqn:upper bound on Z_t^i}. 

Now, recalling that \eqref{eqn:upper bound on sum Y_t^i} holds with probability at least $1-\delta/(3T)$, we can use \eqref{eqn:upper bound on sum Y_t^i} and \eqref{eqn:upper bound on Z_t^i} in \eqref{eqn:upper bound on tilde E_t^1} to obtain an upper bound on $\widetilde{E}_t^1$ that holds with probability at least $1-\delta/(3T)$. Since $\widetilde{E}_t^1=E_t^1$ holds with probability $\bbP(\calE_t)\ge1-\delta/(3T)$ as we argued above, a union bound implies an upper bound on $E_t^1$ as per~\eqref{eqn:upper bound on tilde E_t^1} that holds with probability at least $1-2\delta/(3T)$. Additionally, recalling that we have shown that the upper bound on $E_t^2$ in \eqref{eqn:upper bound on E_t^2} holds with probability at least $1-\delta/(3T)$, we can further apply a union bound to combine the upper bound on $\widetilde{E}_t^1$ in \eqref{eqn:upper bound on tilde E_t^1} and the upper bound on $E_t^2$ in \eqref{eqn:upper bound on E_t^2}, which gives the upper bound $\Vert \widehat{\nabla J(K_t)}-\nabla J^r(K_t)\Vert_F\le\varepsilon_t$ as per~\eqref{eqn:noisy gradient decompose 2} that holds with probability as least $1-\delta/T$, where $\varepsilon_t$ is given by Eq.~\eqref{eqn:epsilon_t for LQR}. Finally, since all the arguments above for the induction step rely on the induction hypothesis $J(K_t)\le J/2$ that holds with probability at least $1-t\delta/T$, another union bound shows that the upper bound  $\Vert \widehat{\nabla J(K_t)}-\nabla J^r(K_t)\Vert_F\le\varepsilon_t$ holds with probability at least $1-(t+1)\delta/T$. By our choices of $N_t$ and $\ell_t$ given in Theorem~\ref{thm:noisy gradient for LQR}, we have that $15\bar{\varepsilon}_t^2/\mu\le J/4$, where $\mu$ is given in Lemma~\ref{lemma:properties of J(K)}(e) and $\bar{\varepsilon}_t$ is defined as Eq.~\eqref{eqn:def of bar epsilon_t} using $\varepsilon_t$ in Eq.~\eqref{eqn:epsilon_t for LQR}. Hence, one can now use similar arguments to those in the proof of Theorem~\ref{thm:noisy gradient} leading up to \eqref{eqn:distance to f(x^*)} to show that $J(K_{t+1})\le J/2$ holds with probability at least $1-(t+1)\delta/T$, completing the induction step of the proof of part~(i) in Claim~\ref{claim:induction}. Note that part~(ii) of Claim~\ref{claim:induction} was readily proved by our above arguments.
\end{proof}

\begin{proof}
({\bf Proof of Theorem~\ref{thm:noisy gradient}}) We see from Claim~\ref{claim:induction} that the following hold with probability at least $1-\delta$: $J(K_t)\le v/2$ for all $t\in\{0,\dots,T\}$, and $\Vert\widehat{\nabla J(K_t)}-\nabla J(K_t)\Vert_F\le\varepsilon_t$ for all $t\in\{0,\dots,T-1\}$. Based on the choices of $N_t,\ell_t$ such that $15\bar{\varepsilon}_t^2/\mu\le J/4$ as we argued above, the results in Theorem~\ref{thm:noisy gradient for LQR} follow directly from the arguments in the proof of Theorem~\ref{thm:noisy gradient}
\end{proof}

\section{Auxiliary Lemmas}\label{app:aux}
\begin{lemma}\label{lemma:local smooth 2nd}
Let $f:\mathbb{R}^d\to\mathbb{R}$ be locally $(L,D)$-smooth over $\mathcal{X}\subseteq\mathbb{R}^d$. Then, for any $x\in\mathcal{X}$ and any $y\in\mathbb{R}^d$ with $\Vert y-x \Vert\le D$, 
\begin{align}
f(y) \leq f(x) + \langle \nabla f(x), y-x \rangle + \frac{L}{2} \Vert y - x \Vert^2.
\end{align}
\end{lemma}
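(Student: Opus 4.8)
The plan is to reproduce the classical proof of the descent lemma (the quadratic upper bound implied by Lipschitz gradients), taking care to invoke the smoothness estimate only at points covered by the \emph{local} hypothesis of Definition~\ref{def:local smooth}.

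First I would introduce the one-dimensional restriction $\phi(t) \triangleq f(x + t(y-x))$ for $t\in[0,1]$ and apply the fundamental theorem of calculus, which gives
$$f(y) - f(x) = \int_0^1 \langle \nabla f(x + t(y-x)),\, y-x \rangle\, dt.$$
Subtracting $\langle \nabla f(x), y-x\rangle$ from both sides yields
$$f(y) - f(x) - \langle \nabla f(x), y-x\rangle = \int_0^1 \langle \nabla f(x + t(y-x)) - \nabla f(x),\, y-x \rangle\, dt.$$

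Next I would bound the integrand by Cauchy--Schwarz as $\|\nabla f(x + t(y-x)) - \nabla f(x)\|\,\|y-x\|$ and then apply local smoothness. The one place where locality matters — and hence the only step requiring care — is verifying that the estimate of Definition~\ref{def:local smooth} is admissible at every intermediate point. For each $t\in[0,1]$, the first argument $x$ lies in $\mathcal{X}$ by hypothesis, and the second argument satisfies $\|(x + t(y-x)) - x\| = t\,\|y-x\| \le \|y-x\| \le D$; hence Definition~\ref{def:local smooth} applies and gives $\|\nabla f(x + t(y-x)) - \nabla f(x)\| \le L\, t\,\|y-x\|$. Substituting this into the integral and computing $\int_0^1 L t \,\|y-x\|^2\, dt = \tfrac{L}{2}\|y-x\|^2$ completes the argument.

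There is no genuine obstacle here: the result is a routine adaptation of the global descent lemma. The only subtlety worth flagging is that the bound along the whole segment rests on the monotonicity of the distance $t\,\|y-x\|$ in $t$, so that controlling the displacement at the endpoint $y$ guarantees the entire segment stays within the ball of radius $D$ about $x$. This is precisely what lets the local-smoothness definition — which fixes one endpoint in $\mathcal{X}$ and restricts the other to a $D$-ball — be invoked uniformly over $t\in[0,1]$.
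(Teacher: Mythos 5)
Your proof is correct and follows essentially the same route as the paper's: the fundamental theorem of calculus along the segment, Cauchy--Schwarz, and the local smoothness bound applied at each intermediate point $x+t(y-x)$, which is admissible since $t\Vert y-x\Vert\le D$. The only difference is that you make the admissibility check explicit, which the paper leaves to a brief remark.
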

\begin{proof}
The proof follows from adapting the standard result in e.g. \cite[Lemma~3.4]{bubeck2015convex}. Fix any $x \in \mc{X}$ and $y \in \mathbb{R}^d$ such that $\Vert y-x \Vert \leq D$. We may write $f(y)-f(x)$ as an integral and obtain
\begin{align*}
&f(y)-f(x)-\langle \nabla f(x), y-x \rangle\\
=&\int_{0}^1\langle\nabla f(x+t(y-x)),y-x\rangle dt-\langle\nabla f(x),y-x\rangle \\
\overset{(a)}\le&\int_0^1\Vert \nabla f(x+t(y-x))-\nabla f(x))\Vert \Vert y-x \Vert dt\\
\overset{(b)}\le&\int_{0}^1 Lt \Vert y-x \Vert^2dt\\
=&\frac{L}{2}\Vert y-x \Vert^2,
\end{align*}
where (a) follows from the Cauchy-Schwarz inequality, and (b) follows from Definition~\ref{def:local smooth} via the fact that $\Vert x+t(y-x)-x \Vert \le D$ for all $t\in[0,1]$. 
\end{proof}

\begin{lemma}\label{lemma:upper bound on state}
Let $K\in\calK$ with $\calK$ defined in Eq.~\eqref{eqn:feasible set for J(K)}. Then, for any $k\ge1$, the state of system~\eqref{eqn:LTI} given by $x_{s+1}=Ax_s+Bu_s+w_s$ for $s=0,\dots,k-1$ with $u_s=Kx_s$ and $x_0=0$ satisfies that 
\begin{equation}
\Vert x_{k}\Vert\le\frac{\zeta}{1-\eta}\max_{0\le s\le k-1}\Vert w_s\Vert,
\end{equation}
where $\zeta$ and $\eta$ are given in Lemma~\ref{lemma:properties of J(K)}. Moreover, the solution to the Ricatti equation \eqref{eqn:DARE}, i.e., $P_K\in\mathbb{S}_{++}^n$, satisfies that $$\Vert P_K\Vert\le\frac{2\beta_1\zeta^4}{1-\eta}.$$
\end{lemma}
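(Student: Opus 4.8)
The plan is to exploit the closed-loop convolution representation of the dynamics together with the decay estimate $\Vert(A+BK)^j\Vert\le\zeta\eta^j$ supplied by Lemma~\ref{lemma:properties of J(K)}(b). First I would observe that under $u_s=Kx_s$ the recursion in \eqref{eqn:LTI} becomes $x_{s+1}=(A+BK)x_s+w_s$, so that with $x_0=0$ we obtain the explicit representation $x_k=\sum_{s=0}^{k-1}(A+BK)^{k-1-s}w_s$. Taking norms, applying the triangle inequality and submultiplicativity, and invoking the bound $\Vert(A+BK)^{k-1-s}\Vert\le\zeta\eta^{k-1-s}$ from Lemma~\ref{lemma:properties of J(K)}(b) yields $\Vert x_k\Vert\le\zeta\sum_{s=0}^{k-1}\eta^{k-1-s}\Vert w_s\Vert$.

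Next I would factor out the largest noise norm and sum the resulting geometric series. Reindexing by $j=k-1-s$ gives $\sum_{s=0}^{k-1}\eta^{k-1-s}=\sum_{j=0}^{k-1}\eta^j\le\sum_{j=0}^{\infty}\eta^j=1/(1-\eta)$, where I use $0<\eta<1$ (guaranteed by $\eta=1-1/(2\zeta^2)$ with $\zeta\ge1$). This immediately produces the claimed state bound $\Vert x_k\Vert\le\frac{\zeta}{1-\eta}\max_{0\le s\le k-1}\Vert w_s\Vert$. This first part is entirely routine.

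For the bound on $P_K$, the key observation is that for a fixed stabilizing $K$ the Riccati equation in \eqref{eqn:DARE} is actually a discrete Lyapunov equation, whose unique solution admits the series representation $P_K=\sum_{j=0}^{\infty}\big((A+BK)^{\top}\big)^j(Q+K^{\top}RK)(A+BK)^j$; I would confirm this by direct substitution into \eqref{eqn:DARE} (the sum telescopes correctly because $A+BK$ is Schur-stable, so the series converges). Taking spectral norms and using $\Vert(A+BK)^j\Vert^2\le\zeta^2\eta^{2j}$ together with $\Vert Q+K^{\top}RK\Vert\le\Vert Q\Vert+\Vert K\Vert^2\Vert R\Vert\le\beta_1(1+\zeta^2)$ (from $Q,R\preceq\beta_1 I$ and $\Vert K\Vert\le\zeta$ in Lemma~\ref{lemma:properties of J(K)}(b)) gives $\Vert P_K\Vert\le\beta_1(1+\zeta^2)\zeta^2\sum_{j=0}^{\infty}\eta^{2j}=\beta_1(1+\zeta^2)\zeta^2/(1-\eta^2)$.

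The only mildly delicate step is then simplifying this constant to match the stated bound. Here I would use $\zeta\ge1$ to write $1+\zeta^2\le2\zeta^2$, and $0<\eta<1$ to write $1-\eta^2=(1-\eta)(1+\eta)\ge1-\eta$, so that $\Vert P_K\Vert\le\frac{2\beta_1\zeta^4}{1-\eta^2}\le\frac{2\beta_1\zeta^4}{1-\eta}$, as required. I expect no genuine obstacle in this lemma; the whole argument is a geometric-series computation, and the only thing demanding care is tracking the elementary inequalities so that the universal constant comes out exactly as $2$. I note that this $P_K$ estimate is also already recorded in Lemma~\ref{lemma:properties of J(K)}(a), so the second claim could alternatively be obtained by a direct citation rather than the self-contained Lyapunov argument above.
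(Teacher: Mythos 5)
Your proof of the state bound is the same as the paper's: write $x_k=\sum_{s=0}^{k-1}(A+BK)^{k-1-s}w_s$, apply $\Vert(A+BK)^j\Vert\le\zeta\eta^j$ from Lemma~\ref{lemma:properties of J(K)}(b), and sum the geometric series. The only difference is in the second claim: the paper does not prove the bound on $\Vert P_K\Vert$ at all, but simply points to the proof of Lemma~40 in \cite{cassel2020logarithmic} (consistent with your closing remark that the estimate is already recorded in Lemma~\ref{lemma:properties of J(K)}(a)). Your self-contained Lyapunov-series argument is correct and fills in what the paper outsources: for a stabilizing $K$ the Riccati equation in \eqref{eqn:DARE} is a discrete Lyapunov equation with solution $P_K=\sum_{j\ge0}\big((A+BK)^{\top}\big)^j(Q+K^{\top}RK)(A+BK)^j$, and the chain $\Vert P_K\Vert\le\beta_1(1+\zeta^2)\zeta^2/(1-\eta^2)\le 2\beta_1\zeta^4/(1-\eta)$ checks out, since $1+\zeta^2\le2\zeta^2$ for $\zeta\ge1$ and $1-\eta^2\ge1-\eta$ for $\eta\in(0,1)$. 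What your version buys is independence from the external reference and explicit visibility of where the constant $2$ comes from; what the paper's citation buys is brevity. Both are valid.
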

\begin{proof}
First, one can obtain from \eqref{eqn:LTI} that
\begin{equation}
x_k=(A+BK_t)^{k}x_0+\sum_{s=0}^{k-1}(A+BK_t)^{k-(s+1)}w_{s}.
\end{equation}
Recalling from  Lemma~\ref{lemma:properties of J(K)}(b) that $\Vert(A+BK_t)^k\Vert\le\zeta\eta^k$, we further obtain 
\begin{equation}
\Vert x_{k}\Vert\le\zeta\sum_{s=0}^{k-1}\eta^k\Vert w_s\Vert\le\frac{\zeta}{1-\eta}\max_{0\le s\le k-1}\Vert w_s\Vert.
\end{equation}
The upper bound on $\Vert P_K\Vert$ is proved in the proof of \cite[Lemma~40]{cassel2020logarithmic}.
\end{proof}

\begin{lemma}\label{lemma:cost relation}
Let $K\in\calK$ with $\calK$ defined in Eq.~\eqref{eqn:feasible set for J(K)}. Consider the state of system~\eqref{eqn:LTI} given by $x_{s+1}=(A+BK)x_s+w_s$ for $s=0,\dots,k-1$ with $x_0=0$. Then, for any $k\ge1$,
\begin{equation*}
\Big|\sum_{s=1}^k\E\big[x_s^{\top}(Q+K^{\top}RK)x_s\big]-kJ(K)\Big|\le\frac{2\beta_1\zeta^6}{(1-\eta)^3}\E\Big[\max_{0\le s\le k}\Vert w_s\Vert^2\Big],
\end{equation*}
where $J(\cdot)$ is the cost of $K$ given in \eqref{eqn:LQR obj J(K)}, $\beta_1$ is given in \eqref{eqn:parameters in J(K)} and $\eta,\zeta$ are given in Lemma~\ref{lemma:properties of J(K)}.
\end{lemma}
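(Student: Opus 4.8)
The plan is to use the Riccati matrix $P_K$ as a Lyapunov certificate and telescope. Write $M = A+BK$ and $S = Q + K^{\top}RK$, so the closed-loop recursion is $x_{s+1}=Mx_s+w_s$ and the cost matrix appearing in the statement is exactly $S$. Recall from \eqref{eqn:DARE} that $P_K$ satisfies $P_K = S + M^{\top}P_K M$, and from \eqref{eqn:expression for J(K)} that $J(K)=\trace(P_K\Sigma_w)$. Define the quadratic form $V(x)=x^{\top}P_K x$. The first step is a one-step identity: expanding $V(x_{s+1})=(Mx_s+w_s)^{\top}P_K(Mx_s+w_s)$ and substituting $M^{\top}P_K M = P_K - S$ gives
\[
x_s^{\top}S x_s = V(x_s)-V(x_{s+1}) + 2x_s^{\top}M^{\top}P_K w_s + w_s^{\top}P_K w_s.
\]
Taking expectations, the cross term vanishes because $w_s$ is zero-mean and independent of $x_s$ (which is a function of $w_0,\dots,w_{s-1}$), while $\E[w_s^{\top}P_K w_s]=\trace(P_K\Sigma_w)=J(K)$. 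Hence $\E[x_s^{\top}Sx_s]=\E[V(x_s)]-\E[V(x_{s+1})]+J(K)$.

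The second step is to telescope this identity over $s=1,\dots,k$, which collapses the sum of consecutive differences and leaves the exact expression
\[
\sum_{s=1}^{k}\E[x_s^{\top}Sx_s]-kJ(K)=\E[V(x_1)]-\E[V(x_{k+1})].
\]
The problem thus reduces to bounding the two boundary terms, both of which are nonnegative since $P_K\succeq0$. This is the conceptual crux: telescoping against the Lyapunov function shows that the transient error between the finite-horizon sum and $kJ(K)$ is governed solely by the initial and terminal energies, with no accumulation.

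The third step bounds each boundary term. Since $x_0=0$, we have $x_1=w_0$, so $V(x_1)=w_0^{\top}P_K w_0\le\Vert P_K\Vert\,\Vert w_0\Vert^2\le\Vert P_K\Vert\max_{0\le s\le k}\Vert w_s\Vert^2$. For the terminal term I would invoke Lemma~\ref{lemma:upper bound on state} to obtain $\Vert x_{k+1}\Vert\le\frac{\zeta}{1-\eta}\max_{0\le s\le k}\Vert w_s\Vert$, whence $V(x_{k+1})\le\Vert P_K\Vert\frac{\zeta^2}{(1-\eta)^2}\max_{0\le s\le k}\Vert w_s\Vert^2$. Because the two boundary terms are nonnegative, $|\E[V(x_1)]-\E[V(x_{k+1})]|$ is at most the larger of the two; using $\Vert P_K\Vert\le 2\beta_1\zeta^4/(1-\eta)$ from Lemma~\ref{lemma:upper bound on state} together with $\zeta\ge1$ and $\eta\in(0,1)$ to absorb constants, the terminal estimate dominates and produces exactly $\frac{2\beta_1\zeta^6}{(1-\eta)^3}\E[\max_{0\le s\le k}\Vert w_s\Vert^2]$ after taking expectations, as claimed.

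The main obstacle is less a hard calculation than choosing the right decomposition: a naive covariance expansion, $\E[x_sx_s^{\top}]=\sum_i M^i\Sigma_w(M^{\top})^i$, does yield a bound (in terms of $\trace(\Sigma_w)$ via $\Vert M^i\Vert\le\zeta\eta^i$), but with looser powers of $\zeta$ and $(1-\eta)^{-1}$ and a messier double sum; the Lyapunov/telescoping route is what delivers the stated constant precisely. The single load-bearing observation is that the cross term $\E[x_s^{\top}M^{\top}P_K w_s]$ vanishes, which hinges on the zero-mean, independent structure of the disturbances and is what lets the per-step cost concentrate around $J(K)$ with error controlled entirely by the boundary values of $V$.
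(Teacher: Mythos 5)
Your proof is correct and takes essentially the same route as the paper, which telescopes the Lyapunov identity $x^\top P_K x = x^\top(Q+K^\top RK)x - J(K) + \E[\,x'^{\top} P_K x' \mid x\,]$ (deferred there to the proof of Lemma~40 of \cite{cassel2020logarithmic}) and then bounds the terminal term using Lemma~\ref{lemma:upper bound on state} and the bound $\Vert P_K\Vert\le 2\beta_1\zeta^4/(1-\eta)$. If anything, your write-up is more careful on one point: the paper's displayed equality suppresses the initial boundary term $\E[x_1^\top P_K x_1]=J(K)$ that your telescoping correctly retains, and your max-of-two-nonnegative-terms argument absorbs it into the same constant $2\beta_1\zeta^6/(1-\eta)^3$ using $\zeta\ge1$ and $1-\eta\le1$.
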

\begin{proof}
Following the arguments in the proof of \cite[Lemma~40]{cassel2020logarithmic}, we have 
\begin{equation*}
\begin{aligned}
\Big|\sum_{s=1}^k\E\big[x_s^{\top}(Q+K^{\top}RK)x_s\big]-kJ(K)\Big|=\E\big[x_{k+1}^{\top}P_Kx_{k+1}\big]\le\Vert P_K\Vert\E[x_{k+1}^{\top}x_{k+1}],
\end{aligned}
\end{equation*}
where $P_K\in\bbS_{++}^n$ is the positive definite to the Ricatti equation given by Eq.~\eqref{eqn:DARE}.  Now, using the result of Lemma~\ref{lemma:upper bound on state}, we finish the proof of Lemma~\ref{lemma:cost relation}.
\end{proof}

\begin{lemma}\label{lemma:gaussian}
Let $w\sim\calN(0,\Sigma_w)$ with $\Sigma_w\in\mathbb{S}_{++}^n$. (a) For any $0<\delta<1/e$, the following holds with probability at least $1-\delta$:
\begin{equation*}
\Vert w\Vert\le\sqrt{5\trace(\Sigma_w)\log\frac{1}{\delta}}.
\end{equation*}
(b) Let $0<\delta^{\prime}<1$ and let $\calE$ be a probabilistic event that holds with probability $\bbP(\calE)\le \delta^{\prime}$. Then,
\begin{equation*}
\E\big[\mathbb{1}\{\calE\}\Vert w\Vert^2\big]\le 5\trace(\Sigma_w)\delta^{\prime}\log\frac{9}{\delta^{\prime}}.
\end{equation*}
\end{lemma}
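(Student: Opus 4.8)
The plan is to prove part (a) by reducing $\Vert w\Vert^2$ to a weighted sum of chi-squared variables and applying a standard tail bound, and then to derive part (b) from part (a) through a tail-integration (layer-cake) argument. For part (a), I would write $w=\Sigma_w^{1/2}z$ with $z\sim\calN(0,I_n)$, so that $\Vert w\Vert^2=z^\top\Sigma_w z=\sum_{i=1}^n\lambda_i\xi_i^2$, where $\lambda_1,\dots,\lambda_n\ge0$ are the eigenvalues of $\Sigma_w$ and the $\xi_i$ are i.i.d.\ standard normals obtained by rotating $z$ into the eigenbasis of $\Sigma_w$. Since $\E[\Vert w\Vert^2]=\trace(\Sigma_w)$, I would invoke the Laurent--Massart tail bound for weighted chi-squared sums, which gives for every $x>0$
\[
\bbP\Big(\Vert w\Vert^2\ge\trace(\Sigma_w)+2\sqrt{\big(\textstyle\sum_i\lambda_i^2\big)x}+2\lambda_{\max}x\Big)\le e^{-x}.
\]
Taking $x=\log(1/\delta)$ and using $\sum_i\lambda_i^2\le(\sum_i\lambda_i)^2=\trace(\Sigma_w)^2$ together with $\lambda_{\max}\le\trace(\Sigma_w)$, the threshold is at most $\trace(\Sigma_w)\big(1+2\sqrt{\log(1/\delta)}+2\log(1/\delta)\big)$. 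The hypothesis $\delta<1/e$ forces $\log(1/\delta)\ge1$, whence $1+2\sqrt{\log(1/\delta)}+2\log(1/\delta)\le5\log(1/\delta)$, and taking square roots yields part (a).

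For part (b), I would first restate part (a) as a pure tail bound: writing $T=\trace(\Sigma_w)$, the substitution $t=5T\log(1/\delta)$ shows that $\bbP(\Vert w\Vert^2>t)\le e^{-t/(5T)}$ for all $t\ge5T$. Using that $\mathbb{1}\{\calE\}\Vert w\Vert^2$ is nonnegative, I would then apply the identity $\E[\mathbb{1}\{\calE\}\Vert w\Vert^2]=\int_0^\infty\bbP(\calE\cap\{\Vert w\Vert^2>t\})\,dt$ and bound the integrand two ways: $\bbP(\calE\cap\{\Vert w\Vert^2>t\})\le\bbP(\calE)\le\delta'$ always, and $\bbP(\calE\cap\{\Vert w\Vert^2>t\})\le\bbP(\Vert w\Vert^2>t)\le e^{-t/(5T)}$ for $t\ge5T$. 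Splitting the integral at $t=5T$ then contributes $5T\delta'$ from $[0,5T]$ and leaves $\int_{5T}^\infty\min(\delta',e^{-t/(5T)})\,dt$ to be evaluated.

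The final step is a short case analysis of this tail integral. If $\delta'\le1/e$, the crossover $e^{-t/(5T)}=\delta'$ occurs at $t^\star=5T\log(1/\delta')\ge5T$; integrating $\delta'$ over $[5T,t^\star]$ and $e^{-t/(5T)}$ over $[t^\star,\infty)$ gives $5T\delta'\log(1/\delta')$, so the total is $5T\delta'\big(\log(1/\delta')+1\big)\le5T\delta'\log(9/\delta')$, using $1\le\log9$. If $\delta'>1/e$, then $e^{-t/(5T)}<\delta'$ throughout $[5T,\infty)$, the tail integral equals $5Te^{-1}$, and the total $5T(\delta'+e^{-1})$ is seen to be $\le5T\delta'\log(9/\delta')$ by a one-variable monotonicity check on $(1/e,1)$.

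I expect the main obstacle to be part (b), specifically pinning down the universal constant: the argument must be organized so that the two-term bound $\log(1/\delta')+1$ in the small-$\delta'$ regime is absorbed into $\log(9/\delta')$, and so that the $\delta'>1/e$ regime does not inflate the constant — which is exactly why the threshold is taken at $5T$ and why the constant $9$ (rather than something smaller) appears. Part (a) is routine once Laurent--Massart is invoked; the only real care there is to use $\delta<1/e$, so that the square-root term folds into the linear term and the constant is \emph{exactly} $5$.
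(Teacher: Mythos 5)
Your proof is correct, and it is considerably more self-contained than the paper's, which disposes of the lemma entirely by citation: part (a) is attributed to Lemma~14 of \cite{cassel2021online} and part (b) is left as ``similar arguments to the proof of Lemma~35 of \cite{cassel2020logarithmic}.'' Your route --- diagonalizing $\Sigma_w$ to write $\Vert w\Vert^2=\sum_i\lambda_i\xi_i^2$, invoking Laurent--Massart, and then crushing $\trace(\Sigma_w)\bigl(1+2\sqrt{x}+2x\bigr)$ into $5\trace(\Sigma_w)x$ using $x=\log(1/\delta)\ge1$ --- is almost certainly what the cited lemma does under the hood, and your observation that the constant $5$ is exactly tight at $\delta=1/e$ is a nice sanity check. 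For part (b), the layer-cake decomposition with the split at $t=5T$ and the two-regime analysis of $\int_{5T}^{\infty}\min(\delta',e^{-t/(5T)})\,dt$ is clean and all the arithmetic checks out: in the regime $\delta'\le1/e$ the two pieces recombine to $5T\delta'\log(1/\delta')$ exactly, and $1+\log(1/\delta')\le\log(9/\delta')$ since $\log 9>1$; in the regime $\delta'>1/e$ the function $h(\delta')=\delta'\log(9/\delta')-\delta'-1/e$ has $h'(\delta')=\log(9/\delta')-2>0$ on $(1/e,1)$, so checking positivity at $\delta'=1/e$ suffices, as you indicate. The only thing I would ask you to make explicit in a final write-up is the intermediate tail bound $\bbP(\Vert w\Vert^2\ge t)\le e^{-t/(5T)}$ for $t\ge 5T$, which follows from monotonicity of the event in $t$ together with the part-(a) bound at $x=t/(5T)\ge1$; you state it but a referee would want the one-line justification. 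What your approach buys is a verifiable proof with explicit constants in place of a chain of citations; what it costs is nothing, since the argument is short.
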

\begin{proof}
Part~(a) is \cite[Lemma~14]{cassel2021online}. Using similar arguments to those in the proof of \cite[Lemma~35]{cassel2020logarithmic}, one can prove part~(b).
\end{proof}

\begin{lemma}\label{lemma:azuma}\textbf{(Vector Azuma Inequality)} 
Given a real Euclidean space $E$, let $\Vert X_k\Vert_{k\ge1}$ be a martingale difference sequence adapted to a filtration $\{\calF_k\}_{k\ge0}$, i.e., $X_k\in E$ is $\calF_k$-measurable and $\E[X_k|\calF_{k-1}]=0$ for all $k\ge1$. Suppose $\Vert X_k\Vert\le b$ for all $k\ge0$. Then, for any $s\ge1$ and any $0<\delta<\frac{1}{2}e^{-2}$,
\begin{equation*}
\big\Vert\sum_{k=1}^sX_k\big\Vert\le b\sqrt{2s\log\frac{15}{\delta}}.
\end{equation*}
\end{lemma}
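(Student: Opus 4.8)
The statement should be read as a high-probability bound: with probability at least $1-\delta$ one has $\Vert S_s\Vert\le b\sqrt{2s\log(15/\delta)}$, where $S_k\triangleq\sum_{j=1}^k X_j$ and $S_0=0$. My plan is to reduce everything to a single \emph{dimension-free} exponential tail bound of the form
\[
\bbP(\Vert S_s\Vert\ge t)\le 2\exp\Big(-\frac{t^2}{2sb^2}\Big),\qquad t\ge 0,
\]
and then choose the threshold $t=b\sqrt{2s\log(15/\delta)}$, for which the exponent equals $-\log(15/\delta)$ and the right-hand side becomes $2\delta/15<\delta$. The slack $2/15$ (and, if a cruder constant is used, the restriction $\delta<\tfrac12 e^{-2}$) comfortably absorbs the leading constant, so the complementary event has probability at most $\delta$, which is the claim. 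A dimension-free bound is essential here because $E$ is an arbitrary (possibly infinite-dimensional) Euclidean space, so an $\epsilon$-net-over-directions argument, which would introduce a dimension factor, is not allowed.

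The core is the tail bound, which I would obtain by the exponential-supermartingale (Chernoff) method adapted to vectors. The engine is the conditional inequality
\[
\E\!\left[\cosh(\lambda\Vert S_k\Vert)\,\middle|\,\calF_{k-1}\right]\le e^{\lambda^2 b^2/2}\,\cosh(\lambda\Vert S_{k-1}\Vert),\qquad \lambda\in\bbR,
\]
valid whenever $\E[X_k\mid\calF_{k-1}]=0$ and $\Vert X_k\Vert\le b$. Granting this, $M_k\triangleq e^{-k\lambda^2 b^2/2}\cosh(\lambda\Vert S_k\Vert)$ is a nonnegative supermartingale with $M_0=1$, so $\E[\cosh(\lambda\Vert S_s\Vert)]\le e^{s\lambda^2 b^2/2}$. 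Since $\cosh$ is increasing on $[0,\infty)$ and $\cosh(x)\ge\tfrac12 e^{x}$, Markov's inequality gives $\bbP(\Vert S_s\Vert\ge t)\le 2\,e^{s\lambda^2 b^2/2-\lambda t}$ for every $\lambda>0$, and optimizing at $\lambda=t/(sb^2)$ yields the displayed tail bound. Equivalently, one may simply invoke the dimension-free Hilbert-space martingale inequalities of Pinelis (1994) or Hayes (2005), which deliver precisely a bound of this type with an absolute constant no larger than $15$.

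The main obstacle is the conditional $\cosh$ inequality, and this is exactly the step where the inner-product (Euclidean) structure of $E$ is used: because the norm is nonlinear, one cannot reduce to a fixed coordinate and apply the scalar Hoeffding lemma directly. I would prove it by expanding $\cosh(\lambda\Vert v\Vert)=\sum_{n\ge0}\lambda^{2n}\Vert v\Vert^{2n}/(2n)!$ and using $\Vert S_{k-1}+X_k\Vert^2=\Vert S_{k-1}\Vert^2+2\langle S_{k-1},X_k\rangle+\Vert X_k\Vert^2$: the conditional-mean-zero property annihilates the first-order term, the inner-product structure makes the second-order contribution match $e^{\lambda^2 b^2/2}\cosh(\lambda\Vert S_{k-1}\Vert)$ to leading order, and the remaining terms are controlled using $\Vert X_k\Vert\le b$ together with $(2n)!\ge 2^n n!$ (the case $S_{k-1}=0$ being the elementary inequality $\cosh x\le e^{x^2/2}$). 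In practice I would either reduce to the extremal two-point increments and verify the resulting scalar inequality $\cosh(\lambda\sqrt{r^2+b^2})\le e^{\lambda^2 b^2/2}\cosh(\lambda r)$ termwise, or cite the references above to avoid the bookkeeping.

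Finally, combining the pieces: with the tail bound in hand, substituting $t=b\sqrt{2s\log(15/\delta)}$ and using $2\delta/15\le\delta$ completes the argument, the hypothesis $\delta<\tfrac12 e^{-2}$ guaranteeing that $t$ is large enough that any cruder leading constant (for instance the $2e^2$ and the additive $(t-b)$ correction appearing in the Hayes form) is likewise dominated by the stated $15$.
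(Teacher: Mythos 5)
Your proposal is correct in substance but follows a genuinely different route from the paper: the paper's entire proof is a one-line citation of Hayes (2005), Theorem~1.8, whereas you reconstruct the inequality from scratch via the Pinelis-style exponential supermartingale $M_k = e^{-k\lambda^2b^2/2}\cosh(\lambda\Vert S_k\Vert)$, Markov's inequality, and optimization in $\lambda$. (You also correctly read the statement as a high-probability bound; the "with probability at least $1-\delta$" is missing from the lemma as printed.) The crux of your argument is the conditional inequality $\E[\cosh(\lambda\Vert S_k\Vert)\mid\calF_{k-1}]\le e^{\lambda^2b^2/2}\cosh(\lambda\Vert S_{k-1}\Vert)$; the reduction of the general Hilbert-space increment to the extremal scalar inequality $\cosh(\lambda\sqrt{r^2+b^2})\le e^{\lambda^2b^2/2}\cosh(\lambda r)$ is the one step you do not fully carry out (the mean-zero condition kills the cross term only inside a nonlinear function, so some convexity/extremality bookkeeping is genuinely needed there), but you identify it precisely and your fallback citation to Pinelis (1994)/Hayes (2005) covers exactly this gap, so the logic is sound. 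What your route buys: the clean dimension-free tail bound $\Pr(\Vert S_s\Vert\ge t)\le 2\exp(-t^2/(2sb^2))$ makes the final substitution give $2\delta/15<\delta$ immediately, with no constraint on $\delta$ and with the constant $15$ far from tight. By contrast, deriving the lemma from Hayes' stated form $2e\,\exp(-(a/b-1)^2/(2s))$ requires absorbing both the prefactor and the $-b$ shift, which for small $s$ only works because the bound $b\sqrt{2s\log(15/\delta)}\ge sb$ is then deterministically true (this is where $\delta<\tfrac12e^{-2}$ and the constant $15$ actually earn their keep); your closing sentence gestures at this but does not spell out the small-$s$/large-$s$ case split that the citation-based proof implicitly relies on.
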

\begin{proof}
This lemma is a direct consequence of \cite[Theorem~1.8]{hayes2005large}.
\end{proof}

\end{document}